\title{Invariance principle via orthomartingale approximation}
\keywords{Random fields, invariance principle, orthomartingales, projective 
conditions, Maxwell and Woodroofe condition.}
\subjclass[2010]{60F05; 60F17; 60G10; 60G48; 60G60.}
\date{\today}
\author{Davide Giraudo}
\address{Normandie Universit\'e, Universit\'e de Rouen, 
Laboratoire de Math\'ematiques Rapha\"el Salem,
CNRS, UMR 6085, Avenue de l'universit\'e, BP 12, 
76801 Saint-Etienne du Rouvray Cedex, 
France.}
\email{davide.giraudo1@univ-rouen.fr}
\numberwithin{equation}{subsection}
\renewcommand{\leq}{\leqslant}
\renewcommand{\geq}{\geqslant}
\newtheorem{Theorem}{Theorem}[section]
\newtheorem{Th\'eor\`eme}{Th\'eor\`eme}[section]
\newtheorem{Proposition}[Theorem]{Proposition}
\newtheorem{Lemma}[Theorem]{Lemma}
\newtheorem{Definition}[Theorem]{Definition}
\newtheorem{D\'efinition}[Th\'eor\`eme]{D\'efinition}
\newtheorem{Corollary}[Theorem]{Corollary}
\theoremstyle{remark}
\newtheorem{Remark}[Theorem]{Remark}
\tikzstyle{Vertex}=[circle,draw=LimeGreen!80,fill=LimeGreen!8,
\tikzstyle{Node}=[Vertex,draw=RoyalBlue!80,fill=RoyalBlue!8,inner sep=1.5pt]
\tikzstyle{Leaf}=[rectangle,draw=Black!70,fill=Black!16,
\tikzstyle{Edge}=[Maroon!80,cap=round,line width=1pt]
\tikzstyle{Mark1}=[draw=BrickRed!80,fill=BrickRed!8]
\tikzstyle{Mark2}=[draw=BurntOrange!80,fill=BurntOrange!8]
\tikzstyle{EdgeRew}=[->,RedOrange!80,cap=round,thick]
\newcommand{\h}{\mathcal H}
\newcommand \ens[1]{\left\{ #1\right\}}
\newcommand \R{\mathbb R}
\newcommand \N{\mathbb N}
\newcommand{\gr}[1]{\mathbf{#1}}
\newcommand{\imd}{\preccurlyeq}
\newcommand{\smd}{\succcurlyeq}
\newcommand{\E}[1]{\mathbb E\left[#1\right]}
\newcommand{\f}{\mathcal F}
\newcommand \Z{\mathbb Z}
\newcommand \abs[1]{\left|#1\right|}
\newcommand \eps{\varepsilon}
\newcommand{\Id}{\operatorname{I}}
\newcommand{\norm}[1]{\left\lVert #1 \right\rVert}
\newcommand{\pr}[1]{\left( #1\right)}
\newcommand{\til}[1]{\widetilde{#1}}
\begin{document}

\begin{abstract}
 We obtain a necessary and sufficient condition for the 
 orthomartingale-coboundary decomposition.
 We establish a sufficient condition for the approximation of 
 the partial sums of a strictly stationary random fields by those 
 of stationary orthomartingale differences. This condition can 
 be checked under multidimensional analogues of the 
 Hannan condition and the Maxwell-Woodroofe condition.
\end{abstract}

\maketitle 

\section{Introduction and notations}

In all the paper, we shall use the following notations. 
Let $\pr{\Omega,\f,\mu}$ be a probability space.
\begin{itemize}
 \item For a function $f\colon\Omega\to\R$, $\norm{f}$ will 
 denote the $\mathbb L^2$-norm of $f$. The subspace of centered 
 square integrable functions is denoted as $\mathbb L^2_0$.
 \item If $d$ is a positive integer, we denote by 
 $[d]$ the set $\ens{1,\dots,d}$.
 \item If $\gr{n}=\pr{n_1,\dots,,n_d}$ is an element of $\N^d$, we denote 
 by $\min\gr{n}$ the quantiy $\min_{1\leq q\leq d}n_q$ and $
 \abs{\gr{n}}:=\prod_{q=1}^dn_q$. Moreover, we shall write 
 $\gr{2^n}=\sum_{q=1}^d2^{n_q}\gr{e_q}$.
 \item If $q\in [d]$, then $\gr{e_q}$ is the element of $\N^d$ 
 such that the $q$th coordinate is equal to $1$, and all the others to $0$.
 \item We denote for an 
element $\gr{i}$ of $\Z^d$ and a non-empty subset $J$ of $[d]$ the multiindex 
$\gr{i_J}\in \Z^d$ defined as $\sum_{q\in J}i_q\gr{e_q}$.
\item Let $\pr{a_{\mathbf n}}_{\mathbf n\in \Z^d}$ be a family of real numbers. We 
 define 
 \begin{equation}\label{eq:definition_of_limsup}
  \limsup_{\mathbf n\to +\infty}a_{\mathbf n}:=
  \lim_{i\to +\infty}\sup_{\mathbf n:\min \mathbf n\geqslant i}a_{\mathbf n}.
   \end{equation}
\item We denote by $\imd$ the coordinatewise order, that is, for any 
$\gr{i}=\pr{i_q}_{q=1 }^d \in\Z^d$ and $\gr{j}=\pr{j_q}_{q=1 }^d \in\Z^d$, 
$\gr{i}\imd\gr{j}$ if and only if $i_q\leqslant j_q$ for any 
$q\in [d ]$.

\item Let $T_q$, $q\in [d]$ be bijective, bi-measurable and  measure preserving maps from $\Omega$ 
to itself which are pairwise commuting. For  $\gr{i}\in \Z^d$, we denote by $T^{\gr{i}}$ 
the map $T_1^{i_1}\circ\dots\circ T_d^{i_d}$, $U^{\gr{i}}\colon \mathbb L^1\to
\mathbb L^1$ the operator defined by $\pr{U^{\gr{i}}f}\pr{\omega}=
f\pr{T^{\gr{i}}\omega}$ and 
\begin{equation}\label{eq:definition_des_sommes_partielles}
 S_{\gr{n}}\pr{f}=\sum_{\gr{0}
\imd\gr{i}\imd\gr{n}-\gr{1}}U^{\gr{i}}\pr{f}=\sum_{\gr{0}
\imd\gr{i}\imd\gr{n}-\gr{1}} f\circ T^{\gr{i}}.
\end{equation}

 We also use the notation
$U_q:=U^{\gr{e_q}}$.
\item We shall write as a product the composition of operators $U_q$ 
and we use the convention $\prod_{q\in \emptyset}U_q=\Id$.
 \item If $I$ is a subset of $[d]$, then $\eps\pr{I}$ is the element of 
 $\Z^d$ whose $q$th coordinate is $-1$ if $q$ belong to $I$ and $1$ 
 otherwise. 
 \item The product, sum and minimum of two elements of $\Z^d$ is 
 understood to be coordinatewise.
 \item Let $\pr{\f_{\gr{i}}}_{\gr{i}\in\Z^d}$ denote a filtration. For 
 $J\subset [d]$, we denote by $\f_{\infty\gr{1_J}}$ the $\sigma$-algebra generated 
 by $\bigcup_{\gr{j}\in \Z^d,\gr{j_{ [d]\setminus  J}}\imd \gr{0}    }\f_{\gr{j}}$.
\end{itemize}

  \subsection{The invariance principle}
  
For $\mathbf i\succcurlyeq \mathbf 1$, we denote the unit cube with upper corner at 
$\mathbf i=\pr{i_1,\dots,i_d}$ that is,
\begin{equation}
 R_{\mathbf i}:=\prod_{q=1}^d\left(i_q-1,i_q\right].
\end{equation}

For a measurable function $f\colon \Omega\to \R$, we consider the partial 
sum process defined by 
\begin{equation}\label{eq:premiere_definition_processus_sommes_partielles_dim_d}
 S_{\mathbf n}(f,\mathbf t):=\sum_{\mathbf i\in \left[\mathbf 1,\mathbf n\right]}
 \lambda\left([\mathbf 0,\mathbf n\cdot \mathbf t]\cap R_{\mathbf i}\right)U^{\mathbf i}f, 
 \quad \mathbf t\in [0,1]^d,\mathbf n\in (\N^*)^d,
\end{equation}
where $\lambda$ denotes the Lebesgue measure on $\R^d$,
$[\mathbf 0,\mathbf n\cdot \mathbf t]=\prod_{q=1}^d[0,n_qt_q]$ and 
\begin{equation}
 \left[\mathbf 1,\mathbf n\right]=\ens{\mathbf i\in \Z^d,
1\leqslant i_q\leqslant n_q\mbox{ for each }q\in \ens{1,\dots,d}}.
\end{equation}
We are interested in the functional central limit theorem in 
$C\pr{[0,1]^d}$ for the net $\left( S_{\mathbf n}\pr{f,\cdot}\right)_{\mathbf n\in (\N^*)^d}$
in order to understand the asymptotic behavior of the partial 
sums of $(f\circ T^{\mathbf i})$ over rectangles. By "functional central 
limit theorem in $C\pr{[0,1]^d}$", we mean that for each continuous 
bounded functional $F\colon \pr{C\pr{[0,1]^d},\norm{\cdot}_\infty}
\to \R$, the convergence $F\left(S_{\mathbf n}(f,\cdot)/
a_{\mathbf n}\right)\to F\left(W\right)$ holds as $\min 
\mathbf n$ goes to infinity, where $W$ is a Gaussian process (or a mixture 
of a Gaussian process).
Usually, the normalizing term $a_{\mathbf n}$ will be chosen 
as $\abs{\mathbf n}:=\prod_{q=1}^dn_q$.

The question of the functional central limit theorem in the space of 
continuous functions (endowed with the uniform norm) for strictly stationary 
random fields has been studied. Wichura \cite{MR0246359} established 
such a result for an i.i.d. centered random field with finite variance, 
which generalized Donsker's one dimensional result \cite{MR0040613}.
Wichura's result was extended to a class of stationary ergodic martingale 
differences random fields \cite{MR542479,MR1629903}, and Dedecker 
found a projective condition \cite{MR1875665}. Wang and Woodroofe
\cite{MR3222815} attempted to extend the Maxwell and Woodroofe condition 
\cite{MR1782272} but found a weaker condition, which was improved 
by Voln\'y and Wang \cite{MR3264437}. The latter is a multidimensional 
extension of Hannan's condition \cite{MR0331683}.
In the context of the mentioned works, the limiting process is a standard Brownian sheet 
when the considered random field is ergodic, that is, a Gaussian process 
$\left(W_{\mathbf t}\right)_{\mathbf t\in [0,1]^d}$ such that 
$\operatorname{Cov}\left(W_{\mathbf t};W_{\mathbf s}\right)=
\prod_{i=1}^d\min\ens{t_i,s_i}$.

\subsection{Orthomartingales}

Let $\pr{T_q}_{q=1}^d$ be bijective, bi-measurable and measure preserving 
transformations on $\pr{\Omega,\mathcal F,\mu}$. Assume that $T_q\circ T_{q'}=
T_{q'}\circ T_q$ for each $q,q'\in \ens{1,\dots,d}$. Let $\f_{\gr{0}}$ be a 
sub-$\sigma$-algebra of $\mathcal F$ such that for each $q\in \ens{1,\dots,d}$, 
$\f_{\gr{0}}\subset T_q^{-1}\f_{\gr{0}}$. 
In this way, 
$\mathcal F_{\mathbf i}:=T^{-\mathbf i}\f_{\gr{0}}$, $\mathbf i\in \Z^d$, yields a filtration. If 
for each $\mathbf k,\mathbf l\in \Z^d$ and each integrable and
$\mathcal F_{\mathbf l}$-mesurable random 
variable $Y$, 
 \begin{equation}
  \mathbb E\left[Y\mid \mathcal F_{\mathbf k}\right]=
  \mathbb E\left[Y\mid \mathcal F_{\mathbf k\wedge \mathbf l}\right] \mbox{ almost surely},
 \end{equation}
the transformations $\pr{T_q}_{q=1}^d$ are said to be \textit{completely 
commuting}.

Recall that $\mathbf i\preccurlyeq \mathbf j$ means that $i_q\leqslant j_q$ for
  each $q\in \ens{1,\dots,d}$.
The collection of random variables $\ens{M_{\mathbf n},\mathbf n\in \N^d}$ is 
said to be an orthomartingale random field with respect to the completely
commuting filtration 
$\left(T^{-\mathbf i}\f_{\gr{0}}\right)_{\mathbf i\in \Z^d}$ if 
for each $\mathbf n\in \N^d$, $M_{\mathbf n}$ is 
$\mathcal F_{\mathbf n}$-measurable, integrable 
and for each  $\mathbf i,\mathbf j\in \Z_+^d$ such that  $\mathbf i\preccurlyeq \mathbf j$, 
  \begin{equation}
   \mathbb E\left[M_{\mathbf j}\mid\mathcal F_{\mathbf i}\right]=M_{\mathbf i}.
  \end{equation}

\begin{Definition}
 Let $m\colon\Omega\to \R$ be a measurable function.  
 The random field $\left(m\circ T^{\mathbf i}\right)_{\mathbf i\in \Z^d}$ is an 
 orthomartingale difference random field with respect to the completely commuting 
 filtration $\pr{T^{-\mathbf i}\f_{\gr{0}}}_{\gr{i}\in \Z^d}$ if the random field 
 $\left(M_{\mathbf n}\right)_{\mathbf n\in \N^d}$ defined by 
 $M_{\mathbf n}:=\sum_{\mathbf i\in [\mathbf 0, \mathbf n-\mathbf 1]}
 m\circ T^{\mathbf i}$ is an orthomartingale random field.
\end{Definition}

  \begin{Proposition}\label{prop:Doob}
   Let $\pr{m\circ T^{\mathbf i}}_{\mathbf i\in \Z^d}$ be an orthomartingale differences 
   random field with respect to the completely commuting filtration 
   $\pr{T^{-\mathbf i}\f_{\gr{0}}}_{\mathbf i\in \Z^d}$. Then for each 
   $\mathbf n\in \N^d$ such that $\mathbf n\succcurlyeq \mathbf 1$, the following inequality 
   holds:
   \begin{equation}\label{eq:Doob}
    \norm{\frac 1{\abs{\gr{n}}^{1/2}} 
    \max_{\mathbf 1\preccurlyeq \mathbf i\preccurlyeq\mathbf n}
    \abs{S_{\mathbf i}\pr{m}}
    }\leqslant 2^d\norm{m}.
   \end{equation}

  \end{Proposition}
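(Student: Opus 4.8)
The inequality \eqref{eq:Doob} is a Cairoli-type maximal inequality, and the plan is to reduce it to the classical one-parameter Doob $\mathbb L^2$ maximal inequality, applied successively once in each of the $d$ coordinate directions; since the Doob constant for $p=2$ equals $p/(p-1)=2$, carrying out $d$ such steps produces the factor $2^d$. I would first reduce \eqref{eq:Doob} to the two statements $\norm{M_{\gr n}}=\abs{\gr n}^{1/2}\norm m$, where $M_{\gr n}:=S_{\gr n}\pr m$, and $\norm{\max_{\gr 1\preccurlyeq \gr i\preccurlyeq \gr n}\abs{M_{\gr i}}}\leq 2^d\norm{M_{\gr n}}$, whose combination is exactly \eqref{eq:Doob}.

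For the norm identity I would use that each $T^{\gr i}$ is measure preserving, so that $\norm{m\circ T^{\gr i}}=\norm m$, together with the orthogonality in $\mathbb L^2$ of the family $\pr{m\circ T^{\gr i}}_{\gr i}$. To see the latter, given $\gr i\neq \gr j$ I would pick a coordinate $q$ with, say, $i_q<j_q$. The orthomartingale relation $\E{M_{\gr 1}\mid \f_{\gr 1-\gr e_q}}=M_{\gr 1-\gr e_q}$, in which the right-hand side vanishes because the index $\gr 1-\gr e_q$ has a zero coordinate, combined with stationarity and complete commutation, gives $\E{m\circ T^{\gr j}\mid \Gca}=0$, where $\Gca$ is generated by the $\f_{\gr k}$ with $k_q\leq j_q$. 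Since $m\circ T^{\gr i}$ is $\f_{\gr i+\gr 1}$-measurable and $\pr{\gr i+\gr 1}_q=i_q+1\leq j_q$, it is $\Gca$-measurable, and conditioning yields $\E{\pr{m\circ T^{\gr i}}\pr{m\circ T^{\gr j}}}=0$. Summing the $\abs{\gr n}$ diagonal terms then gives $\norm{M_{\gr n}}^2=\abs{\gr n}\norm m^2$.

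For the maximal inequality I would introduce the partial maximal functions $M^{(0)}_{\gr i}:=\abs{M_{\gr i}}$ and, for $q=1,\dots,d$, $M^{(q)}_{\gr i}:=\max_{1\leq k\leq i_q}M^{(q-1)}_{\gr i+\pr{k-i_q}\gr e_q}$, so that $M^{(q)}_{\gr i}$ equals the maximum of $\abs{M_{\cdot}}$ over the sub-box obtained by keeping coordinates $q+1,\dots,d$ equal to those of $\gr i$ and letting coordinates $1,\dots,q$ range from $1$ to the corresponding coordinate of $\gr i$; in particular $M^{(d)}_{\gr n}=\max_{\gr 1\preccurlyeq \gr i\preccurlyeq \gr n}\abs{M_{\gr i}}$. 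It then suffices to prove $\norm{M^{(q)}_{\gr n}}\leq 2\norm{M^{(q-1)}_{\gr n}}$ for each $q$, which chains to the desired bound. Fixing $q$ and freezing every coordinate $p\neq q$ at $n_p$, the function $k\mapsto M^{(q-1)}_{\pr{n_1,\dots,n_{q-1},k,n_{q+1},\dots,n_d}}$ is the maximum, over the first $q-1$ running indices, of the nonnegative variables $\abs{M_{\gr j}}$ whose $q$th coordinate equals $k$. I would argue that each such variable is the modulus of a martingale in $k$ with respect to the single filtration $\Gca_k:=\f_{\pr{n_1,\dots,n_{q-1},k,n_{q+1},\dots,n_d}}$, hence a submartingale; a finite maximum of submartingales being again a submartingale, Doob's $\mathbb L^2$ maximal inequality applied to this submartingale gives $\norm{M^{(q)}_{\gr n}}\leq 2\norm{M^{(q-1)}_{\gr n}}$.

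The step that requires the most care, and the only place where the hypothesis on $\pr{T_q}_{q=1}^d$ is genuinely used, is this submartingale claim. The orthomartingale property gives directly $\E{M_{\gr j+\gr e_q}\mid \f_{\gr j}}=M_{\gr j}$, a martingale identity with respect to $\f_{\gr j}$; but the filtration $\pr{\Gca_k}$ conditions on the strictly larger $\sigma$-algebra in which the coordinates $p\neq q$ are set to $n_p$ rather than to the smaller running indices of the maximum. Promoting the conditional expectation from $\f_{\gr j}$ to $\Gca_k$ is precisely what complete commutation provides: for a variable measurable with respect to $\f_{\gr b}$, conditioning on $\f_{\gr a}$ agrees with conditioning on $\f_{\gr a\wedge \gr b}$, and here $\gr a\wedge \gr b$ collapses the frozen coordinates back to the running ones, restoring the martingale identity. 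Without complete commutation the individual terms of the maximum would be submartingales for mutually different filtrations, the maximum would no longer be a submartingale, and the iteration of Doob's inequality would break down; this is why I expect the commutation hypothesis to be the crux of the argument.
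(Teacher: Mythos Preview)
Your argument is correct and is the standard Cairoli-type proof: iterate Doob's $\mathbb L^2$ maximal inequality once per coordinate, using complete commutation to upgrade the conditioning $\sigma$-algebra so that all the terms inside the running maximum become submartingales with respect to a \emph{common} one-parameter filtration; together with the orthogonality of the increments this yields exactly the factor $2^d$. In particular your identification of where complete commutation enters---promoting $\E{\,\cdot\mid\f_{\gr j}}$ to $\E{\,\cdot\mid\Gca_k}$---is the right one.

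There is nothing to compare against here: the paper states Proposition~\ref{prop:Doob} without proof, treating it as a known inequality, and immediately remarks that Lemma~3.1 of \cite{MR3264437} (Voln\'y--Wang) gives the stronger uniform-integrability statement recorded as Proposition~\ref{prop:uniform_integrability}. Your write-up supplies precisely the argument that is implicit in that reference.
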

  This shows that the the family of normalized maxima of partial sums is
  bounded in $\mathbb L^2$. Lemma~3.1 in \cite{MR3264437} shows more.
  
  \begin{Proposition}\label{prop:uniform_integrability}
   Let $\pr{m\circ T^{\mathbf i}}_{\mathbf i\in \Z^d}$ be an orthomartingale differences 
   random field with respect to the completely commuting filtration 
   $\pr{T^{-\mathbf i}\f_{\gr{0}}}_{\mathbf i\in \Z^d}$. Then the family 
   \begin{equation}
    \ens{\frac 1{\abs{\mathbf n}} \max_{\mathbf 1\preccurlyeq \mathbf i 
    \preccurlyeq\mathbf n}\abs{S_{\mathbf i}\pr{m}}^2 , \mathbf n\in \pr{\N^*}^d
    } 
   \end{equation}
  is uniformly integrable.
  \end{Proposition}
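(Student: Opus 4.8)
The plan is to obtain uniform integrability by a truncation argument, writing $m$ as the sum of a bounded orthomartingale difference and an orthomartingale difference of small $\mathbb L^2$-norm. First, the family is bounded in $\mathbb L^1$: Proposition~\ref{prop:Doob} gives
\[
\sup_{\mathbf n\succcurlyeq\mathbf 1}\E{\frac 1{\abs{\mathbf n}}\max_{\mathbf 1\preccurlyeq\mathbf i\preccurlyeq\mathbf n}\abs{S_{\mathbf i}\pr{m}}^2}=\sup_{\mathbf n\succcurlyeq\mathbf 1}\norm{\frac 1{\abs{\mathbf n}^{1/2}}\max_{\mathbf 1\preccurlyeq\mathbf i\preccurlyeq\mathbf n}\abs{S_{\mathbf i}\pr{m}}}^2\leq 4^d\norm{m}^2<\infty.
\]
Writing $Y_{\mathbf n}:=\abs{\mathbf n}^{-1}\max_{\mathbf 1\preccurlyeq\mathbf i\preccurlyeq\mathbf n}\abs{S_{\mathbf i}\pr{m}}^2$, it then suffices, by the standard criterion, to show that $\sup_{\mathbf n\succcurlyeq\mathbf 1}\E{Y_{\mathbf n}\mathbf 1_{\ens{Y_{\mathbf n}>M}}}$ can be made smaller than any prescribed $\eps>0$ by taking $M$ large.

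For each $K>0$ I would produce a decomposition $m=m'_K+m''_K$ into orthomartingale differences with $m'_K$ bounded and $\norm{m''_K}$ small, as follows. The orthomartingale differences are exactly the range of the projection $\pi:=\prod_{q=1}^d\mathcal D_q$, where $\mathcal D_q$ is the difference of two nested conditional expectations attached to the $q$-th direction; these operators commute by complete commutation, $\pi$ fixes $m$, and each $\mathcal D_q$ is simultaneously an orthogonal projection on $\mathbb L^2$ and of norm at most $2$ on $\mathbb L^\infty$. Hence $m'_K:=\pi\pr{m\mathbf 1_{\ens{\abs{m}\leq K}}}$ is a genuine orthomartingale difference with $\norm{m'_K}_\infty\leq 2^dK$, while $m''_K:=m-m'_K=\pi\pr{m\mathbf 1_{\ens{\abs{m}>K}}}$ satisfies $\norm{m''_K}\leq\norm{m\mathbf 1_{\ens{\abs{m}>K}}}\to 0$ as $K\to\infty$.

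For the remainder, Proposition~\ref{prop:Doob} yields $\sup_{\mathbf n\succcurlyeq\mathbf 1}\E{\abs{\mathbf n}^{-1}\max_{\mathbf 1\preccurlyeq\mathbf i\preccurlyeq\mathbf n}\abs{S_{\mathbf i}\pr{m''_K}}^2}\leq 4^d\norm{m''_K}^2$, which is as small as we wish. For the bounded part I would establish a bound in $\mathbb L^2$ for $\abs{\mathbf n}^{-1}\max_{\mathbf 1\preccurlyeq\mathbf i\preccurlyeq\mathbf n}\abs{S_{\mathbf i}\pr{m'_K}}^2$ uniform in $\mathbf n$, equivalently a uniform $\mathbb L^4$-bound for $\abs{\mathbf n}^{-1/2}\max_{\mathbf 1\preccurlyeq\mathbf i\preccurlyeq\mathbf n}\abs{S_{\mathbf i}\pr{m'_K}}$. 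This comes from the same coordinatewise iteration that proves Proposition~\ref{prop:Doob}, but carried out in $\mathbb L^4$: applying the one-dimensional Doob and Burkholder inequalities successively in each of the $d$ directions, and using that the quadratic variations of the bounded field $\pr{m'_K\circ T^{\mathbf i}}$ are dominated by $\abs{\mathbf n}\norm{m'_K}_\infty^2$, gives $\norm{\max_{\mathbf 1\preccurlyeq\mathbf i\preccurlyeq\mathbf n}\abs{S_{\mathbf i}\pr{m'_K}}}_4\leq C_d\abs{\mathbf n}^{1/2}\norm{m'_K}_\infty$. Thus $\abs{\mathbf n}^{-1}\max_{\mathbf 1\preccurlyeq\mathbf i\preccurlyeq\mathbf n}\abs{S_{\mathbf i}\pr{m'_K}}^2$ is bounded in $\mathbb L^2$ uniformly in $\mathbf n$, hence uniformly integrable. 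Combining the two parts through $Y_{\mathbf n}\leq 2\abs{\mathbf n}^{-1}\max_{\mathbf 1\preccurlyeq\mathbf i\preccurlyeq\mathbf n}\abs{S_{\mathbf i}\pr{m'_K}}^2+2\abs{\mathbf n}^{-1}\max_{\mathbf 1\preccurlyeq\mathbf i\preccurlyeq\mathbf n}\abs{S_{\mathbf i}\pr{m''_K}}^2$ and letting $K\to\infty$ after the usual $\eps/2$ splitting delivers the required tail bound.

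I expect the truncation to be the delicate point: one cannot simply replace $m$ by $m\mathbf 1_{\ens{\abs{m}\leq K}}$, since that destroys the orthomartingale-difference structure, so the reprojection by $\pi$ together with the fact that $\pi$ preserves boundedness is essential. By contrast, the $\mathbb L^4$ maximal inequality for the bounded part is routine once the one-dimensional Doob and Burkholder inequalities are iterated across the $d$ coordinates.
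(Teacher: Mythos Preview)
Your argument is correct, and in fact the paper does not prove this proposition at all: the sentence preceding it says that Proposition~\ref{prop:Doob} already gives boundedness of the family in $\mathbb L^1$ and that ``Lemma~3.1 in \cite{MR3264437} shows more''; the uniform integrability is simply quoted from Voln\'y and Wang.

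Your truncation strategy is the standard one and is sound. The only points that deserve to be made explicit are (i) that the projection you call $\pi$ is exactly $\pi_{\mathbf 0}=\prod_{q=1}^d\pi_0^{(q)}$ in the paper's notation, which indeed fixes every orthomartingale difference and has $\mathbb L^\infty$-operator norm at most $2^d$, and (ii) that the $\mathbb L^4$-bound for the bounded piece follows from Cairoli's maximal inequality $\norm{\max_{\mathbf 1\preccurlyeq\mathbf i\preccurlyeq\mathbf n}\abs{S_{\mathbf i}(m'_K)}}_4\leq (4/3)^d\norm{S_{\mathbf n}(m'_K)}_4$ (valid because the filtration is completely commuting) together with an iterated Burkholder estimate $\norm{S_{\mathbf n}(m'_K)}_4\leq C_d\abs{\mathbf n}^{1/2}\norm{m'_K}_4$, obtained by applying Burkholder's inequality successively in each coordinate and using stationarity. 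With these two ingredients spelled out, your proof is a self-contained substitute for the citation.
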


  \subsection{Orthomartingale approximation}
  
  There are essentially two methods for establishing the 
  invariance principle for a stationary sequence. The first one 
  is approximation by an i.i.d. sequence, which leads to good results 
  but there are processes which cannot be treated in this way.
  An other method for establishing limit theorems
  for strictly stationary sequences is a martingale approximation. 
  Since it is known that a stationary martingale difference sequence 
  satisfies the invariance principle, one can try to prove an invariance principle 
  by martingale approximation. More formally, given a square integrable 
  centered function $f\colon\Omega\to \R$, 
  one can wonder whether there exists a square integrable martingale differences sequence 
  $\pr{m\circ T^i}_{i\geq 0}$ such that $\lim_{n\to +\infty}
  n^{-1/2}\norm{\max_{1\leq i\leq n}\abs{S_i\pr{f-m}}}=0$.
  The existence of such an approximation without the $\max$ has 
  been investigated in \cite{MR0251785}. A necessary and sufficient 
  condition has been given in \cite{MR2060314,MR2462550} in the adapted case, then 
  extended to the nonadapted case in \cite{MR2283254}. The question of 
  the choice of filtration has also been considered in \cite{MR2914436}.
  
  This approach was also used for other limit theorems, like the 
  quenched weak invariance principle \cite{MR3083921,MR3178473} or
  the law of the iterated logarithms \cite{MR2370600}.

  A multidimensional analogue of the martingale approximation has not 
  been so intensively studied. There are various way to define martingales 
  random fields in dimension greater than one (cf. \cite{MR1162153,MR0254912}). 
  
  In this paper, we shall work on orthomartingale approximation, since it is 
  known \cite{MR3427925,MR3504508} that when $T_1$ is ergodic, 
  the invariance principle takes place.

  \begin{Definition}
   We say that the function $f$ admits an orthomartingale approximation if 
   there exists a square integrable function $m$ such 
   that $\pr{m\circ T^{\gr{i}}}_{\gr{i}\in \Z^d}$ is 
an orthomartingale differences random field
   \begin{equation}\label{eq:def_orthomartingale_approximation}
 \limsup_{\mathbf n\to +\infty}\frac 1{\sqrt{\abs{\mathbf n}}}
 \norm{\max_{\mathbf 1\preccurlyeq \mathbf i\preccurlyeq \mathbf n}\abs{
 S_{\mathbf i}(f-m)}}=0.
\end{equation}
  \end{Definition}
  
  The uniform norm of the function $t\mapsto S_{\gr{n}}\pr{f,t}$ 
  can be controlled by the maxima of partial sums. Moreover, 
  a stationary orthomartingale differences random field with 
  respect to a completely commuting filtration 
  such that one of the maps $T_1,\dots,T_d$ is ergodic 
  satisfies the invariance principle. Therefore, when 
  ergodicity in one direction holds, an orthomartingale approximation 
  entails the invariance principle. In the other cases, an invariance principle 
  may still hold, but the limiting process may not be a Brownian sheet 
  (see Remark 5.5 in \cite{MR3222815}).
  
  The paper is organizes as follows. Section~\ref{sec:main_results} contains 
  the main results of the 
  paper, namely, a necessary and sufficient condition for the 
  orthomartingale-coboundary decomposition, a sufficient condition 
  for the existence of an approximating orthomartingale and the 
  verification of the latter under two projective condition: Hannan 
  and Maxwell-Woodroofe. Section~\ref{sec:proofs} is devoted the 
  proofs.

\section{Main results}\label{sec:main_results}
 
\subsection{Orthomartingale-coboundary decomposition}
 
The following operators will be used in the sequel.

 \begin{Definition}\label{dfn:contractions}
   Let $T$ be a measure preserving $\Z^d$-action and let $\f_{\gr{0}}$ be 
   a $\sigma$-algebra such that $\pr{T^{-\gr{i}}\f_{\gr{0}}}_{\mathbf i
   \in \Z^d}$ is a completely commuting filtration.  
   Let $E\subsetneq [d]$ and $\gr{i}\in \N^d$. We define the 
   operators $P_{d,E}$ and $P_{d,[d]}$ by
   \begin{equation}\label{eq:definition_de_Pe}
    P_{d,E}^{\gr{i}}\pr{f}:=
    \sum_{J\subset E}\pr{-1}^{\abs{J}+\abs{E}} 
    \E{U^{\gr{i} \cdot \eps\pr{E}} f\mid \f_{\infty \gr{1_J}}}, \quad f\in 
    \mathbb L^1,
   \end{equation}
   \begin{equation}\label{eq:definition_de_Pd}
    P_{d,[d]}^{\gr{i}}\pr{f}=
    U^{-\gr{i}}f+\sum_{J\subsetneq [d]}\pr{-1}^{\abs{J}+d}
    \E{U^{-\gr{i}}f\mid \f_{\infty \gr{1_J}}}, \quad f\in 
    \mathbb L^1,
   \end{equation}
   and the closed subspaces of $\mathbb L^2$
   \begin{equation}\label{eq:definition_de_He}
    \h_{d,E}:=\ens{h\in \mathbb L^2_0\mid  
    h\mbox{ is }\f_{\infty \gr{1_E}}\mbox{-measurable and }
    \E{h\mid \f_{\infty \gr{1_{E'}}}}=0\mbox{ if }
    E'\subsetneq E}, E\subsetneq [d],
   \end{equation}
  \begin{equation}\label{eq:definition_de_Hd}
   \h_{d,[d]}=\ens{h\in \mathbb L^2\mid  \E{h\mid \f_{\infty \gr{1_{E'}}}}=0\mbox{ if }
    E'\subsetneq [d]}.
  \end{equation}

  \end{Definition}

  When the integer $d$ does not need to be specified, we shall simply
  denote $P_E^{\gr{i}}$ for $E\subset [d]$ and $\h_E$.
  
  In dimension one, we have 
  \begin{equation}
   P_{\emptyset}^i\pr{f}:=\E{U^if\mid \f_0}\mbox{ and }
   P_{\ens{1}}^i\pr{f}:=U^{-i}f-\E{U^{-i}f\mid \f_0},
  \end{equation}
  and these operators have been used in 
  \cite{MR2344817,MR3178617,1510.01459}.
  In dimension two, the operators $P_{2,E}$ are given by 
  \begin{equation}
   P_{\emptyset}^{i,j}\pr{f}=\E{U^{\pr{i,j}}f\mid \f_{0,0}},
  \end{equation}
  \begin{equation}
   P_{\ens{1}}^{i,j}\pr{f}=\E{U^{\pr{-i,j}}f\mid \f_{\infty,0}}
   -\E{U^{\pr{-i,j}}f\mid \f_{0,0}},
  \end{equation}
  \begin{equation}
   P_{\ens{2}}^{i,j}\pr{f}=\E{U^{\pr{i,-j}}f\mid \f_{0,\infty}}
   -\E{U^{\pr{i,-j}}f\mid \f_{0,0}}\mbox{ and }
  \end{equation}
  \begin{multline}
   P_{\ens{1,2}}^{i,j}\pr{f}=U^{\pr{-i,-j}}f
   -\E{U^{\pr{-i,-j}}f\mid \f_{\infty,0}}\\
   -\E{U^{\pr{-i,-j}}f\mid \f_{0,\infty}}
   +\E{U^{\pr{-i,-j}}f\mid \f_{0,0}}.
  \end{multline}
  We are now in position to state a necessary and 
  sufficient condition for the orthomartingale-coboundary 
  decomposition.
  
  \begin{Theorem}\label{thm:OMC_decomposition}
   Let $f$ be a square integrable centered function and $d\geq 1$. 
   Let $T$ be a measure preserving $\Z^d$-action and let $\f_{\gr{0}}$ be 
   a $\sigma$-algebra such that $\pr{T^{-\gr{i}}\f_{\gr{0}}}_{\mathbf i
   \in \Z^d}$ is a completely commuting filtration. The following conditions 
   are equivalent:
   \begin{enumerate}
    \item for each $E\subset [d]$, 
    \begin{equation}\label{eq:CNS_for_orthomartingale_approximation}
     \sup_{\gr{n}\in \N^d}\norm{ 
     \sum_{\gr{0}\imd \gr{i}\imd\gr{n}}P_E^{\gr{i}}f
     }<+\infty;
    \end{equation}
   \item there exists square integrable functions $m_J$, $J\subset [d]$ such that 
   \begin{equation}
    f=\sum_{J\subset [d]}\prod_{q\in  J}\pr{\Id-U_q}m_J
   \end{equation}
  and for each $J\neq [d]$, $m_J$ is $\f_{\infty   \gr{1_J}}$-measurable 
  and if $I\subsetneq J$, then $\E{m_J\mid \f_{\infty   \gr{1_I}}}=0$.
   \end{enumerate}

  \end{Theorem}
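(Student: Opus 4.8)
The plan is to recast both conditions through the single observation that, under complete commutation, the conditional expectations $\E{\,\cdot\mid\f_{\infty\gr{1_J}}}$ form a commuting family of orthogonal projections obeying $\E{\E{\,\cdot\mid\f_{\infty\gr{1_J}}}\mid\f_{\infty\gr{1_{J'}}}}=\E{\,\cdot\mid\f_{\infty\gr{1_{J\cap J'}}}}$. By Möbius inversion over the Boolean lattice of subsets of $E$, the signed sum defining $P_E^{\gr i}$ collapses to $P_E^{\gr i}f=\Pi_E\pr{U^{\gr i\cdot\eps(E)}f}$, where $\Pi_E$ is exactly the orthogonal projection of $\mathbb L^{2}$ onto $\h_E$ (the case $E=[d]$ being identical with $\E{\,\cdot\mid\f_{\infty\gr{1_{[d]}}}}=\Id$). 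Condition~(1) then reads: for every $E$, the $\h_E$--projections of the partial sums of $f$ taken with the directions of $E$ reversed stay bounded in $\mathbb L^{2}$. I would prove this factorisation first, since it trivialises the bookkeeping in both implications.

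For (2)$\Rightarrow$(1), substitute the decomposition into $\sum_{\gr 0\imd\gr i\imd\gr n}P_E^{\gr i}f$ and treat each block $\prod_{q\in J}(\Id-U_q)m_J$ separately. In a direction $q$ that is both reversed by $\eps(E)$ and carries a coboundary factor, the summand $U_q^{-i_q}(\Id-U_q)$ telescopes to $U_q^{-n_q}-U_q$, a difference of isometries of norm at most $2$. In a direction where that coboundary factor is absent, the one--dimensional partial sum is annihilated by $\Pi_E$: there the relevant generator is a martingale difference, its conditional expectation against the pertinent $\f_{\infty\gr{1_{J'}}}$ vanishing, so each shifted copy is killed by the factor $\Id-\E{\,\cdot\mid\f_{\infty\gr{1_{J'}}}}$ hidden inside $\Pi_E$. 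Matching these two effects shows that exactly one block—the one whose coboundary factors occupy the directions reversed by $\eps(E)$—survives, telescoping to an expression of $\mathbb L^{2}$--norm at most $2^{d}\norm{m_J}$ uniformly in $\gr n$, while all remaining blocks vanish by orthogonality of the spaces $\h_E$. Summing the finitely many survivors yields~\eqref{eq:CNS_for_orthomartingale_approximation}.

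For the converse (1)$\Rightarrow$(2), which I expect to be the substantive part, I would proceed by induction on $d$, the base case being the one--dimensional (non--adapted) martingale--coboundary decomposition. Fix $E$ and set $A_E^{\gr n}:=\sum_{\gr 0\imd\gr i\imd\gr n}P_E^{\gr i}f$. Since $\pr{A_E^{\gr n}}_{\gr n}$ is bounded in the Hilbert space $\mathbb L^{2}$, a Banach--Saks/weak--compactness argument produces, from Cesàro averages over $\gr n$, a weak limit $g_E$; reversing the telescoping of the previous paragraph identifies $g_E$ as the generator of the coboundary attached to $E$ and the residual martingale--difference part as the corresponding $m_E$. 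Complete commutation is what permits this extraction one coordinate at a time: peeling off the last direction reduces the $d$--dimensional bound to a one--dimensional Maxwell--Woodroofe type bound for a sequence valued in $\mathbb L^{2}\pr{\f_{\infty\gr{1_{\{d\}}}}}$, to which the inductive hypothesis applies. One then checks, via the commuting--projection relations, that each $m_J$ is $\f_{\infty\gr{1_J}}$--measurable with the prescribed vanishing conditional expectations, and that reassembling the blocks returns $f$.

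The delicate point is this converse construction: $\mathbb L^{2}$--boundedness supplies only weak compactness, so the generators $g_E$ must be manufactured by averaging rather than by a convergent series, and one must verify that the weak limits obtained along different coordinate directions are mutually compatible and land in the correct subspaces $\h_J$. Guaranteeing that the inductive peeling in one direction does not corrupt the projective structure in the others is where the complete commutation of the transformations enters essentially, and is the step I would expect to demand the most care.
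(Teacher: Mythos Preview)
Your proposal shares the central insight with the paper---extract the cobounding functions from the $\mathbb L^2$-bounded sums via weak compactness and Cesàro averaging---but the execution diverges in two ways, and one of them conceals a genuine gap.

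First, the paper does \emph{not} induct on $d$. It observes once (your factorisation is correct) that the operators $A_q:=P_E^{\gr{e_q}}$ are commuting contractions of the closed subspace $\h_E$ with $P_E^{\gr i}=A_1^{i_1}\cdots A_d^{i_d}$ there, and then applies a single abstract Browder-type lemma: if $\sup_{\gr n}\norm{\sum_{\gr 0\imd\gr i\imd\gr n}A^{\gr i}F}<\infty$, then the Cesàro averages $h_n:=n^{-d}\sum_{\gr 1\imd\gr k\imd n\gr 1}\sum_{\gr 0\imd\gr i\imd\gr k-\gr 1}A^{\gr i}F$ are bounded, any weak limit $h$ satisfies $F=\prod_{q=1}^d(\Id-A_q)h$, and $h\in\h_E$ by closedness. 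All coordinates are handled simultaneously, so the ``compatibility of weak limits across directions'' you rightly worry about never arises. Your inductive peeling is more laborious and, as you note yourself, is exactly where trouble would appear; the paper sidesteps it entirely.

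Second---and this is the gap---the weak limit $h$ solves $P_E^{\gr 0}f=\prod_{q}(\Id-P_E^{\gr{e_q}})h$, not $f=\sum_J\prod_{q\in J}(\Id-U_q)m_J$. The operators $P_E^{\gr{e_q}}$ are \emph{not} the shifts $U_q$; they are signed sums of conditional expectations composed with shifts in the $\eps(E)$ directions. Converting the $\prod_q(\Id-P_E^{\gr{e_q}})$ representation into a genuine coboundary $\prod_{q\in J}(\Id-U_q)m_J$ with $m_J\in\h_J$ requires a separate algebraic step that your sketch omits: one expands $\prod_q(\Id-P_E^{\gr{e_q}})h$, regroups via the identity $\Id-Q_q=(\Id-U_q^{-1}Q_q)+(U_q^{-1}-\Id)Q_q$ for the relevant conditional-expectation operators $Q_q$, and reads off the $m_J$. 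The paper isolates this as two dedicated lemmas (one for $E\subsetneq[d]$, one for $E=[d]$) plus an auxiliary ``simplified decomposition'' lemma; without them the argument does not reach statement~(2).
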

  
  \begin{Remark}
   In dimension one, Theorem~\ref{thm:CNS_orthomartingale_approximation} reads as follows:
   a function $f$ can be written as $f=m+g-g\circ T$, where $m$ is $\f_0$-measurable 
   and $\E{m\mid T\f_0}=0$ if and only if 
   \begin{equation}
    \sup_{n\geq 0}\norm{\E{S_n\pr{f}\mid\f_0}}<+\infty\mbox{ and }
    \sup_{n\geq 0}\norm{S_n\pr{f}-\E{S_n\pr{f}\mid T^{-n+1}\f_0}}<+\infty
   \end{equation}
  This can be viewed as a nonadapted version
  of Proposition~4.1 in \cite{MR3178617}.
  \end{Remark}
 
 \begin{Remark}
  This improves the main result in \cite{MR3522451} since 
  Theorem~\ref{thm:OMC_decomposition} does not require the 
  function $f$ to be $\f_{\gr{0}}$-measurable. Moreover, 
  even for such functions, the condition is less restrictive. Indeed, 
  in this case, 
  condition \eqref{eq:CNS_for_orthomartingale_approximation} is equivalent 
  to boundedness of the quantity $\norm{ 
  \E{S_{\gr{n}}\pr{f}\mid\f_{\gr{0}}}}$ independently of $\gr{n}\smd \gr{1}$, 
  while that of \cite{MR3522451} read $\sum_{\gr{k}\in \N^d}
  \E{U^{\gr{k}}f\mid\f_{\gr{0}}}<+\infty$.
  
  A related result has been obtained in \cite{MR2749126}, where 
  reversed martingales are obtained in the decomposition. However, 
  a kind of regularity assumption is made. This is also the case in 
  \cite{Volny2017}.
 \end{Remark}

\subsection{A sufficient condition for orthomartingale approximation}  
 
In order to express a sufficient condition for the 
orthomartingale approximation \eqref{eq:def_orthomartingale_approximation}, 
we define a blocking operator. 

\begin{Definition}
 Let $f\colon \Omega\to \R$ be a measurable function and let $k$ be an 
 integer greater or equal to $1$. The \emph{blocking operator} is 
 defined by 
 \begin{equation}\label{eq:def_blocking_operator}
  B_k(f):=\frac 1{k^d}\sum_{\mathbf 1\preccurlyeq \mathbf i 
  \preccurlyeq k\mathbf 1}\sum_{E\subset [d]}  
  \prod_{q=1}^d\pr{\Id-P_E^{i_q\gr{e_q}}}P_E^{\gr{0}}\pr{f}.
 \end{equation}
\end{Definition}

\begin{Definition}
 Let $f\colon\Omega\to \R$ be a measurable function. The \emph{plus semi-norm}, 
 denoted by $\norm{\cdot}_+$, is defined by 
 \begin{equation}\label{eq:definition_norm_plus}
  \norm{f}_+:=\limsup_{\mathbf n\to +\infty}
  \frac 1{\abs{\mathbf n}^{1/2}}\norm{
  \max_{\mathbf 1\preccurlyeq \mathbf i\preccurlyeq\mathbf n}
  \abs{S_{\mathbf i}(f)}}.
 \end{equation}

\end{Definition}

\begin{Theorem}\label{thm:CNS_orthomartingale_approximation}
Let $\pr{\Omega,\mathcal F,\mu}$ be a probability space and let 
$T\colon \Omega\to\Omega$ be a measure preserving $\Z^d$-action. 
Assume that $\f_{\gr{0}}$ is a sub-$\sigma$-algebra of $\f_{\gr{0}}$ such that 
$\pr{T^{-\mathbf i}\f_{\gr{0}}}_{\mathbf i \in \Z^d}$ is a completely 
commuting filtration. Let $f\colon \Omega\to \R$ be a measurable function.  
If 
\begin{equation}\label{eq:sufficient_cond_MA}
 \lim_{k\to +\infty}\norm{B_k(f)-f}_+=0,
\end{equation}
then there exists a function $m$ such that 
$\pr{m\circ T^{\gr{i}}}_{\gr{i}\in \Z^d}$ is an orthomartingale 
differences random field with respect to the filtration 
$\pr{T^{-\mathbf i}\f_{\gr{0}}}_{\mathbf i \in \Z^d}$ and
\begin{equation}\label{eq:def_orthomartingale_approximation_thm}
 \limsup_{\mathbf n\to +\infty}\frac 1{\sqrt{\abs{\mathbf n}}}
 \norm{\max_{\mathbf 1\preccurlyeq \mathbf i\preccurlyeq \mathbf n}\abs{
 S_{\mathbf i}\pr{f-m}}}=0.
\end{equation}
In particular, the conclusion holds if \eqref{eq:sufficient_cond_MA} is 
replaced by the following one:
\begin{equation}\label{eq:easier_sufficient_cond}
 \forall \emptyset\subsetneq J\subset [d],\forall 
 E\subset [d], \quad \lim_{k\to +\infty}\frac 1{k^{\abs{J}}}\norm{ 
 \sum_{\gr{1_J}\imd \gr{j}\imd k\gr{1_J}}
 P_E^{\gr{j}}f
 }_+=0.
\end{equation}

\end{Theorem}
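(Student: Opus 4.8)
The plan is to construct the approximating orthomartingale as a limit of the blocked fields $B_k(f)$ and then transfer the approximation from the blocks to $f$ itself via the plus seminorm. First I would verify that each $B_k(f)$ is itself an orthomartingale difference, or at least that its partial sums are close to those of an orthomartingale. The key structural observation is that the operators $\prod_{q=1}^d(\Id-P_E^{i_q\gr{e_q}})P_E^{\gr{0}}$ project onto the spaces $\h_{d,E}$ and produce, for each fixed $E$, a field that is a martingale difference in the coordinates belonging to $E$ and a coboundary-type reversed object in the complementary coordinates; averaging over the block $\gr{1}\imd\gr{i}\imd k\gr{1}$ is precisely the Cesàro-type smoothing that makes the telescoping $(\Id-U_q)$ contributions collapse. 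I would decompose $B_k(f)=\sum_{E\subset[d]}m_E^{(k)}$ according to the summation over $E$ and identify $m_{[d]}^{(k)}$ as the genuine orthomartingale part lying in $\h_{d,[d]}$, with the remaining pieces being asymptotically negligible coboundaries.

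Next, using Proposition~\ref{prop:Doob}, I would show that the orthomartingale parts $m_{[d]}^{(k)}$ form a Cauchy sequence in $\mathbb L^2$. The bound $\norm{\abs{\gr{n}}^{-1/2}\max_{\gr{1}\imd\gr{i}\imd\gr{n}}\abs{S_{\gr{i}}(m)}}\leq 2^d\norm{m}$ converts differences of $\mathbb L^2$-norms of the generating functions into uniform control of the partial-sum maxima, so that $\norm{\cdot}_+$ is dominated (up to the constant $2^d$) by the $\mathbb L^2$-norm on orthomartingale differences. Once the limit $m:=\lim_{k\to\infty}m_{[d]}^{(k)}$ exists and is checked to generate an orthomartingale difference field with respect to the given filtration (which follows because each $\h_{d,[d]}$-condition passes to the $\mathbb L^2$-limit), I would estimate $\norm{f-m}_+$ by the triangle inequality in the form
\begin{equation}
 \norm{f-m}_+\leq\norm{f-B_k(f)}_++\norm{B_k(f)-m_{[d]}^{(k)}}_++\norm{m_{[d]}^{(k)}-m}_+.
\end{equation}
The first term tends to $0$ by hypothesis~\eqref{eq:sufficient_cond_MA}, the last term is controlled by the $\mathbb L^2$-Cauchy property through Proposition~\ref{prop:Doob}, and the middle term measures exactly the negligible coboundary remainders; letting $k\to\infty$ forces $\norm{f-m}_+=0$, which is~\eqref{eq:def_orthomartingale_approximation_thm}.

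The main obstacle I expect is the careful verification that the blocking operator genuinely separates the orthomartingale component from asymptotically vanishing coboundaries, i.e.\ that $\norm{B_k(f)-m_{[d]}^{(k)}}_+\to 0$. This requires showing that for every proper subset $E\subsetneq[d]$ the averaged term $m_E^{(k)}$ contributes partial sums whose normalized maxima vanish in the $\limsup$ sense. Concretely, a factor $(\Id-U_q)$ in a direction $q\notin E$ telescopes under $S_{\gr{i}}$ and is killed after division by $\sqrt{\abs{\gr{n}}}$, while the averaging $k^{-d}\sum_{\gr{1}\imd\gr{i}\imd k\gr{1}}$ in the martingale directions yields an additional $k^{-1/2}$-type gain per direction; the bookkeeping over all $2^d$ subsets $E$ and all directions $q$ is delicate but mechanical. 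For the final statement, the implication that~\eqref{eq:easier_sufficient_cond} suffices should follow by expanding $B_k(f)-f$ into the block sums $\sum_{\gr{1_J}\imd\gr{j}\imd k\gr{1_J}}P_E^{\gr{j}}f$ indexed by nonempty $J\subset[d]$ and bounding $\norm{B_k(f)-f}_+$ by a finite sum of the quantities appearing in~\eqref{eq:easier_sufficient_cond}, so that their individual convergence to $0$ implies~\eqref{eq:sufficient_cond_MA}.
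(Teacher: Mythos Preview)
Your overall architecture matches the paper: decompose $B_k(f)$ as an orthomartingale difference $m_k$ plus a remainder $C_k(f)$, show $\norm{C_k(f)}_+=0$, prove $(m_k)$ is Cauchy in $\mathbb L^2$ via the identity $\norm{m_k-m_l}=\abs{\gr{n}}^{-1/2}\norm{S_{\gr{n}}(m_k-m_l)}$ and the hypothesis, then conclude by the triangle inequality exactly as you wrote. However, your identification of the orthomartingale component is wrong, and this is the heart of the argument. You split $B_k(f)=\sum_E m_E^{(k)}$ along the index $E$ and declare that only $E=[d]$ gives the orthomartingale part. This fails for two reasons. First, a function in $\h_{d,[d]}$ is not an orthomartingale difference: the defining conditions $\E{h\mid\f_{\infty\gr{1_{E'}}}}=0$ for $E'\subsetneq[d]$ are orthogonality conditions against the ``wrong'' $\sigma$-algebras, not the conditions $m$ is $\f_{\gr{0}}$-measurable and $\E{m\mid T_q\f_{\gr{0}}}=0$. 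Second, every summand over $E$ contributes to the orthomartingale piece, not only $E=[d]$; for instance in the adapted case $f\in\h_{d,\emptyset}$ only the term $E=\emptyset$ is nonzero, yet an approximating orthomartingale still exists.

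The paper obtains the correct decomposition via Lemmas~\ref{lem:decomposition_gE} and~\ref{lem:decomposition_gd} (themselves relying on Lemma~\ref{lem:simplified_decomposition}): for each $E$ the function $\prod_{q=1}^d(\Id-P_E^{\gr{e_q}})h$, with $h\in\h_E$, is rewritten as an orthomartingale difference plus genuine coboundaries $\sum_{\emptyset\neq J\subset[d]}\prod_{j\in J}(\Id-U_j)m_{k,J}$, where each $m_{k,J}$ is an orthomartingale difference in the directions of $J^c$. These $(\Id-U_j)$ factors do not appear in the formula for $B_k$; they arise only after this algebraic rearrangement. The negligibility $\norm{C_k(f)}_+=0$ is then an exact statement for every fixed $k$ (not a $k^{-1/2}$ gain), and its proof uses the telescoping of $\prod_{j\in J}(\Id-U_j)$ together with the uniform integrability of Proposition~\ref{prop:uniform_integrability} applied to the $J^c$-directional orthomartingale $m_{k,J}$. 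Your sketch of ``$(\Id-U_q)$ in directions $q\notin E$'' and ``$k^{-1/2}$ gain in the martingale directions'' does not match this mechanism and would not yield the required conclusion.
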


\begin{Remark}
 In dimension one, Theorem~\ref{thm:CNS_orthomartingale_approximation} reads as follows: 
 the condition 
 \begin{equation}
  \lim_{k\to +\infty}\frac 1k\limsup_{n\to+\infty}
  \frac 1{\sqrt n}\norm{\max_{1\leqslant i\leqslant n}\abs{S_i 
 \pr{\mathbb E\left[ S_k(f)\mid \f_{\gr{0}}\right] +
  \sum_{i=1}^kU^{-i}f-\E{U^{-i}f\mid \f_0}     }
  }}=0.
 \end{equation}
 is  sufficient for the existence of a function $m$ such that 
 $\pr{m\circ T^i}_{i\geqslant 0}$ is a martingale differences sequence 
 and $\limsup_{n\to+\infty}
 n^{-1/2}\norm{\max_{1\leqslant i\leqslant n}\abs{S_i\pr{f-m}}}=0$. This can 
 be viewed as a nonadapted version of Theorem~1 in \cite{MR2797997}.
\end{Remark}

\begin{Remark}
 Theorem~\ref{thm:CNS_orthomartingale_approximation} can be used even if 
 none of the maps $T_q$, $q\in [d]$ is ergodic. In this case, the function $f$ 
 may not satisfy the central limit theorem because the approximating martingale 
 itself may not satisfy it (see Remark~5.5 in \cite{MR3222815}).  
\end{Remark}

\begin{Remark}
 In all the paper, we assume the filtration to be completely commuting. In 
 \cite{2017arXiv170201143P}, partially commuting filtration $\pr{\f_{\gr{i}}}_{\gr{i}
 \in \Z^d}$ are considered, in the sense that if $i\geq i'$ and 
 $\gr{u},\gr{v}\in \Z^{d-1}$, then for any integrable random variable $Y$, 
 \begin{equation}\label{eq:partially_commuting}
  \E{ 
  \E{
  X\mid \f_{i',\gr{v}}
  }\mid 
  \f_{i,\gr{u}}
  }=\E{X\mid \f_{i',\min\ens{\gr{u},\gr{v}}}}.
 \end{equation}
It does not seem that our results apply in this context because 
complete commutativity of the filtration is used in the proof of 
Theorem~\ref{thm:CNS_orthomartingale_approximation}.
\end{Remark}

  \subsection{Applications: projective conditions}
  
  \subsubsection{Hannan's condition}
 
 Assume that $d=1$, $T\colon \Omega\to \Omega$ is a bijective 
 bimeasurable measure preserving map and $\f_0$ is a sub-$\sigma$-algebra 
 such that $T\f_0\subset\f_0$. Assume that $f\colon \Omega\to \R$ 
 is measurable with respect to the $\sigma$-algebra generated by 
 $\bigcup_{k\in \Z}T^k\f_0$ and such that $\E{f\mid \bigcap_{k\in \Z}
 T^k\f_0}=0$ and let us consider the condition
 \begin{equation}\label{eq:Hannan_dim1}
  \sum_{i\in \Z}\norm{\E{f\circ T^i\mid \f_0}
  -\E{f\circ T^i\mid T\f_0}}<+\infty.
 \end{equation}
 That the central limit theorem is implied by \eqref{eq:Hannan_dim1}
 is contained in \cite{MR0372955} (see also Theorem~6 in 
 \cite{MR1198662}).
 When $f$ is $\f_0$-measurable, the central limit theorem and 
 the weak invariance principle were proved by Hannan 
 \cite{MR0331683,MR562049} under the assumption that $T$ 
 is weakly mixing. Dedecker and Merlevède \cite{MR2019054} 
 showed that \eqref{eq:Hannan_dim1} itself implies the 
 weak invariance principle. Finally, the invariance principle 
 when 
 $f$ satisfies \eqref{eq:Hannan_dim1} but is not necessarily 
 $\f_0$-measurable was establised in \cite{MR2359065}.

 The generalization of condition \eqref{eq:Hannan_dim1} 
 to random field has been obtained by Voln\'y and Wang.
  Let us recall the notations and results of \cite{MR3264437}.
 The projection operators with respect to a commuting filtration 
$\left(\mathcal F_{\mathbf i}\right)_{\mathbf i\in \Z^d}$ are 
defined by 
  
  \begin{equation}\label{eq:definition_projectors}
 \pi_{\mathbf j}:=\prod_{q=1}^d\pi_{j_q}^{(q)},\quad \mathbf j\in \Z^d,
\end{equation}
where for $l\in \Z$, $\pi_l^{(q)}\colon \mathbb L^1(\mathcal F)\to \mathbb L^1(\mathcal F)$ 
is defined for $f\in\mathbb L^1$ by 
\begin{equation}
 \pi_l^{(q)}(f)=\mathbb E_l^{(q)}\left[f\right]-\mathbb E_{l-1}^{(q)}\left[f\right]
\end{equation}
and 
\begin{equation}
 \mathbb E_l^{(q)}\left[f\right]=\mathbb E\left[f\mid 
 \bigvee_{\mathclap{\substack{\mathbf i\in \Z^d \\
 i_q\leqslant l}}}\mathcal F_{\mathbf i}\right], q\in [d],l\in \Z.
\end{equation}

  \begin{Theorem}[\cite{MR3264437}]\label{prop:Hannan}
   Let $\pr{\f_{\gr{i}}}_{\gr{i}\in\Z^d}:=
   \pr{T^{-\mathbf i}\f_{\gr{0}}}_{\mathbf i\in \Z^d}$ be a completely commuting 
   filtration. 
   Let $f$ be a function such that for each $q\in [d]$, $\E{ 
   f\mid T_q^l\f_{\gr{0}} }\to 0$ as $l\to +\infty$, measurable with respect to the 
   $\sigma$-algebra generated by $\bigcup_{\mathbf i\in\Z^d}T^{\mathbf i}
   \f_{\gr{0}}$ and such that $\sum_{\mathbf i\in \Z^d}\norm{\pi_{\mathbf i}(f)}
   <+\infty$. Then there exists a function $m$ such that $\pr{m\circ 
   T^{\mathbf i}}_{\mathbf i\in \Z^d}$ is an orthomartingale differences random 
   field with respect to the 
   completely commuting filtration $\pr{T^{-\mathbf i}\f_{\gr{0}}}_{\mathbf i\in \Z^d}$ and 
   such that \eqref{eq:def_orthomartingale_approximation} holds. 
  \end{Theorem}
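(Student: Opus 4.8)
The plan is to derive Theorem~\ref{prop:Hannan} from Theorem~\ref{thm:CNS_orthomartingale_approximation} by checking the more tractable sufficient condition \eqref{eq:easier_sufficient_cond}. Fix a non-empty $J\subset[d]$ and an arbitrary $E\subset[d]$; the goal is to prove that $k^{-\abs J}\norm{\sum_{\gr{1_J}\imd\gr j\imd k\gr{1_J}}P_E^{\gr j}f}_+\to 0$ as $k\to+\infty$. The basic object is the orthogonal expansion $f=\sum_{\gr m\in\Z^d}\pi_{\gr m}(f)$ into the projection operators of \eqref{eq:definition_projectors}. This identity holds in $\mathbb L^2$ precisely because $f$ is measurable with respect to the $\sigma$-algebra generated by $\bigcup_{\gr i}T^{\gr i}\f_{\gr 0}$ (so no mass is lost at $+\infty$ in any direction) and because $\E{f\mid T_q^l\f_{\gr 0}}\to 0$ for each $q$ (so none is lost at $-\infty$); Hannan's hypothesis $\sum_{\gr m}\norm{\pi_{\gr m}(f)}<+\infty$ then gives absolute convergence in $\mathbb L^2$.

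The first key step is a maximal inequality for the plus semi-norm. For any $\gr m\in\Z^d$ and any admissible $h$, the field $\pr{U^{\gr i}\pi_{\gr m}(h)}_{\gr i}$ is, with respect to the shifted filtration $\pr{\f_{\gr m+\gr i}}_{\gr i}$, an orthomartingale differences random field, since $\pi_{\gr m}=\prod_q\pi_{m_q}^{(q)}$ is a product of one-dimensional martingale-difference projections and is therefore a martingale difference in each coordinate direction. Proposition~\ref{prop:Doob} applied to $\pi_{\gr m}(h)$ yields $\abs{\gr n}^{-1/2}\norm{\max_{\gr 1\imd\gr i\imd\gr n}\abs{S_{\gr i}(\pi_{\gr m}(h))}}\leq 2^d\norm{\pi_{\gr m}(h)}$, hence $\norm{\pi_{\gr m}(h)}_+\leq 2^d\norm{\pi_{\gr m}(h)}$ after passing to the limit superior. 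Summing over $\gr m$ and using the subadditivity and homogeneity of $\norm{\cdot}_+$ gives $\norm{h}_+\leq 2^d\sum_{\gr m\in\Z^d}\norm{\pi_{\gr m}(h)}$. In other words the plus semi-norm is dominated by the Hannan semi-norm, and it suffices to prove that $k^{-\abs J}\sum_{\gr m\in\Z^d}\norm{\pi_{\gr m}\pr{\sum_{\gr{1_J}\imd\gr j\imd k\gr{1_J}}P_E^{\gr j}f}}\to 0$.

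The second step is to identify $\pi_{\gr m}P_E^{\gr j}f$. Each $P_E^{\gr j}$ is, by \eqref{eq:definition_de_Pe}, an inclusion--exclusion combination of conditional expectations onto the half-space $\sigma$-algebras $\f_{\infty\gr{1_I}}$ applied to the shifted function $U^{\gr j\cdot\eps(E)}f$. Expanding each conditional expectation in the family $\pr{\pi_{\gr m}}$ turns $P_E^{\gr j}f$ into a sum of terms $U^{\gr s}\pi_{\gr{m'}}(f)$ indexed by an orthant of multi-indices whose location is governed by $\gr j$, and the commutation relation $\pi_{\gr m}U^{\gr j}=U^{\gr j}\pi_{\gr m-\gr j}$ lets one read off that $\norm{\pi_{\gr m}P_E^{\gr j}f}\leq\sum_{\gr{m'}}\norm{\pi_{\gr{m'}}(f)}$, the sum ranging over an orthant determined by $\gr m$ and $\gr j$. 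Summing over $\gr m$ collapses this into a quantity $b(\gr j):=\sum_{\gr m}\norm{\pi_{\gr m}P_E^{\gr j}f}$ bounded by a tail of the convergent series $\sum_{\gr m}\norm{\pi_{\gr m}(f)}$ in the directions indexed by $J$; consequently $b(\gr j)\to 0$ as $\min_{q\in J}j_q\to+\infty$. Finally the triangle inequality gives $\sum_{\gr m}\norm{\pi_{\gr m}\pr{\sum_{\gr j}P_E^{\gr j}f}}\leq\sum_{\gr{1_J}\imd\gr j\imd k\gr{1_J}}b(\gr j)$, and a Ces\`aro (Toeplitz) argument shows that the average $k^{-\abs J}\sum_{\gr{1_J}\imd\gr j\imd k\gr{1_J}}b(\gr j)$ of a null multi-indexed family tends to $0$, which is exactly what was needed.

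The \emph{main obstacle} I anticipate is the bookkeeping in the second step: turning the inclusion--exclusion defining $P_E^{\gr j}$, together with the half-space conditionings $\f_{\infty\gr{1_I}}$, into a clean, sign-controlled expansion over the projections $\pi_{\gr m}$, and then verifying that the resulting index set is a genuine orthant whose $J$-directional tail is a tail of the Hannan series. The interplay between the reversal $\eps(E)$ built into $P_E$ and the coordinatewise structure of $\pi_{\gr m}$ must be tracked carefully so that the per-$\gr j$ bound $b(\gr j)$ depends on $\gr j$ only through a tail that vanishes as the coordinates of $\gr j$ in $J$ grow; this uniform tail control, rather than any single estimate, is where the argument is most delicate.
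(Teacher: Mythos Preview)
Your proposal is correct and follows essentially the same route as the paper's proof. Both arguments verify condition~\eqref{eq:easier_sufficient_cond} of Theorem~\ref{thm:CNS_orthomartingale_approximation} by first bounding the plus semi-norm by the Hannan sum $2^d\sum_{\gr m}\norm{\pi_{\gr m}(\cdot)}$ and then showing that $\sum_{\gr m}\norm{\pi_{\gr m}\bigl(\sum_{\gr j}P_E^{\gr j}f\bigr)}$ is $o(k^{\abs J})$ via a tail/Ces\`aro argument. The only differences are cosmetic: the paper quotes the maximal inequality as Lemma~5.2 of \cite{MR3264437}, whereas you re-derive it from Proposition~\ref{prop:Doob} applied to each $\pi_{\gr m}(h)$; the paper makes the second step explicit by introducing the index set $I_E=\{\gr i:i_q>0\text{ iff }q\in E\}$, observing that $\pi_{\gr i}P_E^{\gr j}f$ vanishes for $\gr i\notin I_E$ and equals a single shifted projection $U^{\gr j\cdot\eps(E)}\pi_{\gr i-\gr j\cdot\eps(E)}(f)$ for $\gr i\in I_E$ (so your ``sum over an orthant'' is in fact one term); and the paper fixes a single coordinate $q\in J$ to reduce the Ces\`aro average to a one-dimensional one, while you invoke the multi-index version directly. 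These choices do not change the argument's substance.
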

  
  We can recover this result via Theorem~\ref{thm:CNS_orthomartingale_approximation}.

  \subsubsection{Maxwell and Woodroofe condition}
  
   In the one dimensional case, conditions on the 
   quantities $\E{S_n(f)\mid T\f_{0}}$ 
  and $S_n(f)-\E{S_n(f)\mid T^{-n}\f_{0}}$ have been investigated.
  The first result in this direction was obtained by Maxwell and Woodroofe 
  \cite{MR1782272}: if $f$ is $\f_{0}$-measurable and 
  \begin{equation}\label{MW_adapted}
   \sum_{n=1}^{+\infty}\frac{\norm{\mathbb E\left[S_n(f)\mid \f_{0}\right]}}{n^{3/2} }
   <+\infty,     
  \end{equation}
  then $\pr{n^{-1/2}S_n(f) }_{n\geqslant 1}$ converges in distribution to $\eta^2 N$, 
  where $N$ is normally distributed and independent of $\eta$. 
  Then Voln\'y \cite{MR2283254} proposed a method to treat the nonadapted case. 
  Peligrad and Utev \cite{MR2123210} proved the weak invariance principle 
  under condition \eqref{MW_adapted}. The nonadapted case was addressed 
  in \cite{MR2344817}. Peligrad and Utev also showed that condition \eqref{MW_adapted}
  is optimal among conditions on the growth of the sequence 
  $\left(\norm{ \mathbb E\left[S_n(f)\mid \f_{0} \right]}\right)_{n\geqslant 1}$:
  if 
  \begin{equation}\label{weaked_MW}
   \sum_{n=1}^{+\infty}a_n\frac{\norm{\mathbb E\left[S_n(f)\mid \f_{0}\right]}}{n^{3/2} }<\infty
  \end{equation}
 for some sequence $\pr{a_n}_{n \geqslant 1}$ converging to $0$, 
 the sequence $\pr{n^{-1/2}S_n(f)}_{n \geqslant 1}$ 
 is not necessarily stochastically bounded (Theorem~1.2. of \cite{MR2123210}). 
 Voln\'y constructed  \cite{MR2679961} an example satisfying \eqref{weaked_MW} and 
 such that the sequence $\left(\norm{S_n(f)}^{-1}S_n(f)\right)_{n\geqslant 1}$ 
 admits two subsequences which converge weakly to two different distributions. 
 In dimension one, these results are the consequence of a existence of an 
 approximating martingale (see Proposition~3 in \cite{MR2797997}).
 We are able to formulate an analoguous result in the multidimensional 
 setting.
  
  \begin{Theorem}\label{thm:approximation_sous_Maxwell_Woodroofe}
  Let $T$ be a measure preserving $\Z^d$-action and let 
  $\f_{\gr{0}}$ be a sub-$\sigma$-algebra such that $\pr{T^{-\mathbf i}
  \f_{\gr{0}}}_{\mathbf i\in \Z^d}$ is a completely commuting filtration.
  Let $f$ be a square integrable function such that for any $E\subset [d]$, 
  \begin{equation}\label{eq:MW_dim_d}
   \sum_{\mathbf n\succcurlyeq \mathbf 1}
   \frac 1{\abs{\mathbf n}^{3/2}} 
   \norm{ \sum_{\gr{0}\imd\gr{i}\imd\gr{n}-\gr{1} } P_E^{\gr{i}}f }<+\infty.
  \end{equation}
  Then there exists a function $m$ such that 
  $\pr{m\circ T^{\gr{i}}}_{\gr{i}\in \Z^d}$ is 
an orthomartingale differences random field and
   \begin{equation}
 \limsup_{\mathbf n\to +\infty}\frac 1{\sqrt{\abs{\mathbf n}}}
 \norm{\max_{\mathbf 1\preccurlyeq \mathbf i\preccurlyeq \mathbf n}\abs{
 S_{\mathbf i}\pr{f-m}}}=0.
\end{equation}

  \end{Theorem}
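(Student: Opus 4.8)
The plan is to deduce the statement from the sufficient condition \eqref{eq:easier_sufficient_cond} of Theorem~\ref{thm:CNS_orthomartingale_approximation}. Writing
\[
 R_{E,J}(k):=\sum_{\gr{1_J}\imd \gr{j}\imd k\gr{1_J}}P_E^{\gr{j}}f
 \quad\text{and}\quad
 a_E(\gr{n}):=\norm{\sum_{\gr{0}\imd\gr{i}\imd\gr{n}-\gr{1}}P_E^{\gr{i}}f},
\]
I must show, for every $E\subset[d]$ and every non-empty $J\subset[d]$, that $k^{-\abs{J}}\norm{R_{E,J}(k)}_+\to 0$, whereas the hypothesis \eqref{eq:MW_dim_d} reads exactly $\sum_{\gr{n}\smd\gr{1}}\abs{\gr{n}}^{-3/2}a_E(\gr{n})<+\infty$. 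Everything therefore reduces to a quantitative comparison between the plus semi-norm of the averaged projections $R_{E,J}(k)$ and the sequence $a_E$.

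First I would record that each $P_E^{\gr{i}}f$, and hence $R_{E,J}(k)$, belongs to $\h_E$: it is a coordinatewise martingale difference in every direction $q\in E$ and is adapted to the reversed filtration in the remaining directions. This dichotomy dictates how to estimate $\norm{\max_{\gr{1}\imd\gr{i}\imd\gr{n}}\abs{S_{\gr{i}}(R_{E,J}(k))}}$: in the directions of $E$ the partial-summation operator generates an orthomartingale, so the maximum is controlled by Doob's inequality (Proposition~\ref{prop:Doob}), while in the directions outside $E$ there is no martingale structure and one must instead perform a dyadic blocking in the spirit of Maxwell and Woodroofe. Combining the two regimes, the aim is a maximal inequality of the form
\[
 \norm{g}_+\leq C\sum_{\gr{r}\in \N^d}\frac{1}{\abs{\gr{2^r}}^{1/2}}
 \norm{\sum_{\gr{0}\imd\gr{i}\imd\gr{2^r}-\gr{1}}P_E^{\gr{i}}g}
 \quad\text{for }g\in\h_E .
\]
Applying it to $g=R_{E,J}(k)$ and using that composing $P_E^{\gr{i}}$ with $R_{E,J}(k)$ telescopes into a partial sum of $P_E$-projections of the original $f$ (because the $P_E^{\gr{i}}$ are built from shifted conditional expectations and add their indices), I can bound $\norm{R_{E,J}(k)}_+$ by a dyadic sum involving the quantities $a_E(\cdot)$, the block $k\gr{1_J}$ being incorporated into the range of summation.

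The final step is an elementary but careful Cesàro--Kronecker argument. The map $\gr{n}\mapsto a_E(\gr{n})$ is, up to a multiplicative constant, subadditive in each coordinate, so by the usual dyadic comparison the summability hypothesis \eqref{eq:MW_dim_d} is equivalent to its dyadic form $\sum_{\gr{r}\in\N^d}\abs{\gr{2^r}}^{-1/2}a_E(\gr{2^r})<+\infty$. Inserting this into the maximal estimate above and dividing by $k^{\abs{J}}$, the tail of a convergent series together with the normalising factor forces $k^{-\abs{J}}\norm{R_{E,J}(k)}_+\to 0$, which is precisely \eqref{eq:easier_sufficient_cond}; an application of Theorem~\ref{thm:CNS_orthomartingale_approximation} then produces the approximating orthomartingale and yields the conclusion.

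The main obstacle is the maximal inequality of the second paragraph. Interleaving the two regimes --- Doob's orthomartingale inequality in the directions of $E$ and a dyadic chaining in the complementary directions --- inside a single $d$-dimensional maximum requires iterating the one-dimensional argument coordinate by coordinate while controlling the error terms generated at each stage, and tracking the projection operators $P_E^{\gr{i}}$ through these iterations (in particular checking that they telescope correctly against $R_{E,J}(k)$) is delicate. Establishing the coordinatewise subadditivity of $a_E$ with dimension-free constants, which is what allows the passage to the dyadic form of \eqref{eq:MW_dim_d}, is a secondary technical point.
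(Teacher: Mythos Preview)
Your proposal is correct and follows essentially the same route as the paper: reduce to the sufficient condition \eqref{eq:easier_sufficient_cond} of Theorem~\ref{thm:CNS_orthomartingale_approximation}, control $\norm{R_{E,J}(k)}_+$ via a maximal inequality for functions in $\h_E$ (this is exactly Proposition~\ref{prop:inegalite_maximale}, proved by the double induction you anticipate), use the semigroup property $P_E^{\gr{i}}P_E^{\gr{j}}=P_E^{\gr{i}+\gr{j}}$ on $\h_E$ to telescope, and then pass to the limit in $k$ using subadditivity of $a_E$. The only minor difference is the endgame: the paper reduces $k^{-\abs{J}}$ to $k^{-1}$ by the contraction property, then appeals to dominated convergence (each term $a_{\gr{n},k}\to 0$ by one-dimensional subadditivity, with the summable majorant $\abs{\gr{n}}^{-3/2}a_E(\gr{n})$ supplied directly by \eqref{eq:MW_dim_d}) rather than a Ces\`aro--Kronecker argument, but the two are equivalent here.
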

\begin{Remark}
 In dimension one, we recover the result of \cite{MR2123210,MR2344817}. 
 In dimension two, condition \eqref{eq:MW_dim_d} reads as follows: if 
 the series 
 \begin{equation}
 A_{\emptyset}:= \sum_{n_1,n_2\geq 1}
  \frac 1{n_1^{3/2}n_2^{3/2}}
  \norm{\E{S_{n_1,n_2}\pr{f}\mid \f_{0,0}}}
 \end{equation}
\begin{equation}
 A_{\ens{1}}:=
 \sum_{n_1,n_2\geq 1}
  \frac 1{n_1^{3/2}n_2^{3/2}}\norm{\E{S_{n_1,n_2}\pr{f}\mid \f_{\infty,0}}-
  \E{S_{n_1,n_2}\pr{f}\mid \f_{n_1-1,0}}} 
\end{equation}
\begin{equation}
 A_{\ens{2}}:=
 \sum_{n_1,n_2\geq 1}
  \frac 1{n_1^{3/2}n_2^{3/2}}
  \norm{\E{S_{n_1,n_2}\pr{f}\mid \f_{0,\infty}}-
  \E{S_{n_1,n_2}\pr{f}\mid \f_{0,n_2-1}}} 
\end{equation}
\begin{multline}
 A_{\ens{1,2}}:=
\sum_{n_1,n_2\geq 1}
  \frac 1{n_1^{3/2}n_2^{3/2}}
  \left\lVert S_{n_1,n_2}\pr{f}-\E{S_{n_1,n_2}\pr{f}\mid \f_{n_1-1,\infty}}\right.\\
  \left.-\E{S_{n_1,n_2}\pr{f}\mid \f_{\infty,n_2-1}} +
  \E{S_{n_1,n_2}\pr{f} \mid \f_{n_1-1,n_2-1}} \right\rVert
\end{multline}
are convergent, then there exists an orthomartingale differences 
random fields satisfying \eqref{eq:def_orthomartingale_approximation}. If $f$ is 
$\mathcal F_{\gr{0}}$-measurable, then the series $A_{\ens{1}}$, $A_{\ens{2}}$ and 
$A_{\ens{1,2}}$ are convergent.
\end{Remark}

\begin{Remark}
 Using an adaptation of the construction given in \cite{MR2446326,MR2475604}, we 
 can construct an example of function $f$ which satisfies the assumption
 of Proposition~\ref{prop:Hannan} but not that of 
 Theorem~\ref{thm:approximation_sous_Maxwell_Woodroofe} and vice-versa. 
 Let  $\pr{\Omega_1,\mathcal A_1,\mu_1,T_1}$ be the dynamical system 
 considered in \cite{MR2446326,MR2475604} and for 
 $2\leq i\leq d$, let $\pr{\Omega_i,\mathcal A_i,\mu_i,T_i}$ be 
 Bernoulli dynamical systems. Then consider $\Omega:=\prod_{q=1}^d\Omega_q$, 
 $\mathcal A:=\bigotimes_{q=1}^d\mathcal A_q$, $T^\gr{i}
 \pr{\pr{\omega_q}_{q=1}^d}:=\pr{T_q^{i_q}\omega_q}_{q=1}^d$. For 
 $2\leq q\leq d$, let $e^{(q)}\colon \Omega_q\to \R$ be such that 
 $\pr{e^{(q)}\circ T_q^i}_{i\in \Z}$ is i.i.d. If $f$ is the function 
 defined in \cite{MR2446326,MR2475604}, then let $F=f\cdot\prod_{q=2}^d
 e^{(q)}$ and $\f_{\gr{0}}:=\f_0\otimes\bigotimes_{q=2}^d\sigma\pr{ 
 e^{(q)}\circ T_q^i,i\leq 0
 }$. In this way, the $F$ satisfies the multidimensional Hannan and Maxwell and 
 Woodroofe conditions if and only if so does $f$ for the unidimensional 
 ones.
\end{Remark}
\begin{Remark}
 Using the same construction as previously, but where 
 $\pr{\Omega_1,\mathcal A_1,\mu_1,T_1}$ is the dynamical system 
 involved in the proof of Theorem~\cite{MR2123210}, we can see that 
 the weight $\abs{\gr{n}}^{-3/2}$ in condition \eqref{eq:MW_dim_d} 
 is in some sense optimal.
\end{Remark}

\begin{Remark}
 If $f$ is an $\f_{\gr{0}}$-measurable function, then condition 
 \eqref{eq:MW_dim_d} holds as soon as 
 \begin{equation}\label{eq:weakned_MW}
  \sum_{\gr{n}\smd \gr{1}} \frac 1{\abs{\gr{n}}^{1/2}}
  \norm{\E{U^{\gr{n}} \pr{f}\mid \f_{\gr{0}}   }}<+\infty.
 \end{equation}
It was proven in \cite{MR3222815} that when the filtration 
$\pr{\f_{\gr{i}}}_{\gr{i}\in \Z^d}$ is generated by an i.i.d. random field, 
condition \eqref{eq:weakned_MW} implies the central limit theorem. Moreover, 
if \eqref{eq:weakned_MW} holds when the $\mathbb L^2$-norm is replaced by 
the $\mathbb L^p$-norm for some $p>2$, then the invariance principle holds. 
Our result thus extend these ones, since only a finite moment of order two 
is required and the condition on the dependence is weaker. 
If  $f$ is a function such that for each $q\in [d]$, $\E{ 
   f\mid T_q^l\f_{\gr{0}} }\to 0$ as $l\to +\infty$, measurable with respect to the 
   $\sigma$-algebra generated by $\bigcup_{\mathbf i\in\Z^d}T^{\mathbf i}
   \f_{\gr{0}}$ and satisfies \eqref{eq:weakned_MW}, then 
   by Lemma~6.2 in \cite{MR3264437}, $\sum_{\mathbf i\in \Z^d}\norm{\pi_{\mathbf i}(f)}
   <+\infty$.
\end{Remark}
\begin{Remark}
 In \cite{2017arXiv170201143P}, a central limit theorem has been obtain 
 for an $\f_{\gr{0}}$-measurable function $f$ satisfying 
 \eqref{eq:MW_dim_d}. Their result applies in the context 
 of partially commuting filtrations (see \eqref{eq:partially_commuting}), 
 which includes a larger class of filtrations than completely commuting 
 ones. Nevertheless, our results include the nonadapted case and 
 lead to an invariance principle.
\end{Remark}

\begin{Remark}
Condition \eqref{eq:MW_dim_d} is much less restrictive than admitting an orthomartingale 
coboundary decomposition in $\mathbb L^2$ (see Theorem~\ref{thm:OMC_decomposition}). 
\end{Remark}

  A key step for proving that a function satisfying the Maxwell and 
  Woodroofe condition also satisfies the conditions of 
 Theorem~\ref{thm:CNS_orthomartingale_approximation} is a maximal inequality, which is 
  of independent interest. Note that a similar inequality has been obtained 
  in \cite{MR3222815} but without the maxima.

  \begin{Proposition}\label{prop:inegalite_maximale}
  Let $d\geqslant 1$ be an integer. There exists a constant $C(d)$ such that 
  for any $\Z^d$-measure preserving action $T$, any $\mathcal F_{\gr{0}}$
  such that $\pr{T^{-\mathbf i}\f_{\gr{0}}}_{\mathbf i\in \Z^d}$ be a completely commuting 
   filtration, any $\mathbf n\succcurlyeq \gr{0}$, any $E\subset [d]$ and any
   $f\in \h_E$:
  \begin{multline}\label{eq:inegalite_maximale_enonce}
   \norm{\max_{\mathbf 1\preccurlyeq\mathbf i\preccurlyeq \mathbf 2^{\mathbf n}}
   \abs{S_{\mathbf i}(f)}
   }\leqslant C(d) \abs{2^{\gr{n}}}^{1/2}  
  \sum_{\mathbf 0\preccurlyeq \mathbf i\preccurlyeq \mathbf n}
  2^{-\mathbf i/2}\norm{\sum_{\gr{0}\imd\gr{j}\imd \gr{2^i} } 
  P_E^{\gr{j}} f     } \\ 
  \leqslant C(d)^2  \abs{2^{\gr{n}}}^{1/2}  
  \sum_{\gr{n}\smd \gr{1}}\frac 1{\abs{\gr{n}}^{3/2}}
  \norm{\sum_{\gr{0}\imd\gr{j}\imd \gr{n}-\gr{1} } 
  P_E^{\gr{j}} f     }.
  \end{multline} 
  
  \end{Proposition}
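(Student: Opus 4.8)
The plan is to prove the two displayed inequalities separately: the second is an elementary comparison between a dyadic and a full Maxwell--Woodroofe sum, whereas the first is the genuine maximal estimate and carries the analytic content. Throughout I fix $E\subset[d]$ and abbreviate $D_{\gr{r}}:=\sum_{\gr{0}\imd\gr{j}\imd\gr{2^r}}P_E^{\gr{j}}f$, so that the right-hand side of the first inequality is $C(d)\abs{2^{\gr{n}}}^{1/2}\sum_{\gr{0}\imd\gr{i}\imd\gr{n}}2^{-\gr{i}/2}\norm{D_{\gr{i}}}$; I also write $\beta(\gr{m}):=\norm{\sum_{\gr{0}\imd\gr{j}\imd\gr{m}-\gr{1}}P_E^{\gr{j}}f}$ for $\gr{m}\smd\gr{1}$.

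For the second inequality I would first check that $\gr{m}\mapsto\beta(\gr{m})$ is subadditive in each coordinate separately: splitting the summation range in a single coordinate and using that the conditional expectations entering the definition of $P_E^{\gr{j}}$ are contractions, together with the fact that the maps $T_q$ preserve the measure, the triangle inequality yields subadditivity in that coordinate. I would then apply, successively in each of the $d$ coordinates, the elementary comparison for nonnegative subadditive sequences underlying the Maxwell--Woodroofe and Peligrad--Utev theory (\cite{MR1782272,MR2123210}): for a one-variable subadditive sequence $(b_l)_{l\ge1}$ one has $\sum_{r\ge0}2^{-r/2}b_{2^r}\le C\sum_{l\ge1}l^{-3/2}b_l$. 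Iterating this coordinatewise transforms $\sum_{\gr{0}\imd\gr{i}\imd\gr{n}}2^{-\gr{i}/2}\norm{D_{\gr{i}}}$ (up to the harmless boundary shift $\gr{2^i}\mapsto\gr{2^i}+\gr{1}$) into $\sum_{\gr{m}\smd\gr{1}}\abs{\gr{m}}^{-3/2}\beta(\gr{m})$, the constant $C(d)$ collecting the $d$ one-dimensional applications; this is exactly the second inequality.

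For the first inequality I would reduce the maximum over the rectangle $[\gr{1},\gr{2^n}]$ to a sum over dyadic scales by a coarse-to-fine splitting: locating each index inside the size-$\gr{2^r}$ block containing it, $S_{\gr{k}}(f)$ is the partial sum over the complete preceding blocks plus a fluctuation internal to the current block, and iterating on the finer scale gives $\max_{\gr{1}\imd\gr{k}\imd\gr{2^n}}\abs{S_{\gr{k}}(f)}\le\sum_{\gr{0}\imd\gr{r}\imd\gr{n}}\max_{\gr{1}\imd\gr{i}\imd\gr{2^{n-r}}}\abs{\Sigma^{(\gr{r})}_{\gr{i}}}$, where $\Sigma^{(\gr{r})}_{\gr{i}}=\sum_{\gr{0}\imd\gr{l}\imd\gr{i}-\gr{1}}U^{\gr{l}\cdot\gr{2^r}}S_{\gr{2^r}}(f)$ is the partial sum of the size-$\gr{2^r}$ block sums. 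Replacing each block sum $S_{\gr{2^r}}(f)$ by its projection $D_{\gr{r}}$ turns $\Sigma^{(\gr{r})}$ into the partial-sum process of an orthomartingale whose increments are the shifts $U^{\gr{l}\cdot\gr{2^r}}D_{\gr{r}}$, so Proposition~\ref{prop:Doob}, applied on the block lattice $[\gr{1},\gr{2^{n-r}}]$, bounds its maximum by $2^d\abs{2^{\gr{n}-\gr{r}}}^{1/2}\norm{D_{\gr{r}}}=2^d\abs{2^{\gr{n}}}^{1/2}2^{-\gr{r}/2}\norm{D_{\gr{r}}}$. Summing over $\gr{r}\imd\gr{n}$ and using the triangle inequality then produces the claimed bound.

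The main obstacle is the replacement in the previous step: one must show that the raw block sums can legitimately be exchanged for the projected blocks $D_{\gr{r}}$, i.e. that $\pr{U^{\gr{l}\cdot\gr{2^r}}D_{\gr{r}}}_{\gr{l}}$ is a bona fide orthomartingale-difference array on the block lattice (so that Proposition~\ref{prop:Doob} applies with the clean factor $\abs{2^{\gr{n}-\gr{r}}}^{1/2}$), and that the discrepancies $S_{\gr{2^r}}(f)-D_{\gr{r}}$ telescope across neighbouring scales rather than accumulating. Both facts rest on the algebraic identities satisfied by the operators $P_E^{\gr{i}}$ of Definition~\ref{dfn:contractions} together with the defining relations $\E{h\mid\f_{\infty\gr{1_{E'}}}}=0$ for $E'\subsetneq E$ of the space $\h_E$, and it is precisely here that complete commutativity of the filtration is used. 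I expect that organising this verification by induction on $d$, peeling off one direction at a time according to whether $q\in E$ (a martingale-difference direction, controlled directly by Doob's inequality) or $q\notin E$ (an adapted direction, requiring the full dyadic blocking), is the most convenient route; the remaining manipulations are multiresolution bookkeeping and routine triangle-inequality estimates.
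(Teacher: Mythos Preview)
Your treatment of the second inequality is sound and matches what the paper does (a coordinatewise application of the one-dimensional comparison between dyadic and full sums for subadditive sequences, in the spirit of Lemma~2.7 of \cite{MR2123210}).

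The first inequality, however, contains a genuine gap. Your claim that the shifted projections $\pr{U^{\gr{l}\cdot\gr{2^r}}D_{\gr{r}}}_{\gr{l}}$ form an orthomartingale-difference array on the block lattice is false. Already in dimension one with $E=\emptyset$ one has $D_r=\E{S_{2^r+1}(f)\mid\f_0}$, so $U^{l\cdot 2^r}D_r$ is $\f_{l\cdot 2^r}$-measurable, but $\E{U^{l\cdot 2^r}D_r\mid\f_{(l-1)\cdot 2^r}}=\E{U^{l\cdot 2^r}S_{2^r+1}(f)\mid\f_{(l-1)\cdot 2^r}}$ is not zero in general. In fact $D_{\gr{r}}$ is precisely the kind of object one must \emph{subtract} from $S_{\gr{2^r}}(f)$ to obtain a block martingale increment, not the martingale increment itself; so Proposition~\ref{prop:Doob} cannot be invoked on the $\Sigma^{(\gr{r})}$ as you describe. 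You acknowledge this replacement as ``the main obstacle'' but the resolution you sketch (telescoping of the discrepancies, induction on $d$) is left entirely unspecified, and the difficulty is not a matter of bookkeeping: it is the heart of the proof.

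The paper proceeds quite differently. It runs a \emph{double} induction: first on $d$ (the base case being the one-dimensional Peligrad--Utev/Voln\'y inequality), and then, for the passage $d\to d+1$, on $n_{d+1}$. The inductive step writes $f=g+U_{d+1}^{s(E)}P_{d+1,E}^{\gr{e_{d+1}}}f$, where $g$ is built so that the maxima of its partial sums form a submartingale in the $(d+1)$-direction (Lemma~\ref{lem:submartingale_property}); Doob's inequality then reduces the $g$-contribution to a $d$-dimensional problem. The remaining piece is split into a cheap term controlled directly by the $d$-dimensional hypothesis and a term to which the $(d+1)$-dimensional induction hypothesis is applied with $T_{d+1}$ replaced by $T_{d+1}^2$ --- this last substitution is what realises the dyadic halving and produces the geometric weights $\abs{\gr{2^i}}^{-1/2}$. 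The recursively defined constants $K(d)$, $C(d,J)$ are arranged precisely so that the three contributions recombine into the stated bound. Your outline does not capture this mechanism.
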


 Examples 5 and 6 in \cite{2017arXiv170201143P} are formulated in the 
 context of completely commuting filtration. Our results can be used to 
 treat non causal linear and Volterra random fields. 
 We derive from Theorem~\ref{thm:approximation_sous_Maxwell_Woodroofe} 
 a sufficient condition for a linear random field to satisfy the 
 weak invariance principle.
 
 \begin{Corollary}\label{cor:linear_processes}
  Let $\pr{\eps_{\gr{i}}}_{\gr{i}\in \Z^d}$ be 
  an i.i.d. random field where $\eps_{\gr{0}}$ is 
  centered and square integrable. Let 
  $T^{\gr{j}}\colon \R^{\Z^d}\to \R^{\Z^d}$ be defined as 
  $T^{\gr{j}}\pr{\pr{x_{\gr{i}}}_{\gr{i}\in \Z^d}}
  =\pr{\pr{x_{\gr{i}+\gr{j}}}_{\gr{i}\in \Z^d}}$, $\f_{\gr{0}}=
  \sigma\pr{\eps_{\gr{i}},\gr{i}\imd\gr{0}}$ and 
  \begin{equation}\label{eq:definition_of_linear_process}
   f=\sum_{\gr{i}\in \Z^d}
   a_{\gr{i}} \eps_{-\gr{i}},
  \end{equation}
 where $a_{\gr{i}}\in \R$ and $\sum_{\gr{i}\in \Z^d}a_{\gr{i}}^2
 <+\infty$. Define for $E\subset [d]$ and $\gr{n}\smd\gr{1}$,
 \begin{equation}
  \Delta_{E,\gr{n}}:=
  \sum_{\gr{j}\in \N^d,\gr{j}\cdot \gr{1_E} \smd \gr{1_E} 
  }\pr{\sum_{\gr{0}\imd \gr{k}\imd\gr{n}-\gr{1}} 
  a_{\pr{\gr{k}+\gr{j}}\cdot \eps\pr{E}} }^2.
 \end{equation}
 Assume that for any $E\subset [d]$, the convergence 
 \begin{equation}\label{eq:linear_processes_MW}
  \sum_{\gr{n}\smd\gr{1}}\abs{\gr{n}}^{-3/2}\Delta_{E,\gr{n}}^{1/2}<+\infty
 \end{equation}
 holds. 
 Then $f$ satisfies the invariance principle 
 in $C\pr{\left[0,1\right]^d}$.
 \end{Corollary}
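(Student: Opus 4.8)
The plan is to verify that the linear random field $f$ satisfies the multidimensional Maxwell and Woodroofe condition \eqref{eq:MW_dim_d} for every $E\subset[d]$, and then to invoke Theorem~\ref{thm:approximation_sous_Maxwell_Woodroofe} together with the ergodicity of the coordinate shifts. First I would compute the projections $P_E^{\gr{i}}f$ explicitly. Since the field $\pr{\eps_{\gr{i}}}_{\gr{i}\in\Z^d}$ is i.i.d.\ and $\f_{\gr{0}}=\sigma\pr{\eps_{\gr{i}},\gr{i}\imd\gr{0}}$, one has $\f_{\infty\gr{1_J}}=\sigma\pr{\eps_{\gr{\ell}}\colon \ell_q\leq 0\text{ for }q\notin J}$, so that $\E{\eps_{\gr{\ell}}\mid\f_{\infty\gr{1_J}}}$ equals $\eps_{\gr{\ell}}$ when $\ell_q\leq 0$ for every $q\notin J$ and vanishes otherwise. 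Substituting $U^{\gr{i}\cdot\eps\pr{E}}f=\sum_{\gr{k}}a_{\gr{k}}\eps_{-\gr{k}+\gr{i}\cdot\eps\pr{E}}$ into \eqref{eq:definition_de_Pe} and writing $\gr{\ell}=-\gr{k}+\gr{i}\cdot\eps\pr{E}$, the alternating sum $\sum_{J\subset E}\pr{-1}^{\abs{J}+\abs{E}}$ of these indicators factorizes over coordinates and collapses, via the elementary identity $\sum_{K\subset E}\pr{-1}^{\abs{K}}\prod_{q\in K}c_q=\prod_{q\in E}\pr{1-c_q}$ with $c_q=\mathbf 1\ens{\ell_q\leq 0}$, to the indicator that $\ell_q\leq 0$ for $q\notin E$ and $\ell_q\geq 1$ for $q\in E$.

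This yields a clean formula: $P_E^{\gr{i}}f$ is the sum of $a_{\gr{k}}\eps_{\gr{\ell}}$ over those $\gr{k}$ for which $k_q\geq i_q$ when $q\notin E$ and $k_q\leq -i_q-1$ when $q\in E$. Next I would sum over the box $\gr{0}\imd\gr{i}\imd\gr{n}-\gr{1}$ and collect, for each fixed admissible $\gr{\ell}$, the coefficient of $\eps_{\gr{\ell}}$. The change of variables $j_q=-\ell_q$ for $q\notin E$ (whence $j_q\geq 0$) and $j_q=\ell_q$ for $q\in E$ (whence $j_q\geq 1$) sets up a bijection between the admissible $\gr{\ell}$ and the multiindices $\gr{j}$ occurring in the definition of $\Delta_{E,\gr{n}}$, under which the coefficient of $\eps_{\gr{\ell}}$ becomes exactly $\sum_{\gr{0}\imd\gr{k}\imd\gr{n}-\gr{1}}a_{\pr{\gr{k}+\gr{j}}\cdot\eps\pr{E}}$. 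Since distinct $\gr{\ell}$ index orthogonal, identically scaled summands $\eps_{\gr{\ell}}$, the Pythagorean identity gives
\begin{equation*}
 \norm{\sum_{\gr{0}\imd\gr{i}\imd\gr{n}-\gr{1}}P_E^{\gr{i}}f}=\norm{\eps_{\gr{0}}}\,\Delta_{E,\gr{n}}^{1/2}.
\end{equation*}
Hence condition \eqref{eq:MW_dim_d} for every $E\subset[d]$ is equivalent, up to the factor $\norm{\eps_{\gr{0}}}$, to the standing assumption \eqref{eq:linear_processes_MW}.

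With \eqref{eq:MW_dim_d} established, Theorem~\ref{thm:approximation_sous_Maxwell_Woodroofe} furnishes a function $m$ for which $\pr{m\circ T^{\gr{i}}}_{\gr{i}\in\Z^d}$ is an orthomartingale differences random field satisfying the orthomartingale approximation \eqref{eq:def_orthomartingale_approximation}. Because $\pr{\eps_{\gr{i}}}_{\gr{i}\in\Z^d}$ is i.i.d., each coordinate shift $T_q$ is a Bernoulli shift, hence ergodic, so as explained after the definition of the orthomartingale approximation the approximation \eqref{eq:def_orthomartingale_approximation} transfers to the invariance principle in $C\pr{[0,1]^d}$, the limit being the Brownian sheet.

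The main obstacle I anticipate lies in the bookkeeping of the first two steps: one must track carefully the sign reversals encoded by $\eps\pr{E}$ and the asymmetry between the ``boundary'' directions $q\in E$, where the index $\ell_q$ is pushed strictly positive, and the directions $q\notin E$, where $\ell_q$ stays nonpositive, and then confirm that the reindexing $\gr{\ell}\leftrightarrow\gr{j}$ is a genuine bijection respecting exactly the constraints $\gr{j}\cdot\gr{1_E}\smd\gr{1_E}$ in the definition of $\Delta_{E,\gr{n}}$. Once the identity $\norm{\sum P_E^{\gr{i}}f}=\norm{\eps_{\gr{0}}}\Delta_{E,\gr{n}}^{1/2}$ is secured, the conclusion follows by direct appeal to the results established earlier in the paper.
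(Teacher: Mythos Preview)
Your proof is correct and follows the same approach as the paper: compute $P_E^{\gr{i}}f$ explicitly using the i.i.d.\ structure, identify $\norm{\sum_{\gr{0}\imd\gr{i}\imd\gr{n}-\gr{1}}P_E^{\gr{i}}f}$ with $\norm{\eps_{\gr{0}}}\Delta_{E,\gr{n}}^{1/2}$ via orthogonality, apply Theorem~\ref{thm:approximation_sous_Maxwell_Woodroofe}, and conclude from ergodicity of the shift. Your treatment is in fact more careful than the paper's terse version, which writes $\norm{\sum P_E^{\gr{i}}f}\leq\Delta_{E,\gr{n}}$ (missing the square root and the factor $\norm{\eps_{\gr{0}}}$) and omits the inclusion--exclusion argument you spell out.
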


 \begin{Remark}
  Corollary~\ref{cor:linear_processes} also holds 
  when we define the linear process by \eqref{eq:definition_of_linear_process} 
  but the innovations $\eps_{\gr{i}}$ are only supposed to be
  orthomartingale differences with respect to a completely commuting filtration 
  $\pr{\f_{\gr{i}}}_{\gr{i}\in \Z^d}$ (where $\f_{\gr{i}}$ is not supposed to be
  generated by i.i.d.).
 \end{Remark}

\section{Proofs} \label{sec:proofs}
 
 \subsection{Proof of Theorem~\ref{thm:OMC_decomposition}}
  
 \subsubsection{Contractions}
 
 In the next proposition, we collect some properties of the operators 
 $P_{d,E}$ and of the spaces $\h_{d,E}$.
 
 \begin{Proposition}
 \label{prop:Pq}
 
 \begin{enumerate}
  \item \label{itm:contraction} For any $E\subset [d]$ and 
  any square integrable function $f$, $\norm{P_E^{\gr{0}}f}\leq \norm{f}$.
  \item\label{itm:semi_group} Let $d\geq 1$, $E\subset [d]$ 
 and $\gr{j},\gr{k}\in \N^d$. For any 
 function $f\in \h_E$, the function $P_E^{\gr{k}}f$ belongs 
 to $\h_E$ and 
 \begin{equation}\label{eq:semi_group}
  P_E^{\gr{j}}\circ P_E^{\gr{k}}f=P_E^{\gr{j}+\gr{k}}f.
 \end{equation}
  \item\label{itm:commutativity_PU} For any $d\geq 1$ and $E\subset [d+1]$, 
  \begin{equation}
   \widetilde{P}^{\gr{j}}_{d,E\setminus\ens{d+1}} 
   P_{d+1,E}^{\gr{e_{d+1}}}f= 
   P_{d+1,E}^{\gr{j}+\gr{e_{d+1}}}f,
  \end{equation}
 where  $\widetilde{P}=P$ if $d+1\notin E$ and $\widetilde{P}$ 
 is defined similarly as \eqref{eq:definition_de_Pe} and 
 \eqref{eq:definition_de_Pd} but $\f_{\gr{0}}$ is replaced by 
 $\f_{\infty\gr{1_{\ens{d+1}}}}$.

 \item\label{itm:commutativity_P_U_d} For any $d\geq 1$ and 
 any positive integer $j$, 
 \begin{equation}
  P_{d+1,E}^{j\gr{e_{d+1}}} U_{d+1}^{s\pr{E}} P_{d+1,E}^{\gr{e_{d+1}}}f
  =P_{d+1,E}^{j\gr{e_{d+1}}} f
 \end{equation}
 where $s\pr{E}=1$ if $d+1 \in E$ and $-1$ otherwise, 
 \end{enumerate}

 \end{Proposition}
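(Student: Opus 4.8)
The plan is to reduce all four assertions to a single product decomposition of $P_E^{\gr{i}}$ into commuting one-coordinate projections. For $q\in[d]$ write $R_q:=\E{\cdot\mid\f_{\infty\gr{1_{[d]\setminus\ens{q}}}}}$, the conditional expectation onto the $\sigma$-algebra that is full in every direction but $q$ and bounded by $\gr{0}$ in direction $q$. Complete commutativity is precisely what makes the $R_q$ pairwise commuting orthogonal projections of $\mathbb L^2$ whose products realise the joint conditioning, $\prod_{q\in[d]\setminus J}R_q=\E{\cdot\mid\f_{\infty\gr{1_J}}}$ for every $J\subset[d]$. Granting this, an inclusion--exclusion rearrangement of the sum in \eqref{eq:definition_de_Pe}--\eqref{eq:definition_de_Pd} identifies $\sum_{J\subset E}\pr{-1}^{\abs{J}+\abs{E}}\E{\cdot\mid\f_{\infty\gr{1_J}}}$ with the commuting product $\Pi_E:=\prod_{q\in[d]\setminus E}R_q\prod_{q\in E}\pr{\Id-R_q}$; since the shift is applied before the conditional expectations, this gives the factorisation
\begin{equation}
 P_E^{\gr{i}}=\Pi_E\,U^{\gr{i}\cdot\eps\pr{E}},\qquad \gr{i}\in\N^d.
\end{equation}

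Next I would record two one-coordinate identities, valid for every integer $m\geq 0$ because $\f_{\gr{0}}\subset T_q^{-1}\f_{\gr{0}}$ renders the shifted $\sigma$-algebras nested: $R_qU_q^{m}R_q=R_qU_q^{m}$ and $\pr{\Id-R_q}U_q^{-m}\pr{\Id-R_q}=\pr{\Id-R_q}U_q^{-m}$, the second being equivalent to $\pr{\Id-R_q}U_q^{-m}R_q=0$. Both follow from the tower property after conjugating $R_q$ by $U_q^{\pm m}$, an operation that only moves the boundary of the conditioning $\sigma$-algebra in direction $q$. Since moreover $R_q$ commutes with $U_p$ for $p\neq q$ and with every $R_p$, grouping the triple product $\Pi_E\,U^{\gr{j}\cdot\eps\pr{E}}\,\Pi_E$ coordinate by coordinate and applying the appropriate identity in each slot (the exponent being $+j_q$ where the factor is $R_q$ and $-j_q$ where it is $\Id-R_q$) yields the ``middle projection collapse''
\begin{equation}
 \Pi_E\,U^{\gr{j}\cdot\eps\pr{E}}\,\Pi_E=\Pi_E\,U^{\gr{j}\cdot\eps\pr{E}},\qquad \gr{j}\in\N^d.
\end{equation}

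With these tools the four statements are short. The contraction bound holds because $P_E^{\gr{0}}=\Pi_E$ is a product of commuting orthogonal projections, hence itself an orthogonal projection, and one checks directly that its range is $\h_E$. For the second assertion, $P_E^{\gr{k}}f\in\operatorname{range}\Pi_E=\h_E$ is immediate, and the semigroup law reads $P_E^{\gr{j}}P_E^{\gr{k}}=\Pi_E U^{\gr{j}\cdot\eps\pr{E}}\Pi_E U^{\gr{k}\cdot\eps\pr{E}}=\Pi_E U^{\gr{j}\cdot\eps\pr{E}}U^{\gr{k}\cdot\eps\pr{E}}=P_E^{\gr{j}+\gr{k}}$ by the collapse identity, in fact as an operator identity on all of $\mathbb L^2$. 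The two inductive statements are the same mechanism isolated in the distinguished coordinate $d+1$: under either choice of base $\sigma$-algebra one verifies $\widetilde R_q=R_q$ for $q\in[d]$, factors $\Pi_E^{(d+1)}=\Phi_{d+1}\,\widetilde\Pi_{E\setminus\ens{d+1}}$ with $\Phi_{d+1}\in\ens{R_{d+1},\Id-R_{d+1}}$, commutes $\Phi_{d+1}$ and the direction-$(d+1)$ shift to the front, and collapses the remaining middle projection exactly as above; matching the signs $s\pr{E}$ and the exponent $j-1\geq 0$ produces the two claimed equalities.

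The one genuinely delicate point is the very first step: showing that the $R_q$ are commuting projections and that $\prod_{q\in[d]\setminus J}R_q=\E{\cdot\mid\f_{\infty\gr{1_J}}}$. Complete commutativity is stated for the finite-index $\sigma$-algebras $\f_{\gr{k}}$, whereas each $\f_{\infty\gr{1_J}}$ is an infinite join; I would write $\f_{\infty\gr{1_J}}$ as the increasing limit of finite-index $\sigma$-algebras and transfer commutation from the finite level to the limit by martingale convergence. Once this and the one-coordinate identities are in hand, everything reduces to bookkeeping with commuting projections.
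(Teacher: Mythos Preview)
Your approach is correct and somewhat more streamlined than the paper's. The paper proves item~\ref{itm:semi_group} by expanding $P_E^{\gr{j}}\circ P_E^{\gr{k}}$ directly from the defining double sum, then arguing case by case (with a separate treatment when $E=[d]$) that all terms except those with $I=E$ cancel; items~\ref{itm:commutativity_PU} and~\ref{itm:commutativity_P_U_d} are then reduced to item~\ref{itm:semi_group} by hand. Your factorisation $P_E^{\gr{i}}=\Pi_E\,U^{\gr{i}\cdot\eps(E)}$ with $\Pi_E=\prod_{q\notin E}R_q\prod_{q\in E}(\Id-R_q)$ replaces these computations by a single ``collapse'' identity $\Pi_E U^{\gr{j}\cdot\eps(E)}\Pi_E=\Pi_E U^{\gr{j}\cdot\eps(E)}$, which is indeed just the coordinate-wise tower property. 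This also shows, as you note, that the semigroup law holds as an operator identity on all of $\mathbb L^2$, not only on $\h_E$. The paper does implicitly use your factorisation (it writes $P_E^{\gr{k}}=P_E^{\gr{0}}U^{\gr{k}\cdot\eps(E)}$ in the proof of item~\ref{itm:commutativity_P_U_d}), but never isolates the product structure of $\Pi_E$ that makes your argument work uniformly.

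One minor slip: the claim ``$\widetilde R_q=R_q$ for $q\in[d]$ under either choice of base $\sigma$-algebra'' is only correct when $d+1\in E$. When $d+1\notin E$ the base is $\f_{\gr{0}}$, and the $d$-dimensional coordinate projection is $\widetilde R_q=R_qR_{d+1}$, not $R_q$. This does not harm your argument, however: in that case $E\setminus\ens{d+1}=E$ and one checks directly that $\widetilde\Pi_E=\Pi_E^{(d+1)}$ (the extra $R_{d+1}$ factors from the $\widetilde R_q$'s are absorbed since $R_{d+1}$ already appears in $\Pi_E^{(d+1)}$), so item~\ref{itm:commutativity_PU} is literally item~\ref{itm:semi_group}, exactly as the paper observes.
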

 
 \begin{proof}
  \begin{enumerate}
   \item It follows from the fact that for any function $h$ and 
   any sub-$\sigma$-algebra $\mathcal G$ of $\f$, $\norm{h-\E{h\mid \mathcal G}}
   \leq \norm{h}$ combined with an induction argument.
   \item That $P_E^{\gr{k}}f$ belongs 
 to $\h_E$  follows from the definition of $P_E$. Let $E\subsetneq 
 [d]$. By \eqref{eq:definition_de_Pe} and complete commutativity of 
 $\pr{T^{-\gr{i}}\f_{\gr{0}}}_{\gr{i}\in \Z^d}$, we have 
 \begin{equation}\label{eq:Intermediate_step_proof_item2}
   P_E^{\gr{j}}\circ P_E^{\gr{k}}f=
   \sum_{I,J\subset E}\pr{-1}^{\abs{I}+\abs{J}}
   \E{U^{\pr{\gr{k}+\gr{j}}\cdot\eps\pr{E}} f
   \mid \f_{\min\ens{ \gr{j}\cdot\eps\pr{E}+\infty\gr{1_I}, 
   \infty\gr{1_J}}}}.
 \end{equation}
 Moreover, since $I$ is contained in $E$, we have, by definition of 
 $\eps\pr{E}$ that 
 \begin{equation}
  \min\ens{ \gr{j}\cdot\eps\pr{E}+\infty\gr{1_I}, 
   \infty\gr{1_J}}=
   \min\ens{ -\gr{j_{E\setminus I}} +\infty\gr{1_I}, 
   \infty\gr{1_J}}.
 \end{equation}
If $E\setminus I$ contains some $i_0$, then 
\begin{equation*}
  \sum_{J\subset E}\pr{-1}^{ \abs{J}}
   \E{U^{\pr{\gr{k}+\gr{j}}\cdot\eps\pr{E}} f
   \mid \f_{\min\ens{ \gr{j}\cdot\eps\pr{E}+\infty\gr{1_I}, 
   \infty\gr{1_J}}}} =\sum_{J\subset E\setminus\ens{i_0}}
   \pr{-1}^{ \abs{J}}a_J,
\end{equation*}
where 
\begin{equation*}
a_J:=  \E{ U^{\pr{\gr{k}+\gr{j}}\cdot\eps\pr{E}} f
   \mid \f_{  \min\ens{\infty\gr{1_I}-\gr{j_{E\setminus I}}  ;
   \infty\gr{1_J}
   }}}
   - \E{ U^{\pr{\gr{k}+\gr{j}}\cdot\eps\pr{E}} f
   \mid \f_{  \min\ens{\infty\gr{1_I}-\gr{j_{E\setminus I}}  ;
   \infty\gr{1_{J\cup\ens{i_0}}}
   }}}
\end{equation*}

and the latter term equals $0$. Consequently, in \eqref{eq:Intermediate_step_proof_item2}, 
only the term where $I=E$ appears, which gives \eqref{eq:semi_group}. 
Assume that $E=[d]$. Define for $\gr{i}\in \N^d$,
$Q_1^{\gr{i}}f:=U^{-\gr{i}}\pr{f-\E{f\mid \f_{\infty\gr{1_{[d]}}}}}$
and $Q_2^{\gr{i}}f:=\sum_{J\subset [d]}\pr{-1}^{\abs{J}}
\E{U^{-\gr{j}}f\mid \f_{\infty\gr{1_J}}}$. Then $P_{[d]}^{\gr{i}}f
=Q_1^{\gr{i}}+Q_2^{\gr{i}}$, $Q_1^{\gr{j}+\gr{k}}f=Q_1^{\gr{j}}
Q_1^{\gr{k}}\pr{f}$ and by similar arguments as in the case where 
$E\subsetneq [d]$, $Q_2^{\gr{j}+\gr{k}}f=Q_2^{\gr{j}}
Q_2^{\gr{k}}\pr{f}$. Therefore, establishing \eqref{eq:semi_group} 
reduces $Q_1^{\gr{j}}Q_2^{\gr{k}}f=
Q_2^{\gr{j}}Q_1^{\gr{k}}f=0$ which holds because $\E{U^{\gr{a}}
Q_1^{\gr{b}}f\mid \f_{\infty\gr{1_{[d]}}}}=0$ for any $\gr{a}\in \Z^d$ 
and $\gr{b}\in\N^d$.

   \item When $d+1\notin E$, this follows from item~\ref{itm:semi_group}. 
   Assume now that $d+1\in E$. For any function $h$, the following 
   equality holds:
   \begin{equation}
    \til{P^{\gr{j}}_{d,E\setminus \ens{d+1}}}
    =P_{d+1,E}^{\gr{j}}h
    +\sum_{J\subset E\setminus \ens{d+1}}
    \pr{-1}^{\abs{J}+\abs{E}}
    \E{U^{\gr{j}\cdot \eps\pr{E}}h\mid \f_{\infty\gr{1_J}}},
   \end{equation}
  hence, by item~\ref{itm:semi_group}, it suffices to establish that 
  \begin{equation}
   \sum_{J\subset E\setminus \ens{d+1}}
    \pr{-1}^{\abs{J}+\abs{E}}
    \E{U^{\gr{j}\cdot \eps\pr{E}}P_{d+1,E}^{\gr{e_{d+1}}}f
    \mid \f_{\infty\gr{1_J}}}=0,
  \end{equation}
 which follows from the fact that $U^{\gr{j}\cdot \eps\pr{E}}P_{d+1,E}^{\gr{e_{d+1}}}f$ 
 belongs to $\h_{d+1,E}$.
 \item Noticing that $P_{d+1,E}^{\gr{k}}f=
 P_{d+1,E}^{\gr{0}}\pr{U^{\gr{k}\cdot\eps{E}}f}$, we derive that
 \begin{equation}
  P_{d+1,E}^{j\gr{e_{d+1}}} U_{d+1}^{s\pr{E}}f=
  P_{d+1,E}^{\gr{0}}\pr{U^{j\gr{e_{d+1}}\cdot \eps\pr{E}   } U_{d+1}^{s\pr{E}}
  f}
  =P_{d+1,E}^{\gr{0}}\pr{U^{\pr{j+1}\gr{e_{d+1}}\cdot \eps\pr{E}   }  
  f},
 \end{equation}
 which entails the wanted result by an application of item~\ref{itm:semi_group}.
  \end{enumerate}

 \end{proof}

\subsubsection{Intermediate steps}

The proof of Theorem~\ref{thm:OMC_decomposition} will
require the following lemmas.
  
  \begin{Lemma}\label{lem:decomposition_gE}
  Let $E\subsetneq [d]$ and let $h$ be an integrable 
  $\f_{\infty\gr{1_I}}$-measurable function such that $\E{h\mid\f_{\infty\gr{J}}}
  =0$ for each $J\subsetneq I$. Then the function 
  $g_E:=\prod_{q=1}^d\pr{\Id-P_E^{\gr{e_q}}}h$ admits the decomposition 
  \begin{equation}\label{eq:OMC_decomposition_PE}
  g_E:= m_k^E+\sum_{\substack{J\subset [d]\\ J\neq \emptyset }}
    \prod_{j\in J}\pr{\Id-U_j}m_{k,J}^E,
   \end{equation}
  where  $\pr{m_k^E\circ T^{\mathbf i}}_{\mathbf i\in \Z^d}$ is an 
  orthomartingale differences random field with respect to the filtration 
  $\pr{T^{-\mathbf i}\f_{\gr{0}}}_{\mathbf i\in \Z^d}$.
  and for each nonempty subset $J$ of $[d]$ such that $J\neq [d]$, the random field 
  $\pr{m_{k,J}^E\circ T^{\gr{i_J} }}_{\mathbf i\in 
  \Z^d}$  is an
  orthomartingale differences random field with respect to the filtration 
  $\pr{T^{-\gr{i_J}}\f_{\infty\gr{1_{J^c}}    } }_{\mathbf i\in \Z^d  }$.
  \end{Lemma}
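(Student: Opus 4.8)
The plan is to reduce the $d$-dimensional statement to $d$ successive one-dimensional martingale--coboundary decompositions, one carried out in each direction, and then to read off the pieces from the resulting expansion. The argument proceeds by induction on $d$, the building block being a single-direction identity, and complete commutativity of $\pr{T^{-\gr{i}}\f_{\gr{0}}}_{\gr{i}\in\Z^d}$ is what makes all the commutations legitimate.

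First I would record the one-direction identity. Fix $q\in[d]$ and $h\in\h_E$. Writing $P_E^{\gr{e_q}}f=P_E^{\gr{0}}\pr{U^{\gr{e_q}\cdot\eps\pr{E}}f}$ as in the proof of Proposition~\ref{prop:Pq}, observing that $P_E^{\gr{0}}$ is the identity on $\h_E$, and contracting the iterated conditional expectations by complete commutativity, one obtains an identity of the shape
\begin{equation*}
 \pr{\Id-P_E^{\gr{e_q}}}h=d_q(h)+\pr{\Id-U_q^{s\pr{q}}}c_q(h),
\end{equation*}
where $s\pr{q}=-1$ if $q\in E$ and $s\pr{q}=1$ otherwise, $c_q(h)$ is the conditional expectation of $h$ onto the $\sigma$-algebra obtained by lowering the $q$th level by one, and $d_q(h)$ is the corresponding forward (resp. reverse, when $q\in E$) martingale increment in direction $q$. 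This is the classical martingale--coboundary splitting; Proposition~\ref{prop:Pq}\eqref{itm:contraction} guarantees that all pieces are square integrable. The reverse case $q\in E$, in which the shift runs toward $+\infty$, is handled through the $Q_1,Q_2$ splitting already used in the proof of Proposition~\ref{prop:Pq}\eqref{itm:semi_group}, which isolates the reverse-martingale part.

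Next I would iterate. Applying the one-direction identity in the last direction to $g_E=\prod_{q=1}^d\pr{\Id-P_E^{\gr{e_q}}}h$ splits $g_E$ into a term carrying $d_d$ and a term carrying $\pr{\Id-U_d^{s(d)}}c_d$. Proposition~\ref{prop:Pq}\eqref{itm:commutativity_PU} and \eqref{itm:commutativity_P_U_d} then let me replace the remaining factors $\prod_{q<d}\pr{\Id-P_{d,E}^{\gr{e_q}}}$ by the $(d-1)$-dimensional operators $\prod_{q<d}\pr{\Id-\widetilde P_{d-1,E\setminus\ens{d}}^{\gr{e_q}}}$, the base $\sigma$-algebra being $\f_{\gr{0}}$ when $d\notin E$ and $\f_{\infty\gr{1_{\ens{d}}}}$ when $d\in E$, and to commute the coboundary operator $\pr{\Id-U_d^{s(d)}}$ to the front. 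Feeding the two resulting functions into the induction hypothesis for dimension $d-1$ yields the full expansion $g_E=\sum_{J\subset[d]}\prod_{j\in J}\pr{\Id-U_j}m_J^E$, in which $m^E:=m_\emptyset^E$ is the unique summand where every direction contributes its martingale increment.

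Finally I would identify the pieces. Contracting the product of conditional expectations via complete commutativity (so that a product of the $c_j$ collapses to a single $\E{\cdot\mid\f_{-\gr{1_J}}}$), one verifies that $m_\emptyset^E\in\h_{[d]}$, i.e. that $\pr{m^E\circ T^{\gr{i}}}_{\gr{i}\in\Z^d}$ is an orthomartingale difference field, and that each remaining summand is a coboundary in a subset of directions applied to a function that is $\f_{\infty\gr{1_{J^c}}}$-measurable and is annihilated by lowering any single one of the complementary levels, which is exactly the orthomartingale-difference property along the sub-action indexed by $\gr{i_J}$. I expect the \emph{main obstacle} to be precisely this last bookkeeping: tracking simultaneously the forward directions ($q\notin E$) and the reverse directions ($q\in E$), matching the martingale/coboundary directions of each of the $2^d$ mixed terms with the indexing displayed in the statement, and checking that every term lands in the asserted space. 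Complete commutativity is used at each of these points—both to collapse the iterated conditional expectations and to push the coboundary operators past the remaining projections—and is the one hypothesis that cannot be relaxed.
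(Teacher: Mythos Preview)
Your approach is different from the paper's and, while plausible, is considerably more laborious.

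The paper does not induct on $d$. Instead it expands $g_E=\prod_{q=1}^d\pr{\Id-P_E^{\gr{e_q}}}h=\sum_{F\subset[d]}(-1)^{|F|}P_E^{\gr{1_F}}h$ in one shot, substitutes the definition of $P_E^{\gr{1_F}}$, swaps the two sums and resums over $F$ to get
\[
 g_E=\sum_{J\subset E}(-1)^{|J|}\E{\textstyle\prod_{q=1}^d\pr{\Id-U_q^{\eps(E)_q}}h\;\Big|\;\f_{\infty\gr{1_J}}},
\]
then factors the coboundaries $\prod_{j\in J}(\Id-U_j)$ out of each conditional expectation (this is where complete commutativity is used, once). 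What remains for each $J$ is a term of the form $\E{\prod_{q\notin J}(\Id-U_q)H\mid\f_{\infty\gr{1_J}}}$, and the paper disposes of all such terms uniformly by a separate short lemma (Lemma~\ref{lem:simplified_decomposition}), which performs the adapted/non-adapted splitting in the remaining directions simultaneously via $\Id-Q_q=(\Id-U_q^{-1}Q_q)+(U_q^{-1}-\Id)Q_q$. No induction, no case distinction on $q\in E$ versus $q\notin E$ at the iterative stage.

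Your route---do the classical one-direction martingale--coboundary split, peel off direction $d$, and recurse---can certainly be made to work, and your one-direction identity $(\Id-P_E^{\gr{e_q}})h=d_q(h)+(\Id-U_q^{s(q)})c_q(h)$ is correct once one notes that on $\h_E$ only the $J=E$ term of $P_E^{\gr{e_q}}$ survives. But you then have to check, at each step of the induction, that $d_d(h)$ and $c_d(h)$ lie in the correct $(d-1)$-dimensional $\widetilde\h$-space and that $\prod_{q<d}(\Id-P_{d,E}^{\gr{e_q}})$ really coincides with $\prod_{q<d}(\Id-\widetilde P_{d-1,E\setminus\{d\}}^{\gr{e_q}})$ on those functions; Proposition~\ref{prop:Pq}\eqref{itm:commutativity_PU}--\eqref{itm:commutativity_P_U_d} are stated for compositions with $P^{\gr{e_{d+1}}}$, not for $d_d$ or $c_d$ separately, so this needs an extra line of argument. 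The paper's global expansion sidesteps all of this bookkeeping: the forward/reverse distinction is absorbed into the single symbol $\eps(E)$, and the 2${}^d$ mixed terms never have to be tracked individually.
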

  
  \begin{Lemma}\label{lem:decomposition_gd}
   Let $h$ be an integrable function. Then the function 
   $g_{[d]}:=\prod_{q=1}^d\pr{\Id-P_{[d]}^{\gr{e_q}}}h$ admits the decomposition 
  \begin{equation}\label{eq:OMC_decomposition_Pd}
  g_{[d]}:= m_k^{[d]}+\sum_{\substack{J\subset [d]\\ J\neq \emptyset }}
    \prod_{j\in J}\pr{\Id-U_j}m_{k,J}^{[d]},
   \end{equation}
  where  $\pr{m_k^{[d]}\circ T^{\mathbf i}}_{\mathbf i\in \Z^d}$ is an 
  orthomartingale differences random field with respect to the filtration 
  $\pr{T^{-\mathbf i}\f_{\gr{0}}}_{\mathbf i\in \Z^d}$.
  and for each nonempty subset $J$ of $[d]$ such that $J\neq [d]$, the random field 
  $\pr{m_{k,J}^{[d]}\circ T^{\gr{i_J} }}_{\mathbf i\in 
  \Z^d}$  is an
  orthomartingale differences random field with respect to the filtration 
  $\pr{T^{-\gr{i_J}}\f_{\infty\gr{1_{J^c}}    } }_{\mathbf i\in \Z^d  }$.
  \end{Lemma}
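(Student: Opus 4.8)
The plan is to follow the template of Lemma~\ref{lem:decomposition_gE}, now with $E=[d]$; the only genuinely new feature is that $h$ is merely integrable and is \emph{not} assumed to lie in $\h_{[d]}$. First I would expand the defining product. The semigroup relation of item~\ref{itm:semi_group} of Proposition~\ref{prop:Pq}, which its proof establishes on all of $\mathbb L^1$ through the $Q_1$–$Q_2$ splitting, lets the factors of $\prod_{q=1}^d\pr{\Id-P_{[d]}^{\gr{e_q}}}$ combine via $\prod_{q\in A}P_{[d]}^{\gr{e_q}}=P_{[d]}^{\gr{1_A}}$, so that the product collapses to the alternating sum
\begin{equation*}
 g_{[d]}=h+\sum_{\emptyset\neq A\subset [d]}\pr{-1}^{\abs{A}}P_{[d]}^{\gr{1_A}}h .
\end{equation*}
Each summand $P_{[d]}^{\gr{1_A}}h$ lies in $\h_{[d]}$ by the range property of item~\ref{itm:semi_group}, whereas $h$ itself does not; this exhibits the whole obstruction as the non-$\f_{\infty\gr{1_{[d]}}}$-measurable part of $h$. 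Writing $P_{[d]}^{\gr{i}}=Q_1^{\gr{i}}+Q_2^{\gr{i}}$ as in the proof of item~\ref{itm:semi_group}, the summand $Q_1$ absorbs $h-\E{h\mid\f_{\infty\gr{1_{[d]}}}}$ while $Q_2$ behaves exactly like the operator treated in Lemma~\ref{lem:decomposition_gE}, which is why no $\h_{[d]}$-hypothesis on $h$ is needed.

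Next I would peel off the coboundaries one coordinate at a time. For a fixed direction $q$ I would establish a single-coordinate identity of the form
\begin{equation*}
 \pr{\Id-P_{[d]}^{\gr{e_q}}}\phi=A_q\pr{\phi}+\pr{\Id-U_q}C_q\pr{\phi},
\end{equation*}
where $A_q\pr{\phi}$ carries the orthomartingale-difference structure in the $q$th direction and $C_q\pr{\phi}$ is a remainder. The existence of such a splitting rests on the commutation relations of items~\ref{itm:commutativity_PU} and~\ref{itm:commutativity_P_U_d} of Proposition~\ref{prop:Pq}, which describe precisely how applying $U_q^{-1}$ before projecting shifts $P_{[d]}$ by one step in coordinate $q$ and thereby produces the telescoping that creates the factor $\pr{\Id-U_q}$. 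Because these per-coordinate operators commute across distinct coordinates by complete commutativity of $\pr{T^{-\gr{i}}\f_{\gr{0}}}_{\gr{i}\in\Z^d}$, I can iterate the splitting over $q=1,\dots,d$, expand, and group the terms according to the set $J$ of coordinates that contribute a factor $\pr{\Id-U_j}$. The term with $J=\emptyset$ is the orthomartingale difference $m_k^{[d]}$, and each $J\neq\emptyset$ produces $\prod_{j\in J}\pr{\Id-U_j}m_{k,J}^{[d]}$, which is the asserted decomposition~\eqref{eq:OMC_decomposition_Pd}.

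It then remains to verify the measurability and martingale properties. That $\pr{m_k^{[d]}\circ T^{\gr{i}}}_{\gr{i}\in\Z^d}$ is an orthomartingale difference random field with respect to $\pr{T^{-\gr{i}}\f_{\gr{0}}}_{\gr{i}\in\Z^d}$ follows because, once all coboundaries have been removed, the leading term lands in $\h_{[d]}$ by item~\ref{itm:semi_group}, and this is exactly the defining condition $\E{m_k^{[d]}\mid\f_{\infty\gr{1_{E'}}}}=0$ for $E'\subsetneq[d]$. For $\emptyset\neq J\subsetneq[d]$, the coordinates in $J^c$ still carry an orthomartingale-difference structure while those in $J$ have been converted into coboundaries, so $m_{k,J}^{[d]}$ is $\f_{\infty\gr{1_{J^c}}}$-measurable and orthogonal to the relevant sub-$\sigma$-algebras, yielding the stated property of $\pr{m_{k,J}^{[d]}\circ T^{\gr{i_J}}}_{\gr{i}\in\Z^d}$ relative to $\pr{T^{-\gr{i_J}}\f_{\infty\gr{1_{J^c}}}}_{\gr{i}\in\Z^d}$.

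The main obstacle I anticipate is the single-coordinate identity itself, and more precisely checking that $C_q\pr{\phi}$ (hence each $m_{k,J}^{[d]}$) is a genuine coboundary with a square-integrable potential and that $A_q\pr{\phi}$ is itself square integrable: this is where the contraction estimate of item~\ref{itm:contraction} together with the semigroup identity must control the telescoping sums, and where complete commutativity is indispensable for interchanging the per-coordinate constructions. The bookkeeping of which conditional expectations survive in each $m_{k,J}^{[d]}$, namely showing $\E{m_{k,J}^{[d]}\mid\f_{\infty\gr{1_I}}}=0$ for $I\subsetneq J^c$, is the most delicate step and runs parallel to the corresponding verification in Lemma~\ref{lem:decomposition_gE}, the difference being only the appearance of the $Q_1$ term handling the part of $h$ that is not $\f_{\infty\gr{1_{[d]}}}$-measurable.
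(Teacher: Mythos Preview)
Your plan is sound in spirit but takes a different route from the paper, and one reference is misplaced.

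The paper's proof is very short: it expands $g_{[d]}$ directly from the definition \eqref{eq:definition_de_Pd} of $P_{[d]}^{\gr{1_F}}$, obtaining
\[
 g_{[d]}=\prod_{q=1}^d\pr{\Id-U_q^{-1}}h+\sum_{J\subsetneq [d]}(-1)^{|J|+d}\,\E{\prod_{q=1}^d\pr{\Id-U_q^{-1}}h\mid\f_{\infty\gr{1_J}}},
\]
and then invokes Lemma~\ref{lem:simplified_decomposition} on each conditional-expectation term (after pulling out the factors $\pr{\Id-U_q}$ for $q\in J$, which commute with $\E{\,\cdot\,\mid\f_{\infty\gr{1_J}}}$). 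The raw term $\prod_q(\Id-U_q^{-1})h=(-1)^d\prod_q(\Id-U_q)\bigl(\prod_q U_q^{-1}h\bigr)$ contributes directly to the $J=[d]$ piece. All of the per-coordinate peeling you describe is already packaged inside Lemma~\ref{lem:simplified_decomposition}, whose proof uses exactly the splitting $\Id-Q_q=(\Id-U_q^{-1}Q_q)+(U_q^{-1}-\Id)Q_q$ that your single-coordinate identity is trying to reproduce.

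Your proposal instead re-derives this machinery by iterating an identity $\pr{\Id-P_{[d]}^{\gr{e_q}}}\phi=A_q\phi+\pr{\Id-U_q}C_q\phi$ over $q$. This is viable, but your appeal to items~\ref{itm:commutativity_PU} and~\ref{itm:commutativity_P_U_d} of Proposition~\ref{prop:Pq} is off: those items relate the $(d+1)$-dimensional operators $P_{d+1,E}$ to the $d$-dimensional ones $\widetilde P_{d,E\setminus\{d+1\}}$ and are used in the induction for the maximal inequality, not here. The relevant algebra is the one in the proof of Lemma~\ref{lem:simplified_decomposition}. If you want to carry out your plan, the cleanest way is to recognise that $P_{[d]}^{\gr{0}}$ is the projection onto $\h_{[d]}$ (so $P_{[d]}^{\gr{0}}$ annihilates every $\f_{\infty\gr{1_J}}$-measurable function for $J\subsetneq[d]$), write $P_{[d]}^{\gr{i}}=P_{[d]}^{\gr{0}}U^{-\gr{i}}$, and then follow the coordinate-wise telescoping exactly as in Lemma~\ref{lem:simplified_decomposition}; but at that point you have essentially reproduced the paper's argument.
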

  
  \begin{Lemma}\label{lem:simplified_decomposition}
   For any square integrable function $H$ and any $K\subset [d]$, 
   the function 
   \begin{equation}
    F:=\E{\prod_{q\in [d]\setminus K}\pr{\Id-U_q}H\mid 
   \f_{\infty\gr{1_K}}}
   \end{equation}
    admits the decomposition
   \begin{equation}
    F=\sum_{K'\subset [d]\setminus K}\prod_{q\in K'}\pr{\Id-U_q}m_{K'},
   \end{equation}
  where $m_{K'}$ is $\f_{\infty\gr{1_{[d]\setminus K'}}}$-measurable 
  and if $S\subsetneq K'$, then $\E{m_{K'}\mid \f_{\infty\gr{1_S}}}=0$.
  \end{Lemma}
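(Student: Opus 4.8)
The plan is to turn the $d$-dimensional statement into a product of one-dimensional martingale--coboundary splittings, one per direction in $L:=[d]\setminus K$. First I would use complete commutativity to factor the conditioning: since $\f_{\infty\gr{1_K}}$ is free exactly in the directions of $K$ and capped at level $0$ in the directions of $L$, one has $\E{\,\cdot\mid\f_{\infty\gr{1_K}}}=\prod_{q\in L}\mathbb E_0^{(q)}$, where the operators $\mathbb E_0^{(q)}$ commute with one another and with every $U_{q'}$ for $q'\neq q$. As $\prod_{q\in L}\pr{\Id-U_q}$ also factors over directions, I may pair the operators and write $F=\prod_{q\in L}\pr{\mathbb E_0^{(q)}\pr{\Id-U_q}}H$, a commuting product of single-direction operators. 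This factorization is the only place where complete commutativity is essential, and it is what licenses the direction-by-direction argument below.

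Next I would record the one-dimensional identity driving the decomposition. Because $T_q$ is measure preserving one has the shift relation $\mathbb E_0^{(q)}U_q=U_q\mathbb E_{-1}^{(q)}$, whence
\begin{equation*}
 \mathbb E_0^{(q)}\pr{\Id-U_q}=\pr{\Id-U_q}\mathbb E_0^{(q)}+U_q\pi_0^{(q)},
\end{equation*}
with $\pi_0^{(q)}=\mathbb E_0^{(q)}-\mathbb E_{-1}^{(q)}$. The first summand is a genuine coboundary $\pr{\Id-U_q}$ applied to the level-$0$ function $\mathbb E_0^{(q)}\pr{\cdot}$, while the second summand is annihilated by $\mathbb E_0^{(q)}$, since $\mathbb E_0^{(q)}U_q\pi_0^{(q)}=U_q\mathbb E_{-1}^{(q)}\pi_0^{(q)}=0$; it is the martingale-difference part, now supported one level into the future of direction $q$.

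Inserting this identity into each factor of $F=\prod_{q\in L}\pr{\mathbb E_0^{(q)}\pr{\Id-U_q}}H$ and expanding, commutativity lets me choose the coboundary summand on a set $K'\subset L$ of directions and the martingale-difference summand on $L\setminus K'$, and then pull every $\pr{\Id-U_q}$, $q\in K'$, to the front. This yields exactly
\begin{equation*}
 F=\sum_{K'\subset L}\prod_{q\in K'}\pr{\Id-U_q}\,m_{K'},\qquad
 m_{K'}:=\prod_{q\in K'}\mathbb E_0^{(q)}\prod_{q\in L\setminus K'}\pr{U_q\pi_0^{(q)}}H.
\end{equation*}
The required measurability is then immediate from the levels of the factors: $\mathbb E_0^{(q)}$ forces level $\leq 0$ in the directions $q\in K'$, while the directions of $L\setminus K'$ and of $K$ are left free, so $m_{K'}$ is $\f_{\infty\gr{1_{[d]\setminus K'}}}$-measurable.

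The step I expect to be the crux is the centering $\E{m_{K'}\mid\f_{\infty\gr{1_S}}}=0$ for $S\subsetneq K'$. Conditioning on $\f_{\infty\gr{1_S}}$ amounts to applying $\prod_{q\notin S}\mathbb E_0^{(q)}$, and because $m_{K'}$ is already at level $0$ in the directions of $K'$ this collapses to $\prod_{q\in[d]\setminus K'}\mathbb E_0^{(q)}m_{K'}$. When $K'\subsetneq L$ there is a direction $q_0\in L\setminus K'$ on which $m_{K'}$ carries the factor $U_{q_0}\pi_0^{(q_0)}$, and the corresponding $\mathbb E_0^{(q_0)}$ annihilates it via $\mathbb E_{-1}^{(q_0)}\pi_0^{(q_0)}=0$ (all other factors commuting past); this gives the vanishing. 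The delicate case is the maximal term $K'=L$, where no direction of $L\setminus K'$ survives to provide this cancellation and one is left with $\prod_{q\in K}\mathbb E_0^{(q)}m_L=\E{H\mid\f_{\gr{0}}}$; this top term must be treated as the exceptional (reversed-orthomartingale) summand, exactly as the analogous $J=[d]$ term is singled out in Lemma~\ref{lem:decomposition_gE}. Making that last reconciliation precise -- and checking that the directionwise conditional expectations legitimately commute past the operators defining $m_{K'}$, which again rests on complete commutativity -- is where the real bookkeeping lies.
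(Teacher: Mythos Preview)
Your plan and the paper's share the same skeleton: write $F$ as a commuting product of one-direction operators over $L=[d]\setminus K$, split each factor into a ``coboundary'' piece carrying $\Id-U_q$ and a remaining ``martingale'' piece, then expand over subsets $K'\subset L$. The specific split is different. You use the marginal conditionings $\mathbb E_0^{(q)}$ and the forward identity $\mathbb E_0^{(q)}(\Id-U_q)=(\Id-U_q)\mathbb E_0^{(q)}+U_q\pi_0^{(q)}$; the paper instead introduces $Q_qf:=\E{U_qf\mid\f_{\infty\gr{1_K}}}$, rewrites $F=\prod_{q\in L}(\Id-Q_q)H$, and splits backward via $\Id-Q_q=(\Id-U_q^{-1}Q_q)+(\Id-U_q)U_q^{-1}Q_q$, arriving at $m_{K'}=\prod_{q\in K'}U_q^{-1}Q_q\prod_{q'\in L\setminus K'}(\Id-U_{q'}^{-1}Q_{q'})H$ rather than your $\prod_{q\in K'}\mathbb E_0^{(q)}\prod_{q'\in L\setminus K'}U_{q'}\pi_0^{(q')}H$. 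Your choice has the advantage that the commutation of $\mathbb E_0^{(q)}$, $U_{q'}$, $\pi_0^{(q')}$ across distinct directions is immediate from complete commutativity, and it makes the centering transparent for $K'\subsetneq L$: a surviving factor $U_{q_0}\pi_0^{(q_0)}$ with $q_0\in L\setminus K'$ is annihilated by the corresponding $\mathbb E_0^{(q_0)}$.

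You are right that the top term $K'=L$ is the crux: with your $m_L=\E{H\mid\f_{\infty\gr{1_K}}}$ one gets $\E{m_L\mid\f_{\infty\gr{1_S}}}=\E{H\mid\f_{\gr 0}}$ for every $S\subsetneq L$, which need not vanish, so the centering clause of the lemma fails there. The paper's proof does not address the centering at all --- it simply stops at ``which gives the wanted decomposition'' --- and a short computation shows the same obstruction for its $m_L$. For the only uses of the lemma (Lemmas~\ref{lem:decomposition_gE}--\ref{lem:decomposition_gd}) this is harmless: the $K'=L$ summand, after being multiplied by the outer $\prod_{j\in J}(\Id-U_j)$, lands in the full-coboundary slot $J=[d]$ of \eqref{eq:OMC_decomposition_PE}--\eqref{eq:OMC_decomposition_Pd}, for which no martingale-difference property is asserted. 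So your instinct to single out this term as exceptional is exactly what is needed downstream; the centering clause in the lemma is stated a shade too broadly, and your write-up should just say so rather than leave it as an unresolved ``reconciliation''.
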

  \begin{proof}
   Define for $\gr{i}\in \N^d$,
   \begin{equation}
    Q^{\gr{i}}f:=\E{U^{\gr{i}}f\mid  \f_{\infty\gr{1_K}}}
   \end{equation}
  and $Q_q^if:=Q^{i\gr{e_q}}f$. Then $F=\prod_{q\in [d]\setminus K}\pr{\Id-
  Q_q}H$. Moreover, since $\Id-Q_q=\Id-U_q^{-1}Q_q+U_q^{-1}Q_q-Q_q$ 
  and $Q_{q'}$ commutes with $U_q^{-1}Q_q$ for $q\neq q'$, we derive that 
  \begin{equation}
   F=\sum_{K'\subset [d]\setminus K}
   \prod_{q\in K'}
   \pr{U_q^{-1}Q_q-Q_q}\prod_{q'\in ([d]\setminus K)\setminus K'}
   \pr{\Id-U_{q'}^{-1}Q_{q'}}H.
  \end{equation}
 If $q\neq q'$, then $U_q^{-1}$ commutes with $Q_{q'}$. 
 Therefore, the following equalities hold
 \begin{align}
   F&=\sum_{K'\subset [d]\setminus K} 
   \prod_{q\in K'}
   \pr{U_q^{-1} -\Id}\prod_{q\in K'}Q_q
   \prod_{q'\in ([d]\setminus K)\setminus K'}\pr{\Id-U_{q'}^{-1}Q_{q'}}H\\
   &=\sum_{K'\subset [d]\setminus K} 
   \prod_{q\in K'}
   \pr{\Id-U_q}\prod_{q\in K'}U_q^{-1}Q_q
   \prod_{q'\in ([d]\setminus K)\setminus K'}\pr{\Id-U_{q'}^{-1}Q_{q'}}H,
  \end{align}
  which gives the wanted decomposition.
  \end{proof}

  \begin{proof}[Proof of Lemma~\ref{lem:decomposition_gE}]
   Observe that 
   \begin{align}
    g_E&=\sum_{F\subset [d]} \pr{-1}^{\abs{F}}P_E^{\gr{1_F}}h\\
    &=\sum_{F\subset [d]} \pr{-1}^{\abs{F}}\sum_{J\subset E}
    \pr{-1}^{\abs{J}}\E{ 
    U^{\gr{1_F}\cdot \eps\pr{E}} h\mid \f_{\infty\gr{1_J}}
    }\\
    &=\sum_{J\subset E}
    \pr{-1}^{\abs{J}}\E{ \prod_{q=1}^d\pr{\Id-U_q^{\eps\pr{E}_q}}  
    h\mid \f_{\infty\gr{1_J}}
    }\\
    &=\sum_{J\subset E}
    \pr{-1}^{\abs{J}}\pr{-1}^{\abs{J\cap E}}\prod_{j\in J}\pr{\Id-U_j}
    \E{ \prod_{q\in [d]\setminus J }\pr{\Id-U_q^{\eps\pr{E}_q}}  
    \prod_{l \in J}U_lh\mid \f_{\infty\gr{1_J}}
    }.
   \end{align}
  We then apply Lemma~\ref{lem:simplified_decomposition} to each $J\subset E$ 
  with $H$ such that $\prod_{q\in [d]\setminus J }\pr{\Id-U_q^{\eps\pr{E}_q}}  
    \prod_{l \in J}U_l h=\prod_{q\in [d]\setminus J}
    \pr{\Id-U_q}H$.
  \end{proof}
 \begin{proof}[Proof of Lemma~\ref{lem:decomposition_gd}]
  We start from the following inequalities:
  \begin{align}
   g_{[d]}&=\sum_{F\subset [d]} \pr{-1}^{\abs{F}}P_E^{\gr{1_F}}h\\
    &=\sum_{F\subset [d]} \pr{-1}^{\abs{F}}\sum_{J\subsetneq E}
    \pr{-1}^{\abs{J}}\E{ 
    U^{\gr{1_F}\cdot \eps\pr{E}} h\mid \f_{\infty\gr{1_J}}
    }+\sum_{F\subset [d]} \pr{-1}^{\abs{F}}U^{-\gr{1_F}}h\\
    &=\sum_{J\subsetneq  E}
    \pr{-1}^{\abs{J}}\E{ \prod_{q=1}^d\pr{\Id-U_q^{\eps\pr{E}_q}}  
    h\mid \f_{\infty\gr{1_J}}
    }+\prod_{q=1}^d\pr{\Id-U_q^{-1}}h 
  \end{align}
 and we apply Lemma~\ref{lem:simplified_decomposition} to 
 each $J\subsetneq E$ and a $H$ such that $\prod_{q\in [d]\setminus J }\pr{\Id-U_q^{\eps\pr{E}_q}}  
    \prod_{l \in J}U_l^h=\prod_{q\in [d]\setminus J}
    \pr{\Id-U_q}H$.
 \end{proof}

 \subsection{Proof of Theorem~\ref{thm:CNS_orthomartingale_approximation}}

   \begin{Lemma}\label{lem:bounded_sums}
   Let $A_1,\dots, A_d$ be commuting operators from a closed 
   subspace $V$ of $\mathbb L^2$ to itself, and $A^{\gr{i}}:=
   A_1^{i_1}\dots A_d^{i_d}$. Let $F$ be a 
   function such that $\sup_{\gr{n}\in \N^d}
   \norm{\sum_{\gr{0}\imd \gr{i} \imd \gr{n}  }  A^{\gr{i} } F}<+\infty$. Then 
   there exists a function $h\in V$ such that 
   $F=\prod_{q=1}^d\pr{\Id-A_q}h$.
  \end{Lemma}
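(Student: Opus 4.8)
The plan is to adapt to the product situation the classical one-dimensional coboundary criterion: if $A$ is a bounded operator on a Hilbert space whose partial sums $\sum_{i=0}^{n}A^iF$ stay bounded, then $F=\pr{\Id-A}h$, where $h$ is a weak cluster point of the averages of these partial sums. I would carry this out simultaneously in all $d$ directions rather than by iterating one direction at a time.

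First I would record an elementary but crucial consequence of the hypothesis. Set $C:=\sup_{\gr n\in\N^d}\norm{\sum_{\gr 0\imd\gr i\imd\gr n}A^{\gr i}F}$. Then for \emph{every} subset $J\subset[d]$ and every $\gr n$, the sub-box sum $\prod_{q\in J}\pr{\sum_{i=0}^{n_q}A_q^i}F$ has $\mathbb L^2$-norm at most $C$: it is obtained from a full box sum by taking $n_q=0$ for the indices $q\notin J$, since $\sum_{i=0}^0 A_q^i=\Id$. This uniform control of \emph{all} sub-box sums, not just the full ones, is the extra ingredient needed in dimension $d$.

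Next I would introduce, for $\gr N\in\pr{\N^*}^d$, the averaged iterate
\[
 h_{\gr N}:=\frac1{\abs{\gr N}}\sum_{\gr 1\imd\gr n\imd\gr N}
 \prod_{q=1}^d\pr{\sum_{i=0}^{n_q-1}A_q^i}F .
\]
Each summand is a sub-box sum, so $\norm{h_{\gr N}}\le C$ and the family $\pr{h_{\gr N}}$ is bounded in the closed subspace $V$. Applying $\prod_{q}\pr{\Id-A_q}$ telescopes each factor, giving
\[
 \prod_{q=1}^d\pr{\Id-A_q}h_{\gr N}
 =\prod_{q=1}^d\pr{\Id-\frac1{N_q}\sum_{n=1}^{N_q}A_q^{\,n}}F
 =\sum_{J\subset[d]}\pr{-1}^{\abs J}\prod_{q\in J}\pr{\frac1{N_q}\sum_{n=1}^{N_q}A_q^{\,n}}F .
\]
The term $J=\emptyset$ equals $F$. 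For $J\neq\emptyset$ I would write $\sum_{n=1}^{N_q}A_q^n=\pr{\sum_{i=0}^{N_q}A_q^i}-\Id$ and expand, so that $\prod_{q\in J}\pr{\frac1{N_q}\sum_{n}A_q^n}F$ becomes $\pr{\prod_{q\in J}N_q}^{-1}$ times a signed sum of at most $2^{\abs J}$ sub-box sums; by the previous paragraph its norm is $\le 2^{\abs J}C/\prod_{q\in J}N_q$. Hence every $J\neq\emptyset$ contribution tends to $0$ as $\min\gr N\to+\infty$, and $\prod_q\pr{\Id-A_q}h_{\gr N}\to F$ in $\mathbb L^2$.

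Finally I would pass to the diagonal $\gr N=k\gr 1$: the bounded sequence $\pr{h_{k\gr 1}}_{k}$ admits a weakly convergent subsequence with limit $h$, and $h\in V$ because a closed subspace of a Hilbert space is weakly closed. Since $\prod_q\pr{\Id-A_q}$ is a bounded linear operator, it is weak-to-weak continuous, so along this subsequence $\prod_q\pr{\Id-A_q}h_{k\gr 1}\rightharpoonup\prod_q\pr{\Id-A_q}h$; but the same quantities converge to $F$ in norm, hence weakly, by the previous step. Uniqueness of weak limits yields $F=\prod_{q=1}^d\pr{\Id-A_q}h$. The only place where more than the boundedness hypothesis is used is this last weak continuity, which needs the $A_q$ to be bounded (in the application they are the contractions $P_E^{\gr{e_q}}$). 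The step I expect to be the main obstacle is the middle one, namely establishing the uniform bound on all sub-box sums and the resulting $\pr{\prod_{q\in J}N_q}^{-1}$ decay of the mixed Ces\`aro averages; the weak-compactness part is then routine.
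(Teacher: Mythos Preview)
Your proof is correct and follows essentially the same approach as the paper's. The paper defines the same diagonal Ces\`aro averages $h_n=h_{n\gr 1}$, asserts (without details) that $\prod_q(\Id-A_q)h_n\to F$ in $\mathbb L^2$, extracts a weakly convergent subsequence, and concludes by weak continuity of $\prod_q(\Id-A_q)$; your argument simply fills in the telescoping computation and the explicit $2^{\abs J}C/\prod_{q\in J}N_q$ bound that justify the paper's one-line claim.
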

  \begin{proof}
   We use the idea of proof of Lemma~5 in \cite{MR0096975}. We define 
   \begin{equation}
    f_n:= \frac 1{n^d}\sum_{\gr{1}\imd\gr{k}\imd \gr{n}}
     \sum_{\gr{0}\imd \gr{i} \imd
    \gr{k}-\gr{1}}\prod_{q=1}^d\pr{\Id-A_q}A^{\gr{i}}F
   \end{equation}
  Using the assumption on $F$, we derive that 
  \begin{equation}
   \lim_{n\to +\infty}\norm{F-f_n}=0.
  \end{equation}
 Moreover, defining 
 \begin{equation}
  h_n:=\frac 1{n^d}\sum_{\gr{1}\imd\gr{k}\imd \gr{n}}
     \sum_{\gr{0}\imd \gr{i} \imd
    \gr{k}-\gr{1}}A^{\gr{i}} F
 \end{equation}
we observe that the sequence $\pr{\norm{h_n}}_{n\geq 1}$ is bounded. 
Since $\mathbb L^2$ is reflexive, there exists a subsequence
$\pr{h_{n_k}}_{k\geq 1}$ which converges weakly in $\mathbb L^2$ 
to some $h$. Then the sequence $\pr{f_{n_k}}_{k\geq 1}=
\pr{\prod_{q=1}^d\pr{\Id-A_q}h_{n_k}}_{k\geq 1}$ 
converges weakly to $\prod_{q=1}^d\pr{\Id-A_q}h$. By uniqueness of the limit, 
 equality 
$F=\prod_{q=1}^d\pr{\Id-A_q}h$ holds. That $h$ belongs to $V$ follows from 
closedness of $V$.
  \end{proof}

\begin{proof}[Proof of Theorem~\ref{thm:OMC_decomposition}]
We prove sufficiency (necessity can be checked by direct computations). We
use the idea of proof of Proposition~4.1 in 
\cite{MR3178617}. Since $f= \sum_{E\subset  [d]}
 P_Ef$, it suffices to find an orthomartingale-coboundary 
decomposition for $P_Ef$ for any subset $E$ of $[d]$. 
To this aim, we apply Lemma~\ref{lem:bounded_sums} to the following setting: $V=\h_E$, 
$F=P_Ef$ and $A_q^i:=P_E^{i\gr{e_q}}$. We then conclude by 
Lemmas~\ref{lem:decomposition_gE} and \ref{lem:decomposition_gd}.
 
\end{proof}

  \subsubsection{Construction of the approximating martingale}

  The combination of Lemma~\ref{lem:decomposition_gE} and 
  \ref{lem:decomposition_gd}  shows that $B_k(f)$ admits an orthomartingale-coboundary 
  decomposition.

  \begin{Lemma}\label{lem:OMC_decomposition_of_Bk}
   For each integer $k\geqslant 1$ and 
   each integrable and measurable function $f\colon \Omega\to \R$, the 
   function $B_k(f)$ can be written in the following way:
   \begin{equation}\label{eq:OMC_decomposition}
    B_k(f)=m_k+\sum_{\substack{J\subset [d]\\ J\neq \emptyset }}
    \prod_{j\in J}\pr{\Id-U_j}m_{k,J},
   \end{equation}
  where 
   $\pr{m_k\circ T^{\mathbf i}}_{\mathbf i\in \Z^d}$ is an 
  orthomartingale differences random field with respect to the filtration 
  $\pr{T^{-\mathbf i}\f_{\gr{0}}}_{\mathbf i\in \Z^d}$.
  and for each nonempty subset $J$ of $[d]$ such that $J\neq [d]$, the random field 
  $\pr{m_{k,J}\circ T^{\gr{i_J} }}_{\mathbf i\in 
  \Z^d}$  is an
  orthomartingale differences random field with respect to the filtration 
  $\pr{T^{-\gr{i_J}}\f_{\infty\gr{1_{J^c}}    } }_{\mathbf i\in \Z^d  }$.
  \end{Lemma}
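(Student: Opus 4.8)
The plan is to prove Lemma~\ref{lem:OMC_decomposition_of_Bk} by expanding the definition of the blocking operator $B_k$ and reducing the problem, term by term, to the two decomposition lemmas already established. Recall from \eqref{eq:def_blocking_operator} that
\begin{equation*}
 B_k(f)=\frac 1{k^d}\sum_{\mathbf 1\preccurlyeq \mathbf i \preccurlyeq k\mathbf 1}\sum_{E\subset [d]}\prod_{q=1}^d\pr{\Id-P_E^{i_q\gr{e_q}}}P_E^{\gr{0}}\pr{f}.
\end{equation*}
First I would interchange the two finite sums and fix a subset $E\subset [d]$, so that it suffices to produce an orthomartingale-coboundary decomposition of the individual contribution
\begin{equation*}
 \frac 1{k^d}\sum_{\mathbf 1\preccurlyeq \mathbf i \preccurlyeq k\mathbf 1}\prod_{q=1}^d\pr{\Id-P_E^{i_q\gr{e_q}}}P_E^{\gr{0}}\pr{f}
\end{equation*}
for each $E$, since a finite sum of such decompositions is again of the required form (the orthomartingale parts add up, and coboundaries in a fixed direction $J$ add up to a coboundary in direction $J$).

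Next I would observe that $h:=P_E^{\gr{0}}f$ is exactly the kind of function to which the decomposition lemmas apply. Indeed, by item~\ref{itm:semi_group} of Proposition~\ref{prop:Pq}, $P_E^{\gr{0}}f$ belongs to $\h_E$; when $E\subsetneq [d]$ this means $h$ is $\f_{\infty\gr{1_E}}$-measurable with $\E{h\mid\f_{\infty\gr{1_{E'}}}}=0$ for $E'\subsetneq E$, which is precisely the hypothesis of Lemma~\ref{lem:decomposition_gE}; when $E=[d]$, Lemma~\ref{lem:decomposition_gd} applies to any integrable $h$. The key structural point, which I would verify using the semigroup property \eqref{eq:semi_group}, is that the single-direction factor satisfies $\pr{\Id-P_E^{i_q\gr{e_q}}}=\prod_{q=1}^d\pr{\Id-P_E^{\gr{e_q}}}$ composed with a telescoping geometric-type sum; more precisely, $\sum_{i_q=1}^{k}\pr{\Id-P_E^{i_q\gr{e_q}}}$ can be rewritten so that the averaged operator $\frac 1{k^d}\sum_{\mathbf 1\preccurlyeq \mathbf i \preccurlyeq k\mathbf 1}\prod_{q=1}^d\pr{\Id-P_E^{i_q\gr{e_q}}}$ is a finite linear combination (with coefficients depending on $k$) of operators of the form $\prod_{q=1}^d\pr{\Id-P_E^{\gr{e_q}}}P_E^{\gr{j}}$. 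Each such $P_E^{\gr{j}}h$ again lies in $\h_E$ by item~\ref{itm:semi_group}, so applying Lemma~\ref{lem:decomposition_gE} (resp.\ Lemma~\ref{lem:decomposition_gd}) to each summand yields the desired decomposition.

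The main obstacle I anticipate is bookkeeping rather than conceptual: I must check that the coboundary terms produced by the two lemmas carry the correct measurability and vanishing-conditional-expectation structure, namely that for each nonempty $J\subsetneq [d]$ the field $\pr{m_{k,J}\circ T^{\gr{i_J}}}$ is an orthomartingale difference field with respect to $\pr{T^{-\gr{i_J}}\f_{\infty\gr{1_{J^c}}}}$. Since Lemmas~\ref{lem:decomposition_gE} and~\ref{lem:decomposition_gd} already deliver decompositions of exactly this type for each building block $g_E:=\prod_{q=1}^d\pr{\Id-P_E^{\gr{e_q}}}h$, the remaining work is to confirm that summing these decompositions over the (finitely many) values of $\gr{j}$, over $E$, and with the normalizing weights $k^{-d}$, preserves both the orthomartingale-difference property of $m_k$ and the direction-$J$ coboundary structure. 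I would finish by collecting the $E=[d]$ orthomartingale term together with the $E\subsetneq[d]$ orthomartingale terms into a single $m_k$, and grouping all coboundaries by the direction set $J$ to obtain the stated form \eqref{eq:OMC_decomposition}.
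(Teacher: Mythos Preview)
Your proposal is correct and follows the same approach as the paper, which simply states that the combination of Lemma~\ref{lem:decomposition_gE} and Lemma~\ref{lem:decomposition_gd} yields the decomposition. Your telescoping identity $\pr{\Id-P_E^{i_q\gr{e_q}}}=\pr{\Id-P_E^{\gr{e_q}}}\sum_{l=0}^{i_q-1}P_E^{l\gr{e_q}}$ on $\h_E$ is precisely the missing link that reduces each $E$-summand to a single application of the relevant lemma with $h$ replaced by $\frac{1}{k^d}\sum_{\gr{1}\imd\gr{i}\imd k\gr{1}}\sum_{\gr{0}\imd\gr{l}\imd\gr{i}-\gr{1}}P_E^{\gr{l}}P_E^{\gr{0}}f\in\h_E$; the only minor caveat is that $P_E^{\gr{0}}f\in\h_E$ is not literally the content of item~\ref{itm:semi_group} (which assumes the input is already in $\h_E$), but it follows directly from the definition of $P_E^{\gr{0}}$.
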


  Considering the notations of Lemma~\ref{lem:OMC_decomposition_of_Bk}, we 
  introduce the following notation:
  \begin{equation}
   C_k\pr{f}:=\sum_{\substack{J\subset [d]\\ J\neq \emptyset }}
    \prod_{j\in J}\pr{\Id-U_j}m_{k,J}, \quad k\geq 1.
  \end{equation}
  Therefore, for any $k\geq 1$, the following equality holds 
  \begin{equation}\label{eq:decomposition_of_f}
   f=f-B_k\pr{f}+m_k+C_k\pr{f}.
  \end{equation}

  \subsubsection{Sufficiency} 
   
   Lemma~\ref{lem:OMC_decomposition_of_Bk} gave a sequence of functions 
   $\pr{m_k}_{k\geqslant 1}$ such that $\pr{m_k\circ T^{\mathbf i}}_{\mathbf i\in \Z^d}$ is an 
  orthomartingale differences random field with respect to the filtration 
  $\pr{T^{-\mathbf i}\f_{\gr{0}}}_{\mathbf i\in \Z^d}$. 
  Now, we have that to show that 
  if \eqref{eq:sufficient_cond_MA} holds, then 
  the sequence $\pr{m_k}_{k\geqslant 1}$
  is convergent, and that the limiting function $m$
  satisfies \eqref{eq:def_orthomartingale_approximation}.

   \begin{Lemma}\label{lem:neglig_cobords}
   For any function $f$, and any $k\geq 1$, $\norm{C_k\pr{f}}_+=0$.
   \end{Lemma}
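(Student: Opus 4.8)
The plan is to show that each coboundary summand of $C_k(f)$ is negligible for the plus semi-norm, and then conclude by subadditivity. Since $\norm{\cdot}_+$ is a $\limsup$ of $\mathbb L^2$-norms of maxima of partial sums, it is subadditive, so it suffices to prove $\norm{\prod_{j\in J}\pr{\Id-U_j}m_{k,J}}_+=0$ for every nonempty $J\subset[d]$, where $m_{k,J}$ is supplied by Lemma~\ref{lem:OMC_decomposition_of_Bk}. For such a summand the partial sum factorizes: over the coboundary directions $j\in J$ the sum telescopes, $\sum_{i_j=0}^{n_j-1}U_j^{i_j}\pr{\Id-U_j}=\Id-U_j^{n_j}$, while over the complementary directions $q\in J^c$ a genuine partial sum $S^{(J^c)}_{\gr i}$ survives. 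Thus, with $\gr i_A=\sum_{j\in A}i_j\gr e_j$,
\[
 S_{\gr i}\Bigl(\prod_{j\in J}\pr{\Id-U_j}m_{k,J}\Bigr)=\prod_{j\in J}\pr{\Id-U_j^{i_j}}S^{(J^c)}_{\gr i}(m_{k,J})=\sum_{A\subset J}\pr{-1}^{\abs A}U^{\gr i_A}S^{(J^c)}_{\gr i}(m_{k,J}).
\]
Taking the maximum over $\gr 1\preccurlyeq\gr i\preccurlyeq\gr n$ and using the triangle inequality, I reduce everything to controlling, for each $A\subset J$, the term $\abs{\gr n}^{-1/2}\norm{\max_{\gr i}\abs{U^{\gr i_A}S^{(J^c)}_{\gr i}(m_{k,J})}}$.

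The engine is a uniform negligibility lemma that I would isolate: if $\pr{V_\alpha}_\alpha$ is a family of nonnegative functions that is uniformly integrable and bounded in $\mathbb L^1$, then $\sup_\alpha\abs{\gr m}^{-1}\E{\max_{\gr 1\preccurlyeq\gr i\preccurlyeq\gr m}U^{\gr i}V_\alpha}\to 0$ as $\min\gr m\to+\infty$. Its proof is short: truncating $V_\alpha=V_\alpha\mathbf 1_{\{V_\alpha\leq\Lambda\}}+V_\alpha\mathbf 1_{\{V_\alpha>\Lambda\}}$, the bounded part contributes at most $\Lambda/\abs{\gr m}\to 0$ (the constant bound is $U$-invariant), while for the tail, positivity and measure preservation give $\E{\max_{\gr i}U^{\gr i}(V_\alpha\mathbf 1_{\{V_\alpha>\Lambda\}})}\leq\E{\sum_{\gr i\preccurlyeq\gr m}U^{\gr i}(V_\alpha\mathbf 1_{\{V_\alpha>\Lambda\}})}=\abs{\gr m}\,\E{V_\alpha\mathbf 1_{\{V_\alpha>\Lambda\}}}$, and the last factor is $<\eps$ uniformly in $\alpha$ for $\Lambda$ large. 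To feed this in, I exploit positivity of $U^{\gr i_A}$ to pull the $J^c$-maximum through the shift: writing $W_{\gr n}:=\max_{\gr i_{J^c}}\abs{S^{(J^c)}_{\gr i}(m_{k,J})}\geq 0$, one has $\max_{\gr i}\abs{U^{\gr i_A}S^{(J^c)}_{\gr i}(m_{k,J})}=\max_{\gr i_A}U^{\gr i_A}W_{\gr n}$, hence $\E{\max_{\gr i_A}\abs{U^{\gr i_A}W_{\gr n}}^2}=\E{\max_{\gr i_A}U^{\gr i_A}W_{\gr n}^2}$. Because summing $m_{k,J}$ over the non-coboundary directions $J^c$ yields an orthomartingale (Lemma~\ref{lem:OMC_decomposition_of_Bk}), Proposition~\ref{prop:Doob} gives $\norm{W_{\gr n}}\leq 2^{\abs{J^c}}\bigl(\prod_{q\in J^c}n_q\bigr)^{1/2}\norm{m_{k,J}}$, and Proposition~\ref{prop:uniform_integrability} shows that $V_{\gr n}:=W_{\gr n}^2/\prod_{q\in J^c}n_q$ is uniformly integrable and bounded in $\mathbb L^1$.

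Putting the pieces together, for $A\neq\emptyset$,
\[
 \frac1{\abs{\gr n}}\E{\max_{\gr i_A}\abs{U^{\gr i_A}W_{\gr n}}^2}=\frac1{\prod_{j\in J}n_j}\E{\max_{\gr i_A}U^{\gr i_A}V_{\gr n}}\leq\frac1{\prod_{j\in A}n_j}\E{\max_{\gr i_A}U^{\gr i_A}V_{\gr n}},
\]
which tends to $0$ by the uniform negligibility lemma applied to $\pr{V_{\gr n}}$ along the directions $A$; the term $A=\emptyset$ carries no maximum in the coboundary directions and is bounded by $\bigl(\prod_{j\in J}n_j\bigr)^{-1/2}\cdot 2^{\abs{J^c}}\norm{m_{k,J}}\to 0$. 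Summing the finitely many contributions over $A\subset J$ and over $J$ yields $\norm{C_k(f)}_+=0$; the case $J=[d]$ is the degenerate one where $J^c=\emptyset$, $W_{\gr n}=\abs{m_{k,[d]}}$, and the argument collapses to the single-function statement $\abs{\gr n}^{-1/2}\max_{\gr i}\abs{U^{\gr i}m_{k,[d]}}\to 0$ in $\mathbb L^2$. The main obstacle, around which the whole scheme is built, is that after telescoping one must still sum $m_{k,J}$ over the $\abs{J^c}$ surviving directions, so $W_{\gr n}$ genuinely grows like $\bigl(\prod_{q\in J^c}n_q\bigr)^{1/2}$; the delicate point is therefore to extract the extra decay in the coboundary directions \emph{uniformly} over this growing family, which is exactly what the truncation-plus-measure-preservation argument, combined with the uniform integrability of Proposition~\ref{prop:uniform_integrability}, provides.
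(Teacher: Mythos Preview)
Your proof is correct and follows essentially the same route as the paper: reduce to each coboundary summand, telescope in the $J$-directions, control the surviving $J^c$-partial sums via Propositions~\ref{prop:Doob} and~\ref{prop:uniform_integrability}, and handle the maximum over the shift directions by a truncation argument exploiting uniform integrability. The only cosmetic differences are that the paper bounds $\prod_{j\in J}\abs{\Id-U_j^{i_j}}$ directly by $2^{\abs J}\max_{\gr i_J}\abs{U^{\gr i_J}\cdot}$ rather than expanding over subsets $A\subset J$, and then uses the crude estimate $\norm{\max_{i\in I}\abs{Y_i}}\leq\abs{I}^{1/2}\max_i\norm{Y_i}$ to reduce the shift-maximum to a single coordinate $q\in J$ before applying the one-dimensional version of your ``uniform negligibility lemma''; your multi-directional formulation of that lemma is a clean way to bypass this reduction.
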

   \begin{proof}
    It suffices to prove that for any non-empty subset $J$ of 
    $[d]$ and any $k\geq 1$, 
    \begin{equation}\label{eq:goal_lemma_neglig_cobord}
      \norm{\prod_{j\in J}\pr{\Id-U_j}m_{k,J}}_+=0.
    \end{equation}
   Observe that 
  \begin{align}
  \max_{\mathbf 1\preccurlyeq \mathbf i\preccurlyeq\mathbf n}
  \abs{S_{\mathbf i}\pr{\prod_{j\in J}\pr{\Id-U_j}m_{k,J}  }}
  &=\max_{\mathbf 1\preccurlyeq \mathbf i\preccurlyeq\mathbf n}\abs{
  \prod_{j\in J}\pr{\Id-U_j^{i_j} }S_{\gr{i_{J^c}}}
   \pr{m_{k,J}}}\\
  &\leqslant 2^{\abs{J}}
  \max_{\mathbf 0\preccurlyeq \mathbf i\preccurlyeq\mathbf n}\abs{
   U^{\gr{i_J}} S_{\gr{i_{J^c}+\gr{1_J}}}\pr{m_{k,J}}}.
  \end{align}
 Since $J$ is not empty, it contains some $q$. Using the fact that 
 $\norm{\max_{i\in I}\abs{Y_i}}\leq \abs{I}^{1/2}
 \max_{i\in I}\norm{\abs{Y_i}}$, we derive that for any $\gr{n}\smd \gr{1}$, 
 \begin{equation}
  \frac 1{\abs{\gr{n}}} 
  \E{\max_{\mathbf 1\preccurlyeq \mathbf i\preccurlyeq\mathbf n}
  \abs{S_{\mathbf i}\pr{\prod_{j\in J}\pr{\Id-U_j}m_{k,J}  }}^2  }
  \leq \frac{2^{2\abs{J}}}{n_q}\E{\max_{0\leq l\leq n_q}
  U_q^{l} Y_{\gr{n}}},
 \end{equation}
 where 
 \begin{equation}
  Y_{\gr{n}}:= \frac 1{\abs{\gr{n_{[d]\setminus J}}}}\max_{\gr{1_{[d]\setminus J}\imd\gr{i}\imd\gr{n_{[d]\setminus J}}}}
  \abs{S_{\gr{i_{J^c}+\gr{1_J}}}\pr{m_{k,J}}}^2.
 \end{equation}
 Then for any $R>0$, 
 \begin{equation}
  \frac{2^{2\abs{J}}}{n_q}\E{\max_{0\leq l\leq n_q}
  U_q^{l} Y_{\gr{n}}} \leq \frac{2^{2\abs{J}}}{n_q}R+
  2^{2\abs{J}} \E{Y_{\gr{n}}\mathbf 1\ens{Y_{\gr{n}}>R} }.
 \end{equation}
 By 
 Proposition~\ref{prop:uniform_integrability}, the family 
 $\ens{Y_{\gr{n}},\gr{n}\smd \gr{1}}$ is uniformly integrable. 
 This gives \eqref{eq:goal_lemma_neglig_cobord} and
 ends the proof of Lemma~\ref{lem:neglig_cobords}.
   \end{proof}

  \begin{Lemma}\label{lem:construction_approximation_martingale}
   Let $f\colon\Omega\to \R$ be a measurable square integrable function such that 
   \eqref{eq:sufficient_cond_MA} holds. Then the 
   sequence $\pr{m_k}_{k\geqslant 1}$ is convergent in $\mathbb L^2$ to some function $m$. 
  \end{Lemma}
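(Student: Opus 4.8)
The plan is to show that $(m_k)_{k\geq 1}$ is a Cauchy sequence in $\mathbb L^2$, so that completeness yields the limit $m$. The natural strategy is to compare $m_k$ and $m_{k'}$ by routing both through the decomposition \eqref{eq:decomposition_of_f}. Writing $f = f - B_k(f) + m_k + C_k(f)$ and the analogous identity for $k'$, and subtracting, I obtain
\begin{equation}
 m_k - m_{k'} = \pr{B_k(f)-B_{k'}(f)} + \pr{C_{k'}(f)-C_k(f)}.
\end{equation}
The point is that $m_k$ is an orthomartingale difference function, so its $\mathbb L^2$-norm is controlled by the $\norm{\cdot}_+$ semi-norm of its partial sums. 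Concretely, since $\pr{m_k\circ T^{\gr{i}}}$ is an orthomartingale differences random field, the partial sums are orthogonal and $\norm{m_k - m_{k'}} = \norm{(m_k-m_{k'})}$ is comparable, up to the factor coming from Proposition~\ref{prop:Doob}, to $\norm{m_k-m_{k'}}_+$, which in turn equals the $\norm{\cdot}_+$ semi-norm of the right-hand side above.

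First I would establish the elementary but crucial fact that $\norm{\cdot}_+$ dominates the $\mathbb L^2$-norm for an orthomartingale difference function. Indeed, for such a function $g$, the orthogonality of the summands gives $\norm{S_{\gr{n}}(g)}^2 = \abs{\gr{n}}\norm{g}^2$, whence $\norm{g} \leq \norm{g}_+$; combined with Proposition~\ref{prop:Doob} this pins $\norm{g}$ between constant multiples of $\norm{g}_+$. Since $m_k - m_{k'}$ is itself an orthomartingale difference function (a difference of two such, with respect to the same filtration $\pr{T^{-\gr{i}}\f_{\gr{0}}}$), I can write
\begin{equation}
 \norm{m_k-m_{k'}} \leq \norm{m_k-m_{k'}}_+ = \norm{\pr{B_k(f)-B_{k'}(f)} + \pr{C_{k'}(f)-C_k(f)}}_+.
\end{equation}
By the triangle inequality for $\norm{\cdot}_+$ (which holds since it is defined through the $\mathbb L^2$-norm of a maximum of partial sums) and by Lemma~\ref{lem:neglig_cobords}, the coboundary terms $C_k(f)$ and $C_{k'}(f)$ contribute nothing: $\norm{C_k(f)}_+ = \norm{C_{k'}(f)}_+ = 0$. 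This reduces the bound to $\norm{m_k-m_{k'}} \leq \norm{B_k(f)-B_{k'}(f)}_+$.

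It then remains to control $\norm{B_k(f)-B_{k'}(f)}_+$. Here I would insert and subtract $f$: by the triangle inequality $\norm{B_k(f)-B_{k'}(f)}_+ \leq \norm{B_k(f)-f}_+ + \norm{f-B_{k'}(f)}_+$, and the hypothesis \eqref{eq:sufficient_cond_MA} states precisely that $\norm{B_k(f)-f}_+ \to 0$ as $k\to\infty$. Hence given $\eps>0$ there is $k_0$ such that both terms are below $\eps/2$ for $k,k'\geq k_0$, establishing the Cauchy property. The main obstacle I anticipate is justifying rigorously that $\norm{\cdot}_+$ is genuinely a semi-norm (subadditive and homogeneous) despite being built from a $\limsup$ of a maximum of partial sums, and that $m_k - m_{k'}$ is an orthomartingale difference with respect to a \emph{single fixed} filtration so that the two-sided comparison with $\norm{\cdot}$ applies uniformly in $k,k'$; the coboundary parts $m_{k,J}$ attached to proper subsets $J$ must be carefully kept out of the leading orthomartingale term, which is exactly what Lemma~\ref{lem:neglig_cobords} is designed to absorb.
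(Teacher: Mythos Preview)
Your proof is correct and follows essentially the same route as the paper: use the decomposition $m_k - m_{k'} = (B_k(f)-B_{k'}(f)) + (C_{k'}(f)-C_k(f))$, kill the coboundary contribution via Lemma~\ref{lem:neglig_cobords}, and conclude with the hypothesis \eqref{eq:sufficient_cond_MA}. The only cosmetic difference is that the paper works directly with the identity $\norm{m_k-m_l}=n^{-d/2}\norm{S_{n\gr{1}}(m_k-m_l)}$ along the diagonal and lets $n\to\infty$, whereas you package the same step as the inequality $\norm{g}\leq\norm{g}_+$ for orthomartingale differences; both lead to the bound $\norm{m_k-m_{k'}}\leq\norm{B_k(f)-f}_++\norm{B_{k'}(f)-f}_+$.
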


  \begin{proof}
   Let $k$ and $l$ be fixed positive integers. Since 
   $\pr{\pr{m_k-m_l}\circ T^{\mathbf i}}_{\mathbf i\in \Z^d}$ is an 
   orthomartingale differences random field with respect to the filtration 
   $\pr{T^{-\mathbf i}\f_{\gr{0}}}_{\mathbf i\in \Z^d}$, we have for each 
   positive integer $n$, by orthogonality of 
   increments, 
   \begin{equation}\label{eq:norme_de_mk-ml}
    \norm{m_k-m_l}=\frac 1{n^{d/2}}\norm{S_{n\mathbf 1}\pr{m_k-m_l}}.
   \end{equation}
  By Lemma~\ref{lem:OMC_decomposition_of_Bk} and 
  \eqref{eq:decomposition_of_f}, the following equality holds
  \begin{equation}
   S_{n\mathbf 1}\pr{m_k-m_l}=S_{n\mathbf 1}\pr{B_k\pr{f}-f}-
   S_{n\mathbf 1}\pr{B_l\pr{f}-f}-S_{n\mathbf 1}\pr{C_k\pr{f}}
   +S_{n\mathbf 1}\pr{C_l\pr{f}}
  \end{equation}
  hence taking the $\mathbb L^2$ norm, we get 
  \begin{equation*}
   \norm{S_{n\mathbf 1}\pr{m_k-m_l}}
   \leqslant \norm{S_{n\mathbf 1}\pr{B_k\pr{f}-f}}+\norm{S_{n\mathbf 1}\pr{B_l\pr{f}-f}} 
   +\norm{S_{n\mathbf 1}\pr{C_k\pr{f}}}+
   \norm{S_{n\mathbf 1}\pr{C_l\pr{f}}}.
  \end{equation*}
  Dividing on both sides by $n^{d/2}$ and letting $n$ going to infinity, 
  we get by Lemma~\ref{lem:neglig_cobords} and \eqref{eq:norme_de_mk-ml}
   \begin{equation}
   \norm{m_k-m_l}\leqslant \norm{B_k\pr f-f}_++\norm{B_l\pr f-f}_+.
   \end{equation}
  This proves that the sequence $\pr{m_k}_{k\geqslant 1}$ is Cauchy in $\mathbb L^2$ 
  hence convergent to some function $m$. This ends the proof of 
  Lemma~\ref{lem:construction_approximation_martingale}.
  \end{proof}
 
 Since for each $k$, the function $m_k$ is $\f_{\gr{0}}$-measurable, the 
 function $m$ is $\f_{\gr{0}}$-measurable. Moreover, we have for each $q\in 
 [d]$, $\mathbb E\left[m_k\mid T_q\f_{\gr{0}}\right]=0$ hence 
 $\mathbb E\left[m\mid T_q\f_{\gr{0}}\right]=0$, which proves that 
 $\pr{m\circ T^{\mathbf i}}_{\mathbf i\in \Z^d}$ is an orthomartingale 
 differences random field. 
 
 The purpose of the following lemma is the verification that $m$ gives the 
 wanted approximation.
 
 \begin{Lemma}\label{lem:orthomartingale_m_works}
  Let $f\colon\Omega\to \R$ be a measurable square integrable function such that 
   \eqref{eq:sufficient_cond_MA} holds and let $m$ be the 
   function given by Lemma~\ref{lem:construction_approximation_martingale}. Then 
   \eqref{eq:def_orthomartingale_approximation} takes place.
 \end{Lemma}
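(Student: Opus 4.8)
The plan is to establish that $\norm{f-m}_+=0$, which is precisely the assertion \eqref{eq:def_orthomartingale_approximation}. First I would record that the plus semi-norm of \eqref{eq:definition_norm_plus} is subadditive and symmetric. Indeed, $g\mapsto S_{\mathbf i}(g)$ is linear, so for fixed $\mathbf n$ the quantity $\max_{\mathbf 1\preccurlyeq\mathbf i\preccurlyeq\mathbf n}\abs{S_{\mathbf i}(\cdot)}$ is subadditive pointwise; the $\mathbb L^2$-norm is subadditive; and the $\limsup$ taken in the sense of \eqref{eq:definition_of_limsup} preserves subadditivity. Moreover $\norm{-g}_+=\norm{g}_+$, so that in particular $\norm{f-B_k(f)}_+=\norm{B_k(f)-f}_+$, the quantity appearing in \eqref{eq:sufficient_cond_MA}.

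Next I would invoke the decomposition \eqref{eq:decomposition_of_f}, which for each fixed $k\geq 1$ reads $f-m=\pr{f-B_k(f)}+\pr{m_k-m}+C_k(f)$. Applying subadditivity yields, for every $k\geq 1$,
\begin{equation*}
 \norm{f-m}_+\leq \norm{f-B_k(f)}_++\norm{m_k-m}_++\norm{C_k(f)}_+ .
\end{equation*}
The last summand vanishes identically by Lemma~\ref{lem:neglig_cobords}.

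To handle the middle term I would observe that $m_k-m$ is itself an orthomartingale difference: both $m_k$ and $m$ are $\f_{\gr{0}}$-measurable and satisfy $\E{m_k\mid T_q\f_{\gr{0}}}=\E{m\mid T_q\f_{\gr{0}}}=0$ for every $q\in[d]$, whence the same holds for their difference and $\pr{(m_k-m)\circ T^{\mathbf i}}_{\mathbf i\in\Z^d}$ is an orthomartingale differences random field. Proposition~\ref{prop:Doob} then applies to $m_k-m$ and gives, for every $\mathbf n\succcurlyeq\mathbf 1$,
\begin{equation*}
 \frac 1{\abs{\mathbf n}^{1/2}}\norm{\max_{\mathbf 1\preccurlyeq\mathbf i\preccurlyeq\mathbf n}\abs{S_{\mathbf i}(m_k-m)}}\leq 2^d\norm{m_k-m}.
\end{equation*}
Taking the $\limsup$ over $\mathbf n$ of the left-hand side produces $\norm{m_k-m}_+\leq 2^d\norm{m_k-m}$, since the right-hand side does not depend on $\mathbf n$.

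Combining the three estimates gives $\norm{f-m}_+\leq \norm{f-B_k(f)}_++2^d\norm{m_k-m}$ for every $k\geq 1$. The left-hand side is independent of $k$, while as $k\to+\infty$ the first term on the right tends to $0$ by the hypothesis \eqref{eq:sufficient_cond_MA} and the second tends to $0$ because $m_k\to m$ in $\mathbb L^2$ by Lemma~\ref{lem:construction_approximation_martingale}. Letting $k\to+\infty$ therefore forces $\norm{f-m}_+=0$, which is exactly \eqref{eq:def_orthomartingale_approximation}. I do not anticipate a genuine obstacle here: the only points meriting explicit verification are the subadditivity and symmetry of $\norm{\cdot}_+$, and the remark that $m_k-m$ inherits the orthomartingale-difference structure so that the Doob-type inequality of Proposition~\ref{prop:Doob} is available for it.
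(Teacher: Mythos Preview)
Your proof is correct and follows essentially the same approach as the paper's: decompose $f-m$ via \eqref{eq:decomposition_of_f}, control the orthomartingale piece $m_k-m$ by Proposition~\ref{prop:Doob}, kill the coboundary piece $C_k(f)$ by Lemma~\ref{lem:neglig_cobords}, and let $k\to\infty$ using \eqref{eq:sufficient_cond_MA} and Lemma~\ref{lem:construction_approximation_martingale}. Your formulation is in fact slightly tidier, since you work directly with the subadditivity of $\norm{\cdot}_+$ rather than writing out the $\limsup$ at each step as the paper does.
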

  
 \begin{proof}[Proof of Lemma~\ref{lem:orthomartingale_m_works}]
  Let $k\geqslant 1$ be an arbitrary but fixed integer. For any $\mathbf n\in \N^d$ 
  such that $\mathbf n \succcurlyeq \mathbf 1$, we have, using 
  Proposition~\ref{prop:Doob} with $M:=m-m_k$, 
  \begin{equation}
   \frac 1{\abs{\mathbf n}^{1/2}}
   \norm{\max_{\mathbf 1\preccurlyeq \mathbf i\preccurlyeq \mathbf n}\abs{S_{\mathbf i}
   \pr{f-m}}}\leqslant \frac 1{\abs{\mathbf n}^{1/2}}
   \norm{\max_{\mathbf 1\preccurlyeq \mathbf i\preccurlyeq \mathbf n}\abs{S_{\mathbf i}
   \pr{f-m_k}}}+2^d\norm{m-m_k}.
  \end{equation}
 Now, we use the inequality 
  \begin{equation*}
    \frac 1{\abs{\mathbf n}^{1/2}}
   \norm{\max_{\mathbf 1\preccurlyeq \mathbf i\preccurlyeq \mathbf n}\abs{S_{\mathbf i}
   \pr{f-m_k}}}\leqslant 
    \frac 1{\abs{\mathbf n}^{1/2}}
   \norm{\max_{\mathbf 1\preccurlyeq \mathbf i\preccurlyeq \mathbf n}\abs{S_{\mathbf i}
   \pr{f-B_k(f)}}}+
    \frac 1{\abs{\mathbf n}^{1/2}}
   \norm{\max_{\mathbf 1\preccurlyeq \mathbf i\preccurlyeq \mathbf n}\abs{S_{\mathbf i}
   \pr{m_k-B_k(f)}}}
  \end{equation*}
 and take the $\limsup$ as $\mathbf n$ goes to infinity to obtain that for any 
 $k\geqslant 1$,
 \begin{equation}\label{eq:controle_maxima_f-m}
  \limsup_{\mathbf n\to \infty}
   \frac 1{\abs{\mathbf n}^{1/2}}
   \norm{\max_{\mathbf 1\preccurlyeq \mathbf i\preccurlyeq \mathbf n}\abs{S_{\mathbf i}
   \pr{f-m}}}\leqslant \norm{B_k(f)-f}_++\norm{C_k(f)}_+
   +2^d\norm{m-m_k}. 
 \end{equation}
 By \eqref{eq:sufficient_cond_MA}, 
 Lemmas~\ref{lem:neglig_cobords} and \ref{lem:construction_approximation_martingale}, we 
 get that 
 the right hand side of \eqref{eq:controle_maxima_f-m} converges to $0$ as $k$ 
 goes to infinity. This concludes the proof of Lemma~\ref{lem:orthomartingale_m_works} and 
 that of Theorem~\ref{thm:CNS_orthomartingale_approximation}.
 \end{proof}

\subsection{Proof under projective conditions}

  \subsubsection{Hannan's condition}
  
Lemma~5.2 in \cite{MR3264437} states the following inequality: for 
any function $f$ satisfying the conditions of Proposition~\ref{prop:Hannan} 
and any $\gr{n}\in \pr{\N^*}^d$, 
\begin{equation}\label{eq:inegalite_maximale_Hannan}
 \norm{\max_{\gr{1}\imd\gr{i}\imd\gr{n}}
 \abs{S_{\gr{i}}\pr{f}}} \leq 2^d\abs{\gr{n}}^{1/2}
 \sum_{\gr{i}\in\Z^d}\norm{\pi_{\gr{i}}\pr{f}}.
\end{equation}
We shall check \eqref{eq:easier_sufficient_cond}. To this aim, 
we fix a nonempty subset $J$ of $[d]$ and $E\subset [d]$ and we 
apply \eqref{eq:inegalite_maximale_Hannan} to the function 
$k^{-\abs{J}}\sum_{\gr{1_J}\imd \gr{j}\imd k\gr{1_J}}P_E^{\gr{j}}f$ (which 
satisfies the assumptions of Proposition~\ref{prop:Hannan} because so does $f$)
in order to obtain
\begin{equation}
 \norm{k^{-\abs{J}}\sum_{\gr{1_J}\imd \gr{j}\imd k\gr{1_J}}P_E^{\gr{j}}f}_+
 \leq 2^d\sum_{\gr{i}\in\Z^d}\norm{\pi_{\gr{i}}
 \pr{k^{-\abs{J}}\sum_{\gr{1_J}\imd \gr{j}\imd k\gr{1_J}}P_E^{\gr{j}}f}}.
\end{equation}
Define $I_E:=\ens{\gr{i}\in \Z^d\mid i_q> 0\mbox{ if and only if }q\in E}$.
Then for $\gr{i}\in \Z^d\setminus I_E$, $
\pi_{\gr{i}}
 \pr{k^{-\abs{J}}\sum_{\gr{1_J}\imd \gr{j}\imd k\gr{1_J}}P_E^{\gr{j}}f}=0$
and if $\gr{i}$ belongs to $I_E$, then 
\begin{equation}
\norm{\pi_{\gr{i}}
 \pr{k^{-\abs{J}}\sum_{\gr{1_J}\imd \gr{j}\imd k\gr{1_J}}P_E^{\gr{j}}f}}
 \leq k^{-\abs{J}}\sum_{\gr{1_J}\imd \gr{j}\imd k\gr{1_J}}
 \norm{\pi_{\gr{i}-\gr{j}\cdot \eps\pr{E} } }
\end{equation}
hence 
\begin{equation}
 \sum_{\gr{i}\in I_E}\norm{\pi_{\gr{i}}
 \pr{k^{-\abs{J}}\sum_{\gr{1_J}\imd \gr{j}\imd k\gr{1_J}}P_E^{\gr{j}}f}}
 \leq k^{-\abs{J}}\sum_{\gr{1_J}\imd \gr{j}\imd k\gr{1_J}}
 \sum_{\gr{i}\in I_E}
 \norm{\pi_{\gr{i}-\gr{j}\cdot \eps\pr{E} } }.
\end{equation}
Since $J$ is nonempty, we can choose $q\in J$. Observe that 
\begin{equation}
 \sum_{\gr{1_J}\imd \gr{j}\imd k\gr{1_J}}
 \sum_{\gr{i}\in I_E}
 \norm{\pi_{\gr{i}-\gr{j}\cdot \eps\pr{E} } } \leq 
 k^{\abs{J}-1}\sum_{j=1}^k\sum_{\gr{i}\in \Z^d: \abs{i_q}>j }
 \norm{\pi_{\gr{i} }},
\end{equation}
hence 
\begin{equation}
 \norm{k^{-\abs{J}}\sum_{\gr{1_J}\imd \gr{j}\imd k\gr{1_J}}P_E^{\gr{j}}f}_+ 
 \leq 2^d\frac 1k\sum_{j=1}^k\sum_{\gr{i}\in \Z^d: \abs{i_q}>j }
 \norm{\pi_{\gr{i} }}.
\end{equation}
That \eqref{eq:easier_sufficient_cond} is satisfied follows from 
finiteness of $\sum_{\gr{i}\in \Z^d }
 \norm{\pi_{\gr{i} }}$. This concludes the proof of Theorem~\ref{prop:Hannan}.

  \subsubsection{Maxwell and Woodroofe condition}
  
\begin{proof}[Proof of Proposition~\ref{prop:inegalite_maximale}]

As in \cite{MR2123210,MR2255301,2014arXiv1403.0772C}, the proof will 
be done by dyadic induction.

 We shall prove by induction on $d$ the following assertion: there 
 exists constants $K(d)$, $C(d,J)$, $\emptyset\subsetneq J\subset [d]$ 
 such that for any commuting invertible measure preserving maps $T_1,\dots,T_d$, 
 any sub-$\sigma$-algebra $\f_{\gr{0}}$ of $\f$ such that 
 $\pr{T^{-\gr{i}}\f_{\gr{0}}}_{\gr{i}\in\Z^d}$ is a commuting filtration, any 
 subset $E$ of $[d]$
 any  function $f\in 
 \mathcal H_{d,E}$ and any $\gr{n}\in \N^d$, 
 \begin{multline}\label{eq:maximal_inequality_dim_d}
  \norm{\max_{\gr{1}\imd \gr{i}\imd \gr{2^n}}  \abs{S_{\gr{i}}\pr{f}} } \leq 
  \abs{\gr{2^n}}^{1/2}K\pr{d}\norm{f}\\ + \abs{\gr{2^n}}^{1/2}
  \sum_{\emptyset\subsetneq J\subset [d]} 
  C\pr{d,J}\sum_{\gr{0_J}\imd \gr{i}\imd \gr{n_J}}
  \abs{\gr{2^{i_J}}}^{-1/2}  \norm{ \sum_{\gr{1_J} \imd \gr{j}\imd \gr{\pr{2^i}_J}    } 
  P_E^{\gr{j}} f}
 \end{multline}
 where $P_E$ is defined by \eqref{eq:definition_de_Pe}.

 The constants are defined recursively in the following way:
 \begin{equation}\label{eq:recurrence_Kd}
  K\pr{d+1}=6K\pr{d},
 \end{equation}
 if $J$ is a nonempty subset of $[d]$, then 
 \begin{equation}\label{eq:recurrence_Cd+1_J}
  C\pr{d+1,J}=4C\pr{d,J}
 \end{equation}
 \begin{equation}\label{eq:recurrence_Cd+1_J_bis}
  C\pr{d+1,J\cup\ens{d+1}}=2\sqrt 2C\pr{d+1,J}+C\pr{d,J}.
 \end{equation}
and 
\begin{equation}\label{eq:recurrence_singleton_d+1}
 C\pr{d+1,\ens{d+1}}=\sqrt 2 K\pr{d+1}.
\end{equation}

When $d=1$, the result was established in Proposition~2.3. of \cite{MR2123210} when 
the function $f$ is $\f_0$-measurable and was extended to the nonadapted 
case in Proposition~1 of \cite{MR2344817}.

Now, assume that the result holds for some $d\geq 1$ and let us prove it for 
$d+1$. This will be done by induction on $n_{d+1}$. More precisely, we consider the 
following assertion $\mathcal P(m)$ defined as "there 
 exists constants $K(d+1)$, $C(d+1,J)$, $\emptyset\subsetneq J\subset [d+1]$ 
 such that for any commuting invertible measure preserving maps $T_1,\dots,T_{d+1}$, 
 any sub-$\sigma$-algebra $\f_{\gr{0}}$ of $\f$ such that 
 $\pr{T^{-\gr{i}}\f_{\gr{0}}}_{\gr{i}\in\Z^{d+1}}$ is a completely commuting filtration, 
 any subset $E$ of $[d+1]$,
 any function $f\in \h_{d+1,E}$, any $\gr{n}\in \N^{d+1}$ such that 
 $n_{d+1}\leq m$, 
  \begin{multline}\label{eq:maximal_inequality_dim_d+1}
  \norm{\max_{\gr{1}\imd \gr{i}\imd \gr{2^n}}  \abs{S_{\gr{i}}\pr{f}} } \leqslant 
  \abs{\gr{2^n}}^{1/2}K\pr{d+1}\norm{f}\\ + \abs{\gr{2^n}}^{1/2}
  \sum_{\emptyset\subsetneq J\subset [d+1]}
  C\pr{d+1,J}\sum_{\gr{0_J}\imd \gr{i}\imd \gr{n_J}}
  \abs{\gr{2^i}}^{-1/2}  \norm{ \sum_{\gr{1_J} \imd \gr{j}\imd \gr{\pr{2^i}_J}    } 
  P_E^{\gr{j}} f} ,
 \end{multline}
 where $P_E$ is defined by \eqref{eq:definition_de_Pe}.
 
 The assertion $\mathcal P(0)$ holds by the case of the dimension $d$. 
 Now assume that $\mathcal P(m)$ is true for some $m$ and let us 
 prove $\mathcal P(m+1)$. We thus know that
 \begin{enumerate}[label=\alph*)]
  \item\label{it:dim_d} inequality 
 \eqref{eq:maximal_inequality_dim_d} 
 holds for any commuting invertible measure preserving maps $T_1,\dots,T_d$, 
 any sub-$\sigma$-algebra $\f_{\gr{0}}$ of $\f$ such that 
 $\pr{T^{-\gr{i}}\f_{\gr{0}}}_{\gr{i}\in\Z^d}$ is a commuting filtration, 
 any $E\subset [d]$,
 any function $f\in \h_{d,E}$ and any $\gr{n}\in \N^d$ and
 \item\label{it:HDR_dim_d+1}  for any commuting invertible measure preserving maps $\widetilde{T_1},
 \dots,\widetilde{T_{d+1}}$,  
 any sub-$\sigma$-algebra $\widetilde{\f_{\gr{0}}}$ of $\f$ such that 
 $\pr{\widetilde{T}^{-\gr{i}}\widetilde{\f_{\gr{0}}}}_{\gr{i}\in\Z^{d+1}}$ is 
 a commuting filtration, any $E\subset [d+1]$,
 any function $f\in \h_{d+1,E}$ and
 any $\gr{\widetilde{n}}\in \N^{d+1}$ 
 such that $\widetilde{n_{d+1}}\leq m$, 
 \begin{multline}\label{eq:goal_d+1}
  \norm{\max_{\gr{1}\imd \gr{i}\imd \gr{2^n}}  \abs{S_{\gr{i}}\pr{
  \til{f}}} } \leqslant 
  \abs{\gr{2^n}}^{1/2}K\pr{d+1}\norm{\til{f}}\\ + \abs{\gr{2^n}}^{1/2}
  \sum_{\emptyset\subsetneq J\subset [d+1]} 
  C\pr{d+1,J}\sum_{\gr{0_J}\imd \gr{i}\imd \gr{n_J}}
  \abs{\gr{2^i}}^{-1/2}  \norm{ \sum_{\gr{1_J} \imd \gr{j}\imd \gr{\pr{2^i}_J}    } 
  \til{P_E}^{\gr{j}} \til{f}} ,
 \end{multline}
 where the operators $\til{P_E}$ is defined by \eqref{eq:definition_de_Pe} with 
 $\f_{\infty\gr{1_J}}$ replaced by $\til{\f_{\infty\gr{1_J}}}$.
 \end{enumerate}
Let $T_1,\dots,T_{d+1}$ be commuting invertible measure preserving maps , 
 $\f_{\gr{0}}$ be a sub-$\sigma$-algebra  of $\f$ such that 
 $\pr{T^{-\gr{i}}\f_{\gr{0}}}_{\gr{i}\in\Z^{d+1}}$ is a commuting filtration, 
 $E\subset [d+1]$,
  $f\in \h_{d+1,E}$, and $\gr{n}\in \N^{d+1}$ such that 
  $n_{d+1}\leq m+1$.
 It suffices to prove \eqref{eq:maximal_inequality_dim_d+1} in the case 
 $\gr{n}\in \N^{d+1}$ with $n_{d+1}=m+1$. 
 We define 
 \begin{equation}
  g:= f-U_{d+1}^{s\pr{E}}P_{d+1,E}^{\gr{e_{d+1}}}\pr{f}
 \end{equation}
 where $s\pr{E}=1$ if $d+1 \in E$ and $-1$ otherwise. 
We derive the inequality
 \begin{multline}
  \max_{\gr{1}\imd \gr{i}\imd \gr{2^n}}
  \abs{S_{\gr{i}}\pr{f}} \leq  
   \max_{\gr{1}\imd \gr{i}\imd \gr{2^n}}
  \abs{S_{\gr{i}}\pr{g   }}\\+
   \max_{\gr{1}\imd \gr{i}\imd \gr{2^{n-(m+1)e_{d+1}}}}
  \max_{1\leq j\leq 2^{m}}U_{d+1}^j
  \abs{S_{\gr{i}}\pr{ U_{d+1}^{s\pr{E}}P_{d+1,E}^{\gr{e_{d+1}}}\pr{f}    }
  }
   \\   
  +  \max_{\gr{1}\imd \gr{i}\imd \gr{2^{n-e_{d+1}}}}
  \abs{S_{\gr{i}}\pr{T_1,\dots,T_d,T_{d+1}^2,
  \pr{\Id+U_{d+1}}U_{d+1}^{s\pr{E}}P_E\pr{P_{d+1,E}^{\gr{e_{d+1}}}\pr{f} }  
  }},
 \end{multline}
 where $S_{\gr{i}}\pr{T_1,\dots,T_d,T_{d+1}^2,\cdot}$ is defined 
 like in \eqref{eq:definition_des_sommes_partielles} but $T_{d+1}$ 
 is replaced by $T_{d+1}^2$, hence 
 \begin{equation}\label{eq:decomposition_of_max}
  \norm{\max_{\gr{1}\imd \gr{i}\imd \gr{2^n}}
  \abs{S_{\gr{i}}\pr{f}}} \leq (I)+(II)+(III),
 \end{equation}
 where 
 \begin{equation}
  (I):= \norm{
   \max_{\gr{1}\imd \gr{i}\imd \gr{2^n}}
  \abs{S_{\gr{i}}\pr{g }}}
 \end{equation}
 \begin{equation}
  (II):= \norm{\max_{\gr{1}\imd \gr{i}\imd \gr{2^{n-(m+1)e_{d+1}}}}
  \max_{1\leq j\leq 2^{m}}U_{d+1}^j
  \abs{S_{\gr{i}}\pr{U_{d+1}^{s\pr{E}} P_{d+1,E}^{\gr{e_{d+1}}}\pr{f}     }
  }}\mbox{ and }
 \end{equation}
 \begin{equation}
  (III):=\norm{ \max_{\gr{1}\imd \gr{i}\imd \gr{2^{n-e_{d+1}}}}
  \abs{S_{\gr{i}}\pr{T_1,\dots,T_d,T_{d+1}^2,
  \pr{\Id+U_{d+1}}U_{d+1}^{s\pr{E}}P_{d+1,E}^{\gr{e_{d+1}}}\pr{f} 
  }}}.
 \end{equation}

 If $s(E)=1$, we define the $\sigma$-algebra $\mathcal G_N$ by 
\begin{equation}\label{eq:definition_de_GN}
 \mathcal G_N=\sigma\left(\bigcup_{\gr{i}\in\Z^d}T^{\gr{i}}
T_{d+1}^{-N}\f_{\gr{0}}\right)
\end{equation}
and if $s(E)=-1$, 
\begin{equation} 
 \mathcal G_N=\sigma\left(\bigcup_{\gr{i}\in\Z^d}T^{\gr{i}}
T_{d+1}^{-N-1}\f_{\gr{0}}\right)
\end{equation}

The control of $(I)$ requires the following lemmas. 

 \begin{Lemma}\label{lem:submartingale_property}
  The sequence 
$\pr{ \max_{\gr{1_{[d]}}\imd \gr{i_{[d]}}\imd \gr{2^n_{[d]}}}
\max_{1\leqslant i_{d+1}\leqslant N}
  \abs{S_{\gr{i}}\pr{g  }  }}_{N\geq 1}$ 
  is a submartingale with respect to the filtration $\pr{\mathcal G_N}_{N\geq 1}$.
 \end{Lemma}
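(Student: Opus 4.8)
The plan is to fix $\gr{i_{[d]}}$ with $\gr{1_{[d]}}\imd\gr{i_{[d]}}\imd\gr{2^n_{[d]}}$, to exhibit for each such $\gr{i_{[d]}}$ a (sub)martingale structure of the partial sums $N\mapsto S_{\gr{i_{[d]}}+N\gr{e_{d+1}}}\pr{g}$ relative to $\pr{\mathcal G_N}_{N\geq 1}$, and then to produce the asserted object by taking absolute values, the running maximum over $i_{d+1}$, and finally the maximum over the finitely many $\gr{i_{[d]}}$ in the box. First I would check that $\pr{\mathcal G_N}_{N\geq 1}$ is increasing, which is immediate from $\f_{\gr{0}}\subset T_{d+1}^{-1}\f_{\gr{0}}$ after applying the commuting maps $T^{\gr{i}}$, $\gr{i}\in\Z^d$. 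The computation I would isolate at the outset is the transport of conditioning under the shift $T_{d+1}^N$: since each $T^{\gr{i}}$ ($\gr{i}\in\Z^d$) commutes with $T_{d+1}$ and fixes the generating family of $\mathcal G_N$, one gets $T_{d+1}^N\mathcal G_N=\f_{\infty\gr{1_{[d]}}}$ when $s\pr{E}=1$ and $T_{d+1}^N\mathcal G_N=\sigma\pr{\bigcup_{\gr{i}\in\Z^d}T^{\gr{i}}T_{d+1}^{-1}\f_{\gr{0}}}$ when $s\pr{E}=-1$; combined with the identity $\E{h\circ T_{d+1}^N\mid\mathcal G_N}=\pr{\E{h\mid T_{d+1}^N\mathcal G_N}}\circ T_{d+1}^N$, this reduces every computation on $\mathcal G_N$ to one on a fixed corner $\sigma$-algebra.

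The core of the argument is two facts about $g:=f-U_{d+1}^{s\pr{E}}P_{d+1,E}^{\gr{e_{d+1}}}\pr{f}$, obtained by expanding \eqref{eq:definition_de_Pe} and using both $f\in\h_{d+1,E}$ and complete commutativity, in the guise $\E{\E{h\mid\f_{\infty\gr{1_A}}}\mid\f_{\infty\gr{1_B}}}=\E{h\mid\f_{\infty\gr{1_{A\cap B}}}}$ together with its coordinatewise-minimum analogue for the shifted corner algebras $T_{d+1}^{\pm 1}\f_{\infty\gr{1_J}}$. When $d+1\in E$, so $s\pr E=1$, I would first rewrite $U_{d+1}P_{d+1,E}^{\gr{e_{d+1}}}\pr{f}=\sum_{J\subset E}\pr{-1}^{\abs J+\abs E}\E{f\mid T_{d+1}^{-1}\f_{\infty\gr{1_J}}}$. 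In this sum the term $J=E$ equals $f$ and cancels the leading $f$ of $g$; the terms with $J\subsetneq E$ and $d+1\in J$ vanish because $\E{f\mid\f_{\infty\gr{1_J}}}=0$; and the remaining terms, for which $d+1\notin J$, are measurable with respect to $T_{d+1}^{-1}\f_{\infty\gr{1_J}}$. This simultaneously shows that $g$ is measurable for the corner $\sigma$-algebra at level $1$ in the direction $d+1$ and, after one further slicing that collapses each surviving term to $\E{f\mid\f_{\infty\gr{1_J}}}=0$, that $\E{g\mid\f_{\infty\gr{1_{[d]}}}}=0$. When $d+1\notin E$, so $s\pr E=-1$, the only fact needed is the elementary one that $g$ is $\f_{\infty\gr{1_{[d]}}}$-measurable, since both $f$ and $U_{d+1}^{-1}P_{d+1,E}^{\gr{e_{d+1}}}\pr f$ are measurable at level at most $0$ in the direction $d+1$.

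To conclude, write $S_{\gr{i_{[d]}}+N\gr{e_{d+1}}}\pr g=\sum_{0\leq k\leq N-1}U_{d+1}^{k}W$ with $W:=\sum_{\gr{0}\imd\gr{k}\imd\gr{i_{[d]}}-\gr{1_{[d]}}}U^{\gr{k}}g$, so that the increment from $N$ to $N+1$ is $U_{d+1}^{N}W$. If $s\pr E=1$, the level-$1$ measurability gives $S_{\gr{i_{[d]}}+N\gr{e_{d+1}}}\pr g\in\mathcal G_N$, while $\E{U_{d+1}^N W\mid\mathcal G_N}=U_{d+1}^N\E{W\mid\f_{\infty\gr{1_{[d]}}}}=0$, the last equality holding because $\f_{\infty\gr{1_{[d]}}}$ is invariant under the $\gr{k}$-shifts appearing in $W$ and $\E{g\mid\f_{\infty\gr{1_{[d]}}}}=0$; hence $N\mapsto S_{\gr{i_{[d]}}+N\gr{e_{d+1}}}\pr g$ is a martingale, its modulus a submartingale, and a running maximum of submartingales is again a submartingale. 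If $s\pr E=-1$, the extra shift in the definition of $\mathcal G_N$ makes every $S_{\gr{i_{[d]}}+j\gr{e_{d+1}}}\pr g$ with $j\leq N+1$ already $\mathcal G_N$-measurable, so the maximum at time $N+1$ is $\mathcal G_N$-measurable and dominates the one at time $N$, which yields the submartingale inequality at once. In either case the maximum over the finitely many $\gr{i_{[d]}}$ in the box preserves the submartingale property, since $\E{\max_j M_{N+1}^j\mid\mathcal G_N}\geq\max_j\E{M_{N+1}^j\mid\mathcal G_N}\geq\max_j M_N^j$. The step I expect to be the main obstacle is the algebraic cancellation for $s\pr E=1$: extracting both the level-$1$ measurability and the vanishing of $\E{g\mid\f_{\infty\gr{1_{[d]}}}}$ from the definition of $P_{d+1,E}^{\gr{e_{d+1}}}$ demands careful bookkeeping of the corner $\sigma$-algebras and repeated appeals to complete commutativity; the exceptional case $E=[d+1]$ has to be run in parallel, using \eqref{eq:definition_de_Pd} in place of \eqref{eq:definition_de_Pe}.
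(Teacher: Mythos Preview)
Your proposal is correct and follows essentially the same route as the paper: both arguments expand $g=f-U_{d+1}^{s(E)}P_{d+1,E}^{\gr{e_{d+1}}}f$ using the definition of $P_{d+1,E}$ and the hypothesis $f\in\h_{d+1,E}$ to exhibit the required adaptedness and martingale-difference structure in the $(d+1)$th direction, and then read off the submartingale property of the running double maximum. The paper compresses this into the single formula \eqref{eq:expression_de_g_simplifiee} and asserts the conclusion, whereas you split into the two cases $s(E)=\pm 1$ and spell out the standard steps (modulus of a martingale, running maximum, pointwise maximum over the finite box); in the case $s(E)=-1$ you observe the slightly stronger fact that the sequence is predictable and nondecreasing, which is a harmless variant of the paper's unified treatment.
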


 \begin{proof}
 For any $E\subset [d+1]$, 
\begin{align}
 g=\sum_{J\subset E}\pr{-1}^{\abs{J}+\abs{E}} \pr{
    \E{   f\mid \f_{\infty \gr{1_J}}}  -
 \E{   f\mid \f_{\infty \gr{1_J}-s\pr{E}\gr{e_{d+1}} }  }}
\end{align}
and since the summand vanishes if $d+1$ belongs to $J$, we actually have 
\begin{equation}\label{eq:expression_de_g_simplifiee}
 g=\sum_{J\subset E\setminus \ens{d+1}}\pr{-1}^{\abs{J}+\abs{E}} \pr{
    \E{   f\mid \f_{\infty \gr{1_J}}}  -
 \E{   f\mid \f_{\infty \gr{1_J}-s\pr{E}\gr{e_{d+1}} }  }}.
\end{equation}
Consequently, $\max_{\gr{1_{[d]}}\imd \gr{i_{[d]}}\imd \gr{2^n_{[d]}}}
\max_{1\leqslant i_{d+1}\leqslant N}
  \abs{S_{\gr{i}}\pr{g  }  }$ is $\mathcal G_N$-measurable and 
\begin{equation}
\E{\max_{\gr{1_{[d]}}\imd \gr{i_{[d]}}\imd \gr{2^n_{[d]}}}
\max_{1\leqslant i_{d+1}\leqslant N}
  \abs{S_{\gr{i}}\pr{g  }  } \mid \mathcal G_{N-1}} \geq 
  \max_{\gr{1_{[d]}}\imd \gr{i_{[d]}}\imd \gr{2^n_{[d]}}}
\max_{1\leqslant i_{d+1}\leqslant N-1}
  \abs{S_{\gr{i}}\pr{g  }  },
\end{equation}
which ends the proof of Lemma~\ref{lem:submartingale_property}.

 \end{proof}
 
 \begin{Lemma}\label{lem:inclusion_Hilbert_spaces}
  The function $\sum_{k=0}^{2^{m+1}-1}U_{d+1}^kg$ belongs to 
  $\til{\h_{d, E\setminus \ens{d+1}}}$, where the latter 
  space is defined like $\h_{d,E\setminus \ens{d+1}}$, but 
  the $\sigma$-algebra $\f_{\gr{0}}$ is replaced by 
  $\f_{\infty\gr{1_{\ens{d+1}}}}$.
 \end{Lemma}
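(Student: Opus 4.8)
The plan is to verify the three defining properties of membership in $\til{\h_{d,E\setminus\ens{d+1}}}$ for the function $G:=\sum_{k=0}^{2^{m+1}-1}U_{d+1}^kg$: square-integrability with zero mean, measurability with respect to $\til{\f_{\infty\gr{1_{E\setminus\ens{d+1}}}}}$, and the vanishing of $\E{G\mid\til{\f_{\infty\gr{1_{E''}}}}}$ for every $E''\subsetneq E\setminus\ens{d+1}$. Writing $E':=E\setminus\ens{d+1}$, the initial bookkeeping step is the identification $\til{\f_{\infty\gr{1_{E''}}}}=\f_{\infty\gr{1_{E''\cup\ens{d+1}}}}$, valid for every $E''\subset E'$: since $\til{\f_{\gr{0}}}=\f_{\infty\gr{1_{\ens{d+1}}}}$ already has a free $(d+1)$-th coordinate, translating its generators $\f_{\gr{j}}$ by $T^{-\gr{j}}$ along the admissible directions $\gr{j}\in\Z^d$ exactly produces the $(d+1)$-dimensional $\sigma$-algebra in which the coordinates of $E''\cup\ens{d+1}$ are free. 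This recasts all three properties as statements about the original filtration $\pr{T^{-\gr{i}}\f_{\gr{0}}}_{\gr{i}\in\Z^{d+1}}$.

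For measurability and centering I would start from the simplified expression \eqref{eq:expression_de_g_simplifiee}, which writes $g$ as a signed combination over $J\subset E'$ of the functions $\E{f\mid\f_{\infty\gr{1_J}}}$ and $\E{f\mid\f_{\infty\gr{1_J}-s\pr{E}\gr{e_{d+1}}}}$. Each of these is measurable with respect to a $\sigma$-algebra whose index is coordinatewise dominated by that of $\f_{\infty\gr{1_{E'\cup\ens{d+1}}}}$, so $g$ is $\f_{\infty\gr{1_{E'\cup\ens{d+1}}}}$-measurable; and since $U_{d+1}$ acts only in the free $(d+1)$-th direction, each $U_{d+1}^kg$ remains so, whence $G$ is $\til{\f_{\infty\gr{1_{E'}}}}$-measurable. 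Centering is needed only when $E'\subsetneq [d]$, in which case $E\subsetneq[d+1]$ forces $f\in\mathbb L^2_0$; every summand of $g$ then has zero mean and $U_{d+1}$ preserves means, so $\E{G}=0$.

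The heart of the argument is the third property, which I would prove termwise in $k$, i.e. by showing $\E{U_{d+1}^kg\mid\f_{\infty\gr{1_{E''\cup\ens{d+1}}}}}=0$ for every $k$ and every $E''\subsetneq E'$. First I would push $U_{d+1}^k$ through the conditional expectations using the Koopman identity $U_{d+1}^k\E{f\mid\mathcal A}=\E{U_{d+1}^kf\mid T^{-k\gr{e_{d+1}}}\mathcal A}$, whose only effect on the $\sigma$-algebras appearing in \eqref{eq:expression_de_g_simplifiee} is to raise their $(d+1)$-th index by $k$. Conditioning the resulting summands onto $\f_{\infty\gr{1_{E''\cup\ens{d+1}}}}$ and using complete commutativity to collapse the iterated conditional expectation into a single one indexed by the coordinatewise minimum, each summand becomes $\E{U_{d+1}^kf\mid\f_{\infty\gr{1_{J\cap E''}}+(l+k)\gr{e_{d+1}}}}$ with $l\in\ens{0,-s\pr{E}}$, a quantity depending on $J$ only through $J\cap E''$. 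Grouping the sum over $J\subset E'$ by the value of $K=J\cap E''$ then factors out $\sum_{L\subset E'\setminus E''}\pr{-1}^{\abs{L}}$, which vanishes because $E''\subsetneq E'$ makes $E'\setminus E''$ nonempty.

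I expect the main obstacle to be the careful justification of the complete-commutativity collapse for the infinite $\sigma$-algebras $\f_{\infty\gr{1_\bullet}}$ and the accompanying computation of the coordinatewise minimum, which must be carried out keeping track of the two regimes $s\pr{E}=1$ (when $d+1\in E$) and $s\pr{E}=-1$ (when $d+1\notin E$). Once this collapsing identity is available the binomial cancellation is immediate, and the same computation covers the case $E=[d+1]$ with only notational changes.
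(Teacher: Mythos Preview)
Your proposal is correct and follows the same skeleton as the paper's proof: reduce to showing that each $U_{d+1}^kg$ lies in $\til{\h_{d,E\setminus\ens{d+1}}}$, identify $\til{\f_{\infty\gr{1_{E''}}}}$ with $\f_{\infty\gr{1_{E''\cup\ens{d+1}}}}$, start from the simplified expression \eqref{eq:expression_de_g_simplifiee}, and use complete commutativity to collapse the iterated conditional expectations so that the summand indexed by $J$ depends only on $J\cap E''$.

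The only genuine difference is in the last step. The paper, after reaching the expression indexed by $J\cap I$, argues that each individual summand $\E{f\mid\f_{\infty\gr{1_{J\cap I}}}}$ and $\E{f\mid\f_{\infty\gr{1_{J\cap I}}-s\pr{E}\gr{e_{d+1}}}}$ vanishes, using directly that $f\in\h_{d+1,E}$ and $J\cap I\subsetneq E$ (together with the tower property or the $\f_{\infty\gr{1_E}}$-measurability of $f$, depending on the sign of $s\pr{E}$). You instead keep the summands and kill the whole sum by the binomial identity $\sum_{L\subset E'\setminus E''}\pr{-1}^{\abs{L}}=0$. Your route is slightly more uniform, since it avoids the case split on whether $d+1\in E$; the paper's route makes the role of the hypothesis $f\in\h_{d+1,E}$ more transparent at the final step. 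Either way the computation you flag as the ``main obstacle'' (the commutativity collapse for the $\f_{\infty\gr{1_\bullet}}$ algebras) is exactly what the paper also invokes, and your description of it is accurate.
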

 
 \begin{proof}
 It suffices to prove that for any non-negative integer $k$, the 
 function $U_{d+1}^kg$ belongs to 
  $\til{\h_{d, E\setminus \ens{d+1}}}$. In view of 
  \eqref{eq:expression_de_g_simplifiee} and complete commutativity of the 
  filtration $\pr{T^{-\gr{i}}\f_{\gr{0}}}_{\gr{i}\in \Z^d}$, for 
  any $I\subsetneq E\setminus \ens{d+1}$, the following 
  equality holds:
  \begin{equation*}
   \E{U_{d+1}^kg\mid \f_{\infty\gr{1_{I\cup\ens{d+1}}}}}
   =U_{d+1}^k\sum_{J\subset E\setminus \ens{d+1}}\pr{-1}^{\abs{J}+\abs{E}} \pr{
    \E{   f\mid \f_{\infty \gr{1_{J\cap I}}}}  -
 \E{   f\mid \f_{\infty \gr{1_{J\cap I}}-s\pr{E}\gr{e_{d+1}} }  }}.
  \end{equation*}
 Suppose that $d+1\in E$. In this case, $s\pr{E}=1$ hence 
 $\f_{\infty \gr{1_{J\cap I}}-s\pr{E}\gr{e_{d+1}} } $ 
 and $\f_{\infty \gr{1_{J\cap I}}}$ are  
 contained in $\f_{\infty \gr{1_{E}}   }$ hence 
 $\E{U_{d+1}^kg\mid \f_{\infty\gr{1_{I\cup\ens{d+1}}}}}=0$. 
 If $d+1\notin E$, then $I\cup\ens{d+1}\subsetneq E$ hence 
 we also have $\E{U_{d+1}^kg\mid \f_{\infty\gr{1_{I\cup\ens{d+1}}}}}=0$.
 
 That $U_{d+1}^kg$ is $\f_{\infty\gr{1_{E\cup\ens{d+1}}}}$-measurable 
 when $E\neq [d+1]$ follows from \eqref{eq:expression_de_g_simplifiee}. 
 This ends the proof of Lemma~\ref{lem:inclusion_Hilbert_spaces}.
 \end{proof}
 By Lemma~\ref{lem:submartingale_property} and 
  Doob's inequality, we infer that 
  \begin{equation}
   (I)\leq 2\sum_{E\subset[d+1]}\norm{ 
   \max_{\gr{1_{[d]}}\imd \gr{i_{[d]}}\imd \gr{2^n_{[d]}}}
  \abs{S_{\gr{i_{[d]}} ,2^{m+1}}  \pr{g  }  }
   }
  \end{equation}
  We use item~\ref{it:dim_d} in the following setting: the $\sigma$-algebra 
  $\f_{\gr{0}}$ is replaced by $\til{\f_{\gr{0}}}=
  \sigma\pr{\bigcup_{l\in \Z}T_{d+1}^l\f_{\gr{0}}}$, the function 
  $f$ is replaced by $\widetilde{f}:=\sum_{j=0}^{2^{m+1}-1}U_{d+1}^jg$ (which
  belongs to  
  $\til{\h_{d, E\setminus \ens{d+1}}}$ by 
  Lemma~\ref{lem:inclusion_Hilbert_spaces}),
  and $\widetilde{\gr{n}}=(n_1,\dots,n_d)$:
  \begin{multline}\label{eq:control_of_I}
   (I)\leq 2\abs{\gr{2^{\til{\gr{n}}}}}^{1/2}K\pr{d}
   \norm{\sum_{j=0}^{2^{m+1}-1}U_{d+1}^jg}\\ +  
   \abs{\gr{2^{\til{\gr{n}}}}}^{1/2}
  \sum_{\emptyset\subsetneq J\subset [d]} 
  C\pr{d,J}\sum_{\gr{0_J}\imd \gr{i}\imd \gr{n_J}}
  \abs{\gr{2^i}}^{-1/2} \norm{ \sum_{\gr{1_J} \imd \gr{j}\imd \gr{\pr{2^i}_J}    } 
  \til{P_{d,E\setminus \ens{d+1}}}^{\gr{j}}\sum_{j=0}^{2^{m+1}-1}U_{d+1}^j
  g}.
  \end{multline}
  The sequence 
  $\pr{\sum_{j=0}^{N-1}U_{d+1}^j
   g}_{N\geq 1}$ is a martingale hence, by item~\ref{itm:contraction} of 
   Proposition~\ref{prop:Pq},
  \begin{equation}\label{eq:control_of_I_1_first_term}
  \norm{\sum_{j=0}^{2^{m+1}-1}U_{d+1}^j
  g}=2^{\frac{m+1}2} 
  \norm{g   } \leq  2 \cdot 2^{\frac{m+1}2}\norm{f}.
  \end{equation}
  We now bound the second term of \eqref{eq:control_of_I}. Let $J$ be a 
  non-empty subset of $[d]$, and let $\gr{i}$ and $\gr{j}$ be two elements of 
  $\N^d$ such that $\gr{0_J}\imd \gr{i}\imd \gr{n_J}$ and 
  and $\gr{1_J} \imd \gr{j}\imd \gr{\pr{2^i}_J}$. Since $\f_{\infty \gr{1_{F\cup\ens{d+1}}} 
  }$ is $T_{d+1}$-invariant,
  \begin{equation}
   \til{P_{d,E\setminus \ens{d+1}}}^{\gr{j}}\sum_{k=0}^{2^{m+1}-1}U_{d+1}^k
  g=\sum_{k=0}^{2^{m+1}-1}U_{d+1}^k\til{P_{d,E\setminus \ens{d+1}}}^{\gr{j}}
  g
  \end{equation}
Since the sequence 
$
\pr{\sum_{k=0}^{2^{m+1}-1}U_{d+1}^k\til{P_{d,E\setminus \ens{d+1}}}^{\gr{j}}
  g }_{N\geq 1}$
is a martingale, we derive that 
\begin{multline}\label{eq:control_of_I_1}
 \norm{ \sum_{\gr{1_J} \imd \gr{j}\imd \gr{\pr{2^i}_J}    } 
  \til{P_{d,E\setminus \ens{d+1}}}^{\gr{j}}\sum_{k=0}^{2^{m+1}-1}U_{d+1}^k
  g}\\
  =
  2^{\frac{m+1}2} 
  \norm{\sum_{F\subset  E\setminus \ens{d+1}    }\pr{-1}^{\abs{F}}
  \E{\sum_{\gr{1_J} \imd \gr{j}\imd \gr{\pr{2^i}_J}    }U^{\gr{j}\cdot \eps\pr{E}      }
  g\mid \f_{\infty \gr{1_{F\cup\ens{d+1}}}}}}.
\end{multline}

Assume that $d+1\notin E$. In this case, 
\begin{equation}
 g=f-U_{d+1}P_E^{\gr{e_{d+1}}}f=
 f-\sum_{I\subset E}\pr{-1}^{\abs{E}+\abs{I}} 
 \E{f\mid \f_{\infty\gr{1_I}-\gr{e_{d+1}}}},
\end{equation}
and since $f$ belongs to $\h_{d+1,E}$, we derive that 
$\E{f\mid \f_{\infty\gr{1_I}-\gr{e_{d+1}}}}=0$ if $I\subsetneq E$
hence 
\begin{equation}
 g=f-\E{f\mid \f_{\infty\gr{1_E}-\gr{e_{d+1}}}}.
\end{equation}
Consequently, using the $\f_{\infty\gr{1_E}}$-measurability of $f$, we derive that 
\begin{multline}
 \E{\sum_{\gr{1_J} \imd \gr{j}\imd \gr{\pr{2^i}_J}    }U^{\gr{j}\cdot \eps\pr{E}      }
  g\mid \f_{\infty \gr{1_{F\cup\ens{d+1}}}}}
  \\=\E{\sum_{\gr{1_J} \imd \gr{j}\imd \gr{\pr{2^i}_J}    }U^{\gr{j}\cdot \eps\pr{E}      }
  f\mid \f_{\infty \gr{1_{F }}}}
  -\E{\sum_{\gr{1_J} \imd \gr{j}\imd \gr{\pr{2^i}_J}    }U^{\gr{j}\cdot \eps\pr{E}      }
  f\mid \f_{\infty \gr{1_{F}}-\gr{e_{d+1}}}}
\end{multline}
and we get 
\begin{equation*}
  \norm{\sum_{F\subset  E\setminus \ens{d+1}    }\pr{-1}^{\abs{F}}
  \E{\sum_{\gr{1_J} \imd \gr{j}\imd \gr{\pr{2^i}_J}    }U^{\gr{j}\cdot \eps\pr{E}      }
  g\mid \f_{\infty \gr{1_{F\cup\ens{d+1}}}}}} \leq 
   2\norm{\sum_{\gr{1_J} \imd \gr{j}\imd \gr{\pr{2^i}_J}    } P_{d+1,E}^{\gr{j}} f }.
\end{equation*}
Assume now that $d+1$ belongs to $E$. In this case, 
\begin{equation}
 g=\E{f\mid \f_{\infty\gr{1_{E\setminus \ens{d+1}}}+\gr{e_{d+1}}   }}
\end{equation}
and we get 
\begin{equation}
 \sum_{F\subset  E\setminus \ens{d+1}    }\pr{-1}^{\abs{F}}
  \E{\sum_{\gr{1_J} \imd \gr{j}\imd \gr{\pr{2^i}_J}    }U^{\gr{j}\cdot \eps\pr{E}      }
  g\mid \f_{\infty \gr{1_{F\cup\ens{d+1}}}}} = \\
  \E{h
  \mid \f_{\infty\gr{1_{[d]}}}}
\end{equation}
where 

\begin{multline}
 h=\sum_{F\subset  E\setminus \ens{d+1}    }\pr{-1}^{\abs{F}}
  \E{\sum_{\gr{1_J} \imd \gr{j}\imd \gr{\pr{2^i}_J}    }U^{\gr{j}\cdot \eps\pr{E}      }
  f\mid \f_{\infty \gr{1_{F\cup\ens{d+1}}}}} \\
  -\sum_{F\subset  E\setminus \ens{d+1}    }\pr{-1}^{\abs{F}}
  \E{\sum_{\gr{1_J} \imd \gr{j}\imd \gr{\pr{2^i}_J}    }U^{\gr{j}\cdot \eps\pr{E}      }
  f\mid \f_{\infty \gr{1_{F }}}} 
\end{multline}
hence, in both cases,  
\begin{multline}\label{eq:control_of_I_2}
  \norm{\sum_{F\subset  E\setminus \ens{d+1}    }\pr{-1}^{\abs{F}}
  \E{\sum_{\gr{1_J} \imd \gr{j}\imd \gr{\pr{2^i}_J}    }U^{\gr{j}\cdot \eps\pr{E}      }
  g\mid \f_{\infty \gr{1_{F\cup\ens{d+1}}}}}} \\
  \leq 
   2\norm{\sum_{\gr{1_J} \imd \gr{j}\imd \gr{\pr{2^i}_J}    } P_{d+1,E}^{\gr{j}} f }.
\end{multline}
The combination of \eqref{eq:control_of_I}, \eqref{eq:control_of_I_1_first_term}, 
\eqref{eq:control_of_I_1} and \eqref{eq:control_of_I_2} yields 
\begin{multline}\label{eq:estimate_of_I_final}
   (I)\leq 4\abs{\gr{2^{\gr{n}}}}^{1/2}K\pr{d}
   \norm{f}\\ +4  
   \abs{\gr{2^{\gr{n}}}}^{1/2}
  \sum_{\emptyset\subsetneq J\subset [d]} 
  C\pr{d,J}\sum_{\gr{0_J}\imd \gr{i}\imd \gr{n_J}}
  \abs{\gr{2^i}}^{-1/2} 
  \norm{\sum_{\gr{1_J} \imd \gr{j}\imd \gr{\pr{2^i}_J} 
  } P_{d+1,E}^{\gr{j}} f }.
\end{multline}

Let us estimate the impact of $(II)$. Using inequality $\norm{\max_{i\in I}\abs{Y_i}}
\leq \sqrt{\abs I}\max_{i\in I}\norm{Y_i}$, we infer that 
\begin{equation*}
 (II)\leq 2^{m/2}
 \norm{\max_{\gr{1}\imd \gr{i}\imd \gr{2^{n-(m+1)e_{d+1}}}}
  \abs{S_{\gr{i}}\pr{U_{d+1}^{s\pr{E}} P_{d+1,E}^{\gr{e_{d+1}}}\pr{f}     }
  }}=2^{m/2}
 \norm{\max_{\gr{1}\imd \gr{i}\imd \gr{2^{n-(m+1)e_{d+1}}}}
  \abs{S_{\gr{i}}\pr{  P_{d+1,E}^{\gr{e_{d+1}}}\pr{f}     }
  }}.
\end{equation*}
By item~\ref{it:dim_d} applied to $P_{d+1,E}^{\gr{e_{d+1}}}\pr{f} $ 
instead of $f$ and $\til{\gr{n}}=(n_1,\dots,n_d)$, $\f_{\gr{0}}$ and 
$\h_{d,E}$ if 
$d+1\notin E$ (and $\f_{\infty\gr{1_{\ens{d+1}}}}$, $\til{h_{d,E\setminus 
\ens{d+1}}}$ defined in Proposition~\ref{prop:Pq}), the following inequality holds 
\begin{multline*}
 (II) \leq  2^{m/2}\abs{\gr{2^{\til{n}}}}^{1/2}K\pr{d}
 \norm{P_{d+1,E}^{\gr{e_{d+1}}}\pr{f}  }\\ + 2^{m/2}
 \abs{\gr{2^{\til{n}}}}^{1/2}
  \sum_{\emptyset\subsetneq J\subset [d]}
  C\pr{d,J}\sum_{\gr{0_J}\imd \gr{i}\imd \gr{n_J}}
  \abs{\gr{2^i}}^{-1/2} \norm{ \sum_{\gr{1_J} \imd \gr{j}\imd \gr{\pr{2^i}_J}    } 
  \til{P_{d,E\setminus \ens{d+1}}^{\gr{j}}}P_{d+1,E}^{\gr{e_{d+1}}}\pr{f}},
\end{multline*}
and using item~\ref{itm:commutativity_PU} of Proposition~\ref{prop:Pq}, it 
follows that 
\begin{multline}\label{eq:estimate_of_II_final}
 (II) \leq  2\cdot \abs{\gr{2^n}}^{1/2}K\pr{d}
 \norm{f  }\\ + \abs{\gr{2^n}}^{1/2}
  \sum_{\emptyset\subsetneq J\subset [d]}
  C\pr{d,J}\sum_{\gr{0_J}\imd \gr{i}\imd \gr{n_J}}
  \abs{\gr{2^i}}^{-1/2} \norm{ \sum_{\gr{1_J} \imd \gr{j}\imd \gr{\pr{2^i}_J}    } 
  P^{\gr{j}+\gr{e_{d+1}}} f}.
\end{multline}
We now bound $(III)$ using item~\ref{it:HDR_dim_d+1} in the following setting: we take 
$\til{T_i}=T_i$ for $i\in [d]$ and $\til{T_{d+1}}=T_{d+1}^2$, $\til{\f_{\gr{0}}}
=\f_{\gr{0}}$, $\til{f}=\pr{I+U_{d+1}}U_{d+1}^{s\pr{E}}P_{d+1,E}^{\gr{e_{d+1}}}\pr{f}$ 
and $\til{\gr{n}}=\gr{n}-\gr{e_{d+1}}$ in order to get 
\begin{multline*}
  (III)\leqslant 
  \abs{\gr{2^{\til{n}}}}^{1/2}K\pr{d+1}
  \norm{ \pr{\Id+U_{d+1}}U_{d+1}^{s\pr{E}}P_{d+1,E}^{\gr{e_{d+1}}}\pr{f}}\\ + 
  \abs{\gr{2^{\til{n}}}}^{1/2}
  \sum_{\emptyset\subsetneq J\subset [d+1]}
  C\pr{d+1,J}\sum_{\gr{0_J}\imd \gr{i}\imd \til{\gr{n_J}}}
  \abs{\gr{2^i}}^{-1/2}  \norm{ \sum_{\gr{1_J} \imd \gr{j}\imd \gr{\pr{2^i}_J}    } 
  \til{P}_{d+1,E}^{\gr{j}} U_{d+1}^{s\pr{E}}P_{d+1,E}^{\gr{e_{d+1}}}\pr{f}}.
\end{multline*}
We notice that if $d+1$ does not belong to $J$, then 
$\sum_{\gr{1_J} \imd \gr{j}\imd \gr{\pr{2^i}_J}}   \til{P}_{d+1,E}^{\gr{j}}=
\sum_{\gr{1_J} \imd \gr{j}\imd \gr{\pr{2^i}_J}}   P_{d+1,E}^{\gr{j}}$ 
and if $d+1$ belongs to $J$, then 
$\sum_{\gr{1_J} \imd \gr{j}\imd \gr{\pr{2^i}_J}}\til{P}^{\gr{j}}=
\sum_{\gr{1_J} \imd \gr{j}\imd \gr{\pr{2^i}_J}}
P_{d+1,E}^{\gr{j_{J\setminus \ens{d+1}}}+2j_{d+1}\gr{e_{d+1}}}
 $. By item~\ref{itm:commutativity_P_U_d} of Proposition~\ref{prop:Pq}, we derive that 
 \begin{multline}
  (III)\leq 2^{1/2}\abs{\gr{2^n}}^{1/2}
K\pr{d+1}
  \norm{P_{d+1,E}^{\gr{e_{d+1}}}\pr{f}}\\ + 
   2^{-1/2}\abs{\gr{2^{ n}}}^{1/2}
  \sum_{\emptyset\subsetneq J\subset [d]}
  C\pr{d+1,J}\sum_{\gr{0_J}\imd \gr{i}\imd \gr{n_J}}
  \abs{\gr{2^i}}^{-1/2}  \norm{ \sum_{\gr{1_J} \imd \gr{j}\imd \gr{\pr{2^i}_J}    } 
   P_{d+1,E}^{\gr{j}+ \gr{e_{d+1}}  }  f} +(III'),
  \end{multline}
where 
\begin{equation*}
 (III'):=2^{-1/2}\abs{\gr{2^{ n}}}^{1/2}\sum_{\emptyset\subsetneq J\subset [d]}
  C\pr{d+1,J\cup\ens{d+1}}\sum_{\gr{0_J}\imd \gr{i}\imd \gr{n_J}}\sum_{  i_{d+1}=0}
 ^{m}
  \abs{\gr{2^i}}^{-1/2}  \norm{ \sum_{\gr{1_J} \imd \gr{j}\imd \gr{\pr{2^i}_J}    }
  \sum_{j_{d+1}=1}^{2^{i_{d+1}+1}}
   P_{d+1,E}^{\gr{j}+ j_{d+1}\gr{e_{d+1}}  }  f},
\end{equation*}
hence, denoting $J':=J\cup\ens{d+1}$ for $\emptyset\subsetneq J\subset [d]$, 
and making the change of index $i'_{d+1}:=i_{d+1}+1$, 
\begin{multline}
 (III')\leq \abs{\gr{2^{ n}}}^{1/2}\sum_{\emptyset\subsetneq J\subset [d]}
  C\pr{d+1,J\cup\ens{d+1}}\sum_{\gr{0_{J'}}  \imd \gr{i}\imd 
  \gr{n_{J' }}} 
  \abs{\gr{2^i}}^{-1/2}  \norm{ \sum_{\gr{1_{J'}} \imd \gr{j}
  \imd \gr{\pr{2^i}_{J'}}    }
   P_{d+1,E}^{\gr{j}   }  f}\\
   -\abs{\gr{2^{ n}}}^{1/2}\sum_{\emptyset\subsetneq J\subset [d]}
  C\pr{d+1,J\cup\ens{d+1}}\sum_{\gr{0_J}\imd \gr{i}\imd \gr{n_J}} 
  \abs{\gr{2^i}}^{-1/2}  \norm{ \sum_{\gr{1_J} \imd \gr{j}\imd \gr{\pr{2^i}_J}    }
   P_{d+1,E}^{\gr{j}    }  f}.
\end{multline}
We derive the following inequality:
\begin{multline}\label{eq:estimate_of_III_final}
 (III)\leq 2^{1/2}\abs{\gr{2^n}}^{1/2}
K\pr{d+1}
  \norm{P_{d+1,E}^{\gr{e_{d+1}}}\pr{f}}\\ + 
   2^{-1/2}\abs{\gr{2^{ n}}}^{1/2}
  \sum_{\emptyset\subsetneq J\subset [d]}
  C\pr{d+1,J}\sum_{\gr{0_J}\imd \gr{i}\imd \gr{n_J}}
  \abs{\gr{2^i}}^{-1/2}  \norm{ \sum_{\gr{1_J} \imd \gr{j}\imd \gr{\pr{2^i}_J}    } 
   P_{d+1,E}^{\gr{j}+ \gr{e_{d+1}}  }  f} \\
   +\abs{\gr{2^{ n}}}^{1/2}\sum_{\emptyset\subsetneq J\subset [d]}
  C\pr{d+1,J\cup\ens{d+1}}\sum_{\gr{0_{J'}}  \imd \gr{i}\imd 
  \gr{n_{J' }}} 
  \abs{\gr{2^i}}^{-1/2}  \norm{ \sum_{\gr{1_{J'}} \imd \gr{j}
  \imd \gr{\pr{2^i}_{J'}}    }
   P_{d+1,E}^{\gr{j}   }  f}\\
   -\abs{\gr{2^{ n}}}^{1/2}\sum_{\emptyset\subsetneq J\subset [d]}
  C\pr{d+1,J\cup\ens{d+1}}\sum_{\gr{0_J}\imd \gr{i}\imd \gr{n_J}} 
  \abs{\gr{2^i}}^{-1/2}  \norm{ \sum_{\gr{1_J} \imd \gr{j}\imd \gr{\pr{2^i}_J}    }
   P_{d+1,E}^{\gr{j}+\gr{e_{d+1}}   }  f}.
\end{multline}
Combining \eqref{eq:decomposition_of_max}, \eqref{eq:estimate_of_I_final}, 
\eqref{eq:estimate_of_II_final} and \eqref{eq:estimate_of_III_final}, we derive that 
\begin{multline}
  \norm{\max_{\gr{1}\imd \gr{i}\imd \gr{2^n}}
  \abs{S_{\gr{i}}\pr{f}}}\leq 6\abs{\gr{2^{\gr{n}}}}^{1/2}K\pr{d}
   \norm{f}\\ +  
   \abs{\gr{2^{\gr{n}}}}^{1/2}
  \sum_{\emptyset\subsetneq J\subset [d]} 
   4C\pr{d,J} \sum_{\gr{0_J}\imd \gr{i}\imd \gr{n_J}}
  \abs{\gr{2^i}}^{-1/2} 
  \norm{\sum_{\gr{1_J} \imd \gr{j}\imd \gr{\pr{2^i}_J} 
  } P_{d+1,E}^{\gr{j}} f }\\
   + \abs{\gr{2^n}}^{1/2}
  \sum_{\emptyset\subsetneq J\subset [d]}
  \til{C\pr{J}}\sum_{\gr{0_J}\imd \gr{i}\imd \gr{n_J}}
  \abs{\gr{2^i}}^{-1/2} \norm{ \sum_{\gr{1_J} \imd \gr{j}\imd \gr{\pr{2^i}_J}    } 
  P_{d+1,E}^{\gr{j}+\gr{e_{d+1}}} f}\\
  +2^{1/2}\abs{\gr{2^n}}^{1/2}
K\pr{d+1}
  \norm{P_{d+1,E}^{\gr{e_{d+1}}}\pr{f}}\\ 
  +\abs{\gr{2^{ n}}}^{1/2}\sum_{\emptyset\subsetneq J\subset [d]}
  C\pr{d+1,J\cup\ens{d+1}}\sum_{\gr{0_{J'}}  \imd \gr{i}\imd 
  \gr{n_{J' }}} 
  \abs{\gr{2^i}}^{-1/2}  \norm{ \sum_{\gr{1_{J'}} \imd \gr{j}
  \imd \gr{\pr{2^i}_{J'}}    }
   P_{d+1,E}^{\gr{j}   }  f},
\end{multline}
where $\til{C\pr{J}}:=C\pr{d,J}+2^{-1/2}C\pr{d+1,J}- C\pr{d+1,J\cup\ens{d+1}}$
and using \eqref{eq:recurrence_Kd}, \eqref{eq:recurrence_Cd+1_J}, 
\eqref{eq:recurrence_Cd+1_J_bis} and \eqref{eq:recurrence_singleton_d+1}, we obtain 
 \eqref{eq:maximal_inequality_dim_d+1} for $n_{d+1}=m+1$. This proves the 
 first inequality in \eqref{eq:inegalite_maximale_enonce}. The second 
 one follows from a multidimensional extension of Lemma~2.7 in \cite{MR2123210}.
 \end{proof} 

\begin{proof}[Proof of Theorem~\ref{thm:approximation_sous_Maxwell_Woodroofe}]
 Using Theorem~\ref{thm:CNS_orthomartingale_approximation}, 
 we shall only check that \eqref{eq:easier_sufficient_cond} holds.
 
 Let $\emptyset \subsetneq J\subset [d]$ and $E\subset [d]$.
 An application of Proposition~\ref{prop:inegalite_maximale} reduces 
 the proof to 
\begin{equation}\label{eq:convergence_ank}
 \lim_{k\to +\infty}\frac 1{k^{\abs J}}
  \sum_{\gr{n}\smd \gr{1}} \abs{\gr{n}}^{-3/2}
   \norm{\sum_{\gr{0}\imd \gr{i}\imd\gr{n}-\gr{1}}
   P_E^{\gr{i}}\pr{\sum_{\gr{1_J}\imd \gr{j}\imd k\gr{1_J}}
 P_E^{\gr{j}}f}
  }= 0
\end{equation}

If $q$ belongs to $J$, then 
\begin{equation}
 \frac 1{k^{\abs{J}}}\norm{\sum_{\gr{0}\imd \gr{i}\imd\gr{n}-\gr{1}}
 \sum_{\gr{1_J}\imd \gr{j} \imd k\gr{1_J}   
  }
  P_E^{\gr{i}+\gr{j}}f
  }
  \leq \frac 1k\norm{\sum_{\gr{0}\imd \gr{i}\imd\gr{n}-\gr{1}}\sum_{l=1  
  }^k
  P_E^{\gr{i}+l\gr{e_q}}f
  }
\end{equation}
hence it suffices to prove that for any $q\in [d]$, 
\begin{equation}
  \lim_{k\to +\infty}\frac 1{k}
  \sum_{\gr{n}\smd \gr{0}} a_{\gr{n},k}= 0
\end{equation}
where 
\begin{equation*}
a_{\gr{n},k}:= k^{-1}\abs{\gr{n}}^{-3/2} 
   \norm{\sum_{\gr{0}\imd \gr{i}\imd\gr{n}-\gr{1}}\sum_{l=1
  }^k
  P_E^{\gr{i}+l\gr{e_q}}f
  } .
\end{equation*}
We first observe that for any fixed $\gr{n}\smd\gr{1}$, 
$a_{\gr{n},k}\leq k^{-1}
\norm{ \sum_{l=1
  }^k
  P_E^{l\gr{e_q}}\pr{f}
  } $
  and applying Lemma~2.8. of  \cite{MR2123210} to the subadditive sequence 
  $\left( \norm{ \sum_{l=1
  }^k
  P_E^{l\gr{e_q}}\pr{f}
  } \right)_{k\geq 1}$, we derive that $a_{\gr{n},k}\to 0$ as $k$ goes to infinity. 
  Moreover, 
  \begin{equation}
   \sup_{k\geqslant 1}a_{\gr{n},k} 
  \leqslant  \abs{\gr{n} }^{-3/2}
   \norm{\sum_{\gr{0}\imd \gr{i}\imd\gr{n}-\gr{1}} 
  P_E^{\gr{i} }\pr{f}},
  \end{equation}
 hence by dominated convergence, \eqref{eq:convergence_ank} holds.
 This ends the proof of 
 Theorem~\ref{thm:approximation_sous_Maxwell_Woodroofe}.
\end{proof}

\begin{proof}[Proof of Corollary~\ref{cor:linear_processes}]
The computation of $P_E^{\gr{i}}f$ gives 
\begin{equation}
 P_E^{\gr{i}}f= \sum_{\gr{j}\in \N^d,\gr{j}\cdot \gr{1_E} \smd \gr{1_E} 
  }a_{\pr{\gr{i}+\gr{j}}\cdot\eps\pr{E}}\cdot \eps_{\gr{j}}.
\end{equation}
Summing over $\gr{i}\in [\gr{0},\gr{n}]$, taking the $\mathbb L^2$-norm 
and using orthogonality of $\eps_{\gr{j}}$'s, we derive that 
$\norm{\sum_{\gr{0}\imd\gr{i}\imd\gr{n}-\gr{1}}P_E^{\gr{i}}f}
\leq \Delta_{E,\gr{n}}$ hence \eqref{eq:linear_processes_MW} implies 
\eqref{eq:MW_dim_d}. The approximating martingale satisfies the 
invariance principle since $T^{\gr{e_1}}$ is ergodic.
\end{proof}

\newcommand{\etalchar}[1]{$^{#1}$}
\def\polhk\#1{\setbox0=\hbox{\#1}{{\o}oalign{\hidewidth
  \lower1.5ex\hbox{`}\hidewidth\crcr\unhbox0}}}\def\cprime{$'$}
  \def\cprime{$'$} \def\rasp{\leavevmode\raise.45ex\hbox{$\rhook$}}
  \def\cftil#1{\ifmmode\setbox7\hbox{$\accent"5E#1$}\else
  \setbox7\hbox{\accent"5E#1}\penalty 10000\relax\fi\raise 1\ht7
  \hbox{\lower1.15ex\hbox to 1\wd7{\hss\accent"7E\hss}}\penalty 10000
  \hskip-1\wd7\penalty 10000\box7}
  \def\polhk#1{\setbox0=\hbox{#1}{\ooalign{\hidewidth
  \lower1.5ex\hbox{`}\hidewidth\crcr\unhbox0}}} \def\cprime{$'$}
\providecommand{\bysame}{\leavevmode\hbox to3em{\hrulefill}\thinspace}
\providecommand{\MR}{\relax\ifhmode\unskip\space\fi MR }
\providecommand{\MRhref}[2]{%
  \href{http://www.ams.org/mathscinet-getitem?mr=#1}{#2}
}
\providecommand{\href}[2]{#2}


\begin{thebibliography}{CCD{\etalchar{+}}14}

\bibitem[BD79]{MR542479}
A.~K. Basu and C.~C.~Y. Dorea, \emph{On functional central limit theorem for
  stationary martingale random fields}, Acta Math. Acad. Sci. Hungar.
  \textbf{33} (1979), no.~3-4, 307--316. \MR{542479 (80k:60037)}

\bibitem[Bro58]{MR0096975}
Felix~E. Browder, \emph{On the iteration of transformations in noncompact
  minimal dynamical systems}, Proc. Amer. Math. Soc. \textbf{9} (1958),
  773--780. \MR{0096975}

\bibitem[Cai69]{MR0254912}
Renzo Cairoli, \emph{Un th\'eor\`eme de convergence pour martingales \`a
  indices multiples}, C. R. Acad. Sci. Paris S\'er. A-B \textbf{269} (1969),
  A587--A589. \MR{0254912 (40 \#8119)}

\bibitem[CCD{\etalchar{+}}14]{MR3178617}
Jean-Ren{\'e} Chazottes, Christophe Cuny, J{\'e}r{\^o}me Dedecker, Xiequan Fan,
  and Sarah Lemler, \emph{Limit theorems and inequalities via martingale
  methods}, Journ\'ees {MAS} 2012, ESAIM Proc., vol.~44, EDP Sci., Les Ulis,
  2014, pp.~177--196. \MR{3178617}

\bibitem[CDV15]{MR3504508}
Ch. Cuny, J.~Dedecker, and D.~Voln\'y, \emph{A functional {CLT} for fields of
  commuting transformations via martingale approximation}, Zap. Nauchn. Sem.
  S.-Peterburg. Otdel. Mat. Inst. Steklov. (POMI) \textbf{441} (2015),
  no.~Veroyatnost$\prime$\ i Statistika. 22, 239--262. \MR{3504508}

\bibitem[CM14]{MR3178473}
Christophe Cuny and Florence Merlev{\`e}de, \emph{On martingale approximations
  and the quenched weak invariance principle}, Ann. Probab. \textbf{42} (2014),
  no.~2, 760--793. \MR{3178473}

\bibitem[{Cun}14]{2014arXiv1403.0772C}
C.~{Cuny}, \emph{{Limit theorems under the Maxwell-Woodroofe condition in
  Banach spaces}}, ArXiv e-prints (2014).

\bibitem[CV13]{MR3083921}
Christophe Cuny and Dalibor Voln{\'y}, \emph{A quenched invariance principle
  for stationary processes}, ALEA Lat. Am. J. Probab. Math. Stat. \textbf{10}
  (2013), no.~1, 107--115. \MR{3083921}

\bibitem[Ded01]{MR1875665}
J{\'e}r{\^o}me Dedecker, \emph{Exponential inequalities and functional central
  limit theorems for a random fields}, ESAIM Probab. Statist. \textbf{5}
  (2001), 77--104. \MR{1875665 (2003a:60054)}

\bibitem[DM03]{MR2019054}
J{\'e}r{\^o}me Dedecker and Florence Merlev{\`e}de, \emph{The conditional
  central limit theorem in {H}ilbert spaces}, Stochastic Process. Appl.
  \textbf{108} (2003), no.~2, 229--262. \MR{2019054}

\bibitem[DMV07]{MR2359065}
J{\'e}r{\^o}me Dedecker, Florence Merlev{\`e}de, and Dalibor Voln{\'y},
  \emph{On the weak invariance principle for non-adapted sequences under
  projective criteria}, J. Theoret. Probab. \textbf{20} (2007), no.~4,
  971--1004. \MR{2359065}

\bibitem[Don51]{MR0040613}
Monroe~D. Donsker, \emph{An invariance principle for certain probability limit
  theorems}, Mem. Amer. Math. Soc., \textbf{1951} (1951), no.~6, 12.
  \MR{0040613 (12,723a)}

\bibitem[Dur09]{MR2475604}
Olivier Durieu, \emph{Independence of four projective criteria for the weak
  invariance principle}, ALEA Lat. Am. J. Probab. Math. Stat. \textbf{5}
  (2009), 21--26. \MR{2475604 (2010c:60109)}

\bibitem[DV08]{MR2446326}
Olivier Durieu and Dalibor Voln{\'y}, \emph{{Comparison between criteria
  leading to the weak invariance principle}}, Ann. Inst. Henri Poincar{\'e}
  Probab. Stat. \textbf{44} (2008), no.~2, 324--340. \MR{2446326 (2010a:60075)}

\bibitem[EMG16]{MR3522451}
Mohamed El~Machkouri and Davide Giraudo, \emph{Orthomartingale-coboundary
  decomposition for stationary random fields}, Stoch. Dyn. \textbf{16} (2016),
  no.~5, 1650017, 28. \MR{3522451}

\bibitem[Gir17]{1510.01459}
Davide Giraudo, \emph{Hölderian weak invariance principle under {M}axwell and
  {W}oodroofe condition}, to appear in Brazilian Journal of Probability and
  Statistics (2017).

\bibitem[Gor69]{MR0251785}
M.~I. Gordin, \emph{The central limit theorem for stationary processes}, Dokl.
  Akad. Nauk SSSR \textbf{188} (1969), 739--741. \MR{0251785 (40 \#5012)}

\bibitem[Gor09]{MR2749126}
\bysame, \emph{{Martingale-co-boundary representation for a class of stationary
  random fields}}, Zap. Nauchn. Sem. S.-Peterburg. Otdel. Mat. Inst. Steklov.
  (POMI) \textbf{364} (2009), no.~Veroyatnost i Statistika. 14.2, 88--108, 236.
  \MR{2749126 (2012f:60168)}

\bibitem[GP11]{MR2797997}
Mikhail Gordin and Magda Peligrad, \emph{On the functional central limit
  theorem via martingale approximation}, Bernoulli \textbf{17} (2011), no.~1,
  424--440. \MR{2797997 (2012c:60096)}

\bibitem[Han73]{MR0331683}
E.~J. Hannan, \emph{Central limit theorems for time series regression}, Z.
  Wahrscheinlichkeitstheorie und Verw. Gebiete \textbf{26} (1973), 157--170.
  \MR{0331683 (48 \#10015)}

\bibitem[Han79]{MR562049}
\bysame, \emph{The central limit theorem for time series regression},
  Stochastic Process. Appl. \textbf{9} (1979), no.~3, 281--289. \MR{562049
  (82e:60038)}

\bibitem[Hey74]{MR0372955}
C.~C. Heyde, \emph{On the central limit theorem for stationary processes}, Z.
  Wahrscheinlichkietstheorie und Verw. Gebiete \textbf{30} (1974), 315--320.
  \MR{0372955 (51 \#9159)}

\bibitem[MW00]{MR1782272}
Michael Maxwell and Michael Woodroofe, \emph{Central limit theorems for
  additive functionals of {M}arkov chains}, Ann. Probab. \textbf{28} (2000),
  no.~2, 713--724. \MR{1782272}

\bibitem[NP92]{MR1162153}
B.~S. Nahapetian and A.~N. Petrosian, \emph{Martingale-difference {G}ibbs
  random fields and central limit theorem}, Ann. Acad. Sci. Fenn. Ser. A I
  Math. \textbf{17} (1992), no.~1, 105--110. \MR{1162153 (93d:60090)}

\bibitem[PR98]{MR1629903}
S.~Poghosyan and S.~R{\oe}lly, \emph{{Invariance principle for
  martingale-difference random fields}}, Statist. Probab. Lett. \textbf{38}
  (1998), no.~3, 235--245. \MR{1629903 (99e:60094)}

\bibitem[PU05]{MR2123210}
Magda Peligrad and Sergey Utev, \emph{A new maximal inequality and invariance
  principle for stationary sequences}, Ann. Probab. \textbf{33} (2005), no.~2,
  798--815. \MR{2123210 (2005m:60047)}

\bibitem[PUW07]{MR2255301}
Magda Peligrad, Sergey Utev, and Wei~Biao Wu, \emph{A maximal {$\mathbb
  L_p$}-inequality for stationary sequences and its applications}, Proc. Amer.
  Math. Soc. \textbf{135} (2007), no.~2, 541--550 (electronic). \MR{2255301
  (2007m:60047)}

\bibitem[PZ17]{2017arXiv170201143P}
M.~{Peligrad} and N.~{Zhang}, \emph{{On the normal approximation for random
  fields via martingale methods}}, ArXiv e-prints (2017).

\bibitem[QV12]{MR2914436}
Herv{\'e} Queff{\'e}lec and Dalibor Voln{\'y}, \emph{{On martingale
  approximation of adapted processes}}, J. Theoret. Probab. \textbf{25} (2012),
  no.~2, 438--449. \MR{2914436}

\bibitem[Vol93]{MR1198662}
Dalibor Voln{\'y}, \emph{{Approximating martingales and the central limit
  theorem for strictly stationary processes}}, Stochastic Process. Appl.
  \textbf{44} (1993), no.~1, 41--74. \MR{1198662 (93m:28021)}

\bibitem[Vol06]{MR2283254}
\bysame, \emph{{Martingale approximation of non adapted stochastic processes
  with nonlinear growth of variance}}, {Dependence in probability and
  statistics}, {Lecture Notes in Statist.}, vol. 187, Springer, New York, 2006,
  pp.~141--156. \MR{2283254 (2008b:60070)}

\bibitem[Vol07]{MR2344817}
\bysame, \emph{{A nonadapted version of the invariance principle of {P}eligrad
  and {U}tev}}, C. R. Math. Acad. Sci. Paris \textbf{345} (2007), no.~3,
  167--169. \MR{2344817 (2008k:60078)}

\bibitem[Vol10]{MR2679961}
\bysame, \emph{{Martingale approximation and optimality of some conditions for
  the central limit theorem}}, J. Theoret. Probab. \textbf{23} (2010), no.~3,
  888--903. \MR{2679961 (2011k:60122)}

\bibitem[Vol15]{MR3427925}
\bysame, \emph{A central limit theorem for fields of martingale differences},
  C. R. Math. Acad. Sci. Paris \textbf{353} (2015), no.~12, 1159--1163.
  \MR{3427925}

\bibitem[Vol17]{Volny2017}
Dalibor Voln\'y, \emph{Martingale-coboundary decomposition for strictly
  stationary random fields}, accepted for publication (2017).

\bibitem[VW14]{MR3264437}
Dalibor Voln{\'y} and Yizao Wang, \emph{An invariance principle for stationary
  random fields under {H}annan's condition}, Stochastic Process. Appl.
  \textbf{124} (2014), no.~12, 4012--4029. \MR{3264437}

\bibitem[Wic69]{MR0246359}
Michael~J. Wichura, \emph{Inequalities with applications to the weak
  convergence of random processes with multi-dimensional time parameters}, Ann.
  Math. Statist. \textbf{40} (1969), 681--687. \MR{0246359 (39 \#7663)}

\bibitem[WW04]{MR2060314}
Wei~Biao Wu and Michael Woodroofe, \emph{Martingale approximations for sums of
  stationary processes}, Ann. Probab. \textbf{32} (2004), no.~2, 1674--1690.
  \MR{2060314}

\bibitem[WW13]{MR3222815}
Yizao Wang and Michael Woodroofe, \emph{A new condition for the invariance
  principle for stationary random fields}, Statist. Sinica \textbf{23} (2013),
  no.~4, 1673--1696. \MR{3222815}

\bibitem[ZW08a]{MR2370600}
Ou~Zhao and Michael Woodroofe, \emph{Law of the iterated logarithm for
  stationary processes}, Ann. Probab. \textbf{36} (2008), no.~1, 127--142.
  \MR{2370600 (2009b:60104)}

\bibitem[ZW08b]{MR2462550}
\bysame, \emph{On martingale approximations}, Ann. Appl. Probab. \textbf{18}
  (2008), no.~5, 1831--1847. \MR{2462550}

\end{thebibliography}
\end{document}